\newcommand\R{\mathbb{R}}
\newcommand\Z{\mathbb{Z}}
\newcommand\N{\mathbb{N}}
\newcommand\mass{{\bf M}}
\newcommand\calH{\mathcal{H}}
\newcommand\calM{\mathcal{M}}
\newcommand\calR{\mathcal{R}}
\newcommand\Ext{\mathcal{E}}
\newcommand\calP{\mathcal{P}}
\newcommand\Snmu{\mathcal{S}^{n-1}}
\newcommand\Suno{\mathcal{S}^{1}}
\newcommand\MBCC{{\mathcal M}_{\rm df}^{(m)}}
\newcommand\MBCCOmega[1][\Omega]{{\mathcal M}_{\rm df}^{(m)}(#1)}
\newcommand{\weakstarto}{{\ensuremath{\overset{\ast}{\rightharpoonup}}}\,}
\newcommand\e{\varepsilon}
\newcommand\eps{\varepsilon}
\newcommand\supp{\mathop{\mathrm{supp\,}}}
\newcommand\dist{{\mathrm{dist}}}
\newcommand\sgn{{\mathrm{sgn}}}
\newcommand\Lip{{\mathrm{Lip}}}
\newcommand\Id{{\mathrm{Id}}}
\newcommand\psie{\psi_\mathrm{e}}
\newcommand\cc{c}
\newcommand\diam{\mathop{\mathrm{diam}}}
\newcommand{\LM}[1]{\hbox{\vrule width.2pt \vbox to#1pt{\vfill \hrule width#1pt height.2pt}}}
\newcommand{\LL}{{\mathchoice{\,\LM7\,}{\,\LM7\,}{\,\LM5\,}{\,\LM{3.35}\,}}}
\newcommand\Tan{\mathrm{Tan}}
\def\nabla{D}
\newtheorem{theorem}{Theorem}[section]
\newtheorem{lemma}[theorem]{Lemma}
\newtheorem{corollary}[theorem]{Corollary}
\newtheorem{proposition}[theorem]{Proposition}
\numberwithin{equation}{section}
\newcounter{Nummer}
\newenvironment{steplist}{
\setcounter{Nummer}{0}
\begin{list}{{\em Step \arabic{Nummer}:}}
{\setlength\leftmargin{0pt}
\setlength\labelwidth{0cm}
\setlength\itemindent{.2cm}
\usecounter{Nummer}} 
}{
\end{list}
}
\begin{document}
\begin{center}
{ \LARGE
Modeling of dislocations and relaxation of functionals on 1-currents with
discrete multiplicity
 \\[5mm]}
{September 22, 2014}\\[5mm]
Sergio Conti$^{1}$, Adriana Garroni$^{2}$, and 	
Annalisa Massaccesi$^3$\\[2mm]
{\em $^1$ Institut f\"ur Angewandte Mathematik,
Universit\"at Bonn\\ 53115 Bonn, Germany }\\
{\em $^{2}$ Dipartimento di Matematica, Sapienza, Universit\`a di Roma\\
00185 Roma, Italy}\\
[1mm]
{\em $^{3}$ Dipartimento di Matematica ``Federigo Enriques'',\\
 Universit\`a
  degli Studi di Milano, 
20133 Milano, Italy}
\\[3mm]
\begin{minipage}[c]{0.8\textwidth}\small
Abstract: In the modeling of dislocations one is lead naturally to energies concentrated
on lines, where the integrand depends on the orientation and on the Burgers
vector of the dislocation, which belongs to a discrete lattice.
The dislocations may be identified with divergence-free matrix-valued measures
supported on curves or with 1-currents with multiplicity in a lattice. In
this paper we develop the theory of relaxation for these energies and provide
one physically motivated example in which the relaxation for some Burgers
vectors is nontrivial and can be determined explicitly. From a
technical viewpoint the key ingredients are an approximation and a structure
theorem for 1-currents with multiplicity in a lattice.
\end{minipage}
\end{center}

\section{Introduction}
Dislocations are topological  singularities in crystals, which 
may be described by lines to which a lattice-valued vector, called Burgers
vector, is associated. They may be identified  with divergence-free
matrix-valued measures 
supported on curves or equivalently with 1-currents with multiplicity in a lattice and without boundary. 
The energetic modeling of dislocations leads naturally to energies
with linear growth concentrated 
on lines, where the integrand depends on the orientation and on the Burgers
vector of the dislocation.
The energy of a dislocation supported on a line $\gamma$, with tangent vector
$\tau:\gamma\to \Snmu $ and multiplicity $\theta:\gamma\to\Z^m$ takes the form

\begin{equation}\label{eq1}
\int_{\gamma}\psi(\theta,\tau) \,d\calH^1\,,
\end{equation}
restricted to the set of dislocation density tensors $\mu=\theta\otimes \tau
\calH^1\LL\gamma$ which are 
divergence-free, see for example \cite{HirthLothe1968,HullBacon}. In the
two-dimensional 
case such divergence-free measures can be identified with gradients of
characteristic functions in $BV$ and the problem can be treated as a
vector-valued partition problem \cite{AB1,AB2}; for a derivation of a
line-tension energy of the type (\ref{eq1}) from a Peierls-Nabarro model with
linear elasticity  see \cite{Gar_Muel,Con_Gar_Muel}. The analysis in the
three-dimensional case is substantially more subtle.
A formulation of dislocations in terms of currents was considered also in
\cite{ScalaVanGoethem2013}. 

The aim of this paper is to study the lower semicontinuity and relaxation of
functionals of the type (\ref{eq1}). One important question is whether
sequences of measures with the given properties and bounded energy converge,
upon taking a subsequence and in a suitable weak sense, to a measure in the
same class. Without the divergence-free constraint this is, in general, not
true. This can be solved by rephrasing the problem in terms of 1-rectifiable
currents. The same tool is also helpful for proving density results and a
structure theorem. However, the standard theory of currents deals with the
scalar case \cite{federer,GiaquintaModicaSoucek1998I}, whereas for
dislocations lattice-valued currents are needed. 
Some statements, such as
compactness, can be directly generalized from the scalar case working
componentwise, this is however not always the case, as for example in the
density result one must make sure that all components are approximated using
the same polyhedral (or piecewise affine) curve. Therefore we revisit in
Section \ref{seccurrents} some
of the classical proofs showing how they can be extended to the case of
interest here.

Very general results for group-valued currents are available, but not all
 cases which are relevant for us are covered. 
The theory of group-valued currents was firstly developed by Fleming
\cite{fleming}. He considers so-called polyhedral chains with
coefficients in a suitable abelian normed group $G$ and then works in its
closure, with respect to the flat norm. Essential results such as compactness
and approximability were proved by White in \cite{white1,white2}. 
 The approach we chose is quite different, relying on an explicit integral representation of group-valued $1$-currents, matching with \eqref{eq1} (see \cite{An_Ma} for a similar point of view). 
In Section \ref{seccurrents} we rephrase our problem in terms of 1-currents,
and we prove the polygonal approximation, density and structure theorems. 
In the rest of the paper, for notational simplicity, we use mostly the 
language of measures. 

The relaxation of the functional (\ref{eq1}) turns out to be an integral
functional of the same form but with a different integrand, see  Section 
\ref{secrelax}. 
As in the case of the relaxation of partition problems 
 \cite{AB1,AB2} the integrand  in the relaxed functional, that we call the
 $\calH^1$-elliptic envelope, is obtained by a cell
 formula, given in (\ref{relax}) below. 
In Lemma \ref{cellproblem} below we derive 
algebraic upper and lower bounds for the relaxation.
We remark that in general the two bounds do not coincide, as was proven in the
 two-dimensional case in \cite{AB2}, see also \cite{Caraballo2009}.
 For a specific  problem of physical interest, namely, dislocations in a cubic
 crystal, we  give in Section \ref{secexample}  an algebraic lower bound and
 an explicit expression for the  
 $\calH^1$-elliptic envelope in the case of  small Burgers vector.
An application of the tools derived here to the study of dislocations in a
three-dimensional discrete model of crystals, which has partly motivated the
present work,  will be discussed separately \cite{CoGaOrt}.

\section{Preliminary results on $\Z^m$-valued $1$-currents}
\label{seccurrents}
\subsection{Definitions and notation}
A $1$-current $T$ is a functional on the space of smooth compactly supported $1$-forms (vector fields in $\R^n$).
 We focus here on {\em rectifiable currents}, which  are still
 a satisfying generalization of curves (or surfaces, in dimension greater than
 $1$), but they are sufficiently regular to admit a handy representation as 
\begin{equation}\label{rect_curr_gr2}
\langle T,\varphi\rangle=\int_\gamma\theta(x)\langle\varphi(x);\tau(x)\rangle\,d{\cal H}^{1}(x)\,\in\R^m\,,\quad\forall\,\varphi\in C_c^\infty(\Omega,\R^n)\,,
\end{equation}
 where $\Omega\subseteq\R^n$ is open, $\gamma\subset\Omega$ is a
 $1$-rectifiable set  and $\tau:\gamma\to\Snmu$ is its 
 tangent vector, ${\cal H}^{1}$-almost everywhere. The multiplicity is an
 $L^1$ map 
\[
\theta:\gamma\to\Z^m\,.
\]
Let us point out that, setting $m=1$, we would recover the standard theory of
rectifiable currents  \cite{federer,Krantz_Parks, Mor1}; but,
for our aims, we need an actual lattice $\Z^m\subset\R^m$. Nevertheless, a
significant part of the theory of $\Z^m$-valued currents can be done
componentwise, reducing to the classical theory. 
Notice that the results stated and proved in this section for $\Z^m$-valued 
rectifiable $1$-currents can be actually given in the more general context of 
currents with multiplicity in a lattice $\mathcal{L}$, i.e., a discrete subgroup of 
$\R^m$ spanning the whole of $\R^m$. Since we never use the specific Euclidean 
norm of $\Z^m$, the two formulations are completely equivalent, for notational 
simplicity we focus on $\Z^m$.

 We will denote by $\calR_1(\Omega,\Z^m)$ the set of rectifiable $1$-currents and we will take \eqref{rect_curr_gr2} as a definition.
 Roughly speaking, one can imagine a rectifiable current as a countable sum of
 oriented simple Lipschitz curves with $\Z^m$-multiplicities (see Thm. 4.2.25
 in \cite{federer} and its corollaries) and we will establish this remark
 precisely in Theorem \ref{thm:structure}. If the map $\theta$ is piecewise
 constant on the support of $T$, say $\theta_{|\gamma_i}\equiv
 \theta_i\in\Z^m$ with $\supp T=\bigcup_i\gamma_i$
and $\gamma_i$ the image of a function
 $\tilde\gamma_i\in \Lip([0,1];\R^n)$, then for every $\varphi\in
 C^\infty_c(\Omega,\R^n)$ 
\begin{equation}\label{pwc_curr}
\langle T,\varphi\rangle=\sum_i
\theta_i\int_{\gamma_i}\langle\varphi;\tau\rangle\,d{\cal H}^1=
\sum_i\theta_i\int_0^1\varphi(\tilde\gamma_i(s))\tilde\gamma'_i(s)\,ds \,.
\end{equation}
The total variation of the rectifiable current in (\ref{rect_curr_gr2}) is the
measure $\|T\|=|\theta|\calH^1\LL \gamma$, its mass is 
\[
{\bf M}(T)=\|T\|(\Omega)=\int_\gamma |\theta|\,d{\cal H}^1,
\]
and it gives the ``weighted length'' of the current $T$ with respect to the
Euclidean norm $|\cdot|$ on $\Z^m$.  Indeed, in the piecewise constant
multiplicities case \eqref{pwc_curr} the mass of $T$ is the sum 
\begin{equation}\label{mass_easy}
{\bf M}(T)=\sum_i |\theta_i|{\cal H}^1(\gamma_i)\,.
\end{equation}
Since we use the Euclidean norm on $\Z^m$, the mass of a vectorial current is
not, in general, the sum of the masses of the components. Using a different norm on $\Z^m$ would lead
to an equivalent norm on $\calR_1$.

Consistently with Stokes' Theorem, the boundary of a $1$-current $T$ is the $0$-current
\begin{equation*}
\langle\partial T,\psi\rangle=\langle T,{\rm d}\psi\rangle\quad\forall\,\psi\in C^\infty_c(\Omega)\,. 
\end{equation*}
A current $T$ is closed if $\partial T=0$. If $T$ is closed, then
\begin{equation}\label{eqdefpartialT}
\int_\gamma \theta(x)  D_\tau\psi(x) d\calH^1(x)=
0\quad\forall\,\psi\in 
W^{1,\infty}_0(\Omega)
\end{equation}
where $\gamma$, $\theta$ and $\tau$ are as in (\ref{rect_curr_gr2}) and
$D_\tau\psi(x)$ is the tangential derivative of $\psi$ at $x$ along $\gamma$.
The integral is well defined
since the Lipschitz function $\psi$ has a Lipschitz trace on the rectifiable 
set $\gamma$, and therefore a tangential derivative
$\calH^1$-almost everywhere on $\gamma$.
Formally, and in analogy to (\ref{rect_curr_gr2}), we can write (\ref{eqdefpartialT}) as
$\langle T,{\rm d}\psi\rangle=0$ (the two expressions are indeed identical if $\psi\in C^1_c$). 
To prove (\ref{eqdefpartialT}) let 
$\psi_\eps\in C^\infty_c(\Omega)$ be such that $\|D\psi_\eps\|_\infty\le 2\|D\psi\|_\infty$
and
$\|\psi-\psi_\eps\|_\infty\le\eps$. We claim that $D_\tau\psi_\eps(x)$ converges
weakly-$*$ in $L^\infty(\gamma,\calH^1)$ to $D_\tau\psi(x)$. Indeed, 
$D_\tau\psi_\eps(x)$ is uniformly bounded and therefore has a subsequence which
converges weakly-$*$ to some $g\in L^\infty(\gamma)$. For every $C^1$ curve
$\gamma_j$, the restriction to $\gamma_j$ of $\psi_\eps$ converges uniformly, and
hence weakly-* in $W^{1,\infty}(\gamma_j)$, to the restriction of $\psi$.
Therefore $g=D_\tau \psi$, $\calH^1$-almost everywhere on $\gamma$.
Using  $\theta\in L^1(\gamma,\calH^1)$,
$D_{\tau}\psi_\eps(x)=\langle{\rm d}\psi_\eps(x), \tau(x)\rangle$ 
and $\langle T,{\rm d}\psi_\eps\rangle=0$,
it follows that
\begin{equation*}
 \int_\gamma \theta(x)  D_\tau\psi(x) d\calH^1(x)
=\lim_{\eps\to0}\int_\gamma \theta(x) \langle{\rm d}\psi_\eps(x), \tau(x)\rangle d\calH^1(x)=0\,.
\end{equation*}
 This proves (\ref{eqdefpartialT}).

If the multiplicity $\theta$ is piecewise constant as in (\ref{pwc_curr}), then
\[
\langle\partial
T,\psi\rangle=\sum_i\theta_i\left(\psi(\tilde\gamma_i(1))-\psi(\tilde\gamma_i(0))\right)\quad\forall\,\psi\in
C^\infty_c(\Omega)\,. 
\]

We say that a rectifiable $1$-current is polyhedral  if its support $\gamma$ is the union of finitely many 
segments and $\theta$ is constant on each of them. We denote by $\calP_1(\Omega;\Z^m)$ the set of polyhedral $1$-currents. 


For a bi-Lipschitz map $f:\R^n\to\R^n$,
 $f_\sharp T$ is the current
\begin{equation}\label{eqactionfcurrent}
\langle f_\sharp T,\varphi\rangle = \int_{f(\gamma)} \theta(f^{-1}(y))
\langle
\varphi(y), \tau'(y)\rangle d\calH^1(y)\,,
\end{equation}
where $\tau'$ is the tangent to $f(\gamma)$ with the same orientation as $\tau$, $\tau'(f(x))=D_{\tau} f(x)/|\nabla_{\tau} f(x)|$. As above, 
$D_{\tau}f(x)$ denotes the tangential derivative of $f$ along $\gamma$, which exists $\calH^1$-almost everywhere on $\gamma$ since $f$ is Lipschitz
on $\gamma$;
if $f$ is differentiable in $x$ then $D_{\tau} f(x)=Df(x)\tau(x)$.

Alternatively, one can interpret rectifiable $1$-currents as measures. 
We say that a measure $\mu\in \calM(\Omega;\R^{m\times n})$ is divergence free if 
$$
\int_\Omega \sum_{j=1}^nD\varphi_j \,d\mu_{ij}=0\qquad \forall\varphi\in C^\infty_c(\Omega,\R^n), \,\,i=1, \dots, m
$$
which we shorten to $\partial\mu=0$.
We denote
by $\MBCC(\Omega)$ the set of  divergence-free measures 
$\mu\in\calM(\Omega;\R^{m\times n})$ of the form 
$$
\mu=\theta\otimes\tau \calH^1\LL \gamma,
$$
where $\gamma$ is a $1$-rectifiable set contained in $\Omega$, $\tau:\gamma\to 
\Snmu$ its tangent vector, and $\theta:\gamma\to \Z^m$ is $\calH^1$-integrable. 
Such a measure is divergence-free if and only if the corresponding current 
defined by (\ref{rect_curr_gr2}) is closed.
We identify closed currents in $\calR^1(\Omega;\Z^m)$  with measures in $\MBCC(\Omega)$. With this
identification the total variation of $\mu$ coincides with the mass of $T$,
$\mass(T)=|\mu|(\Omega)$. 

\subsection{Density}\label{subsec:density}

Our first result is an extension of the density theorem, as given in the
scalar case for example in \cite[Theorem 4.2.20]{federer},
 to vector-valued currents. 
We formulate the density result on $\R^n$, a local version can be easily
deduced using the extension lemma discussed below. Although we find it more
natural to  phrase and prove the
theorem in terms of 1-currents, the entire argument can be easily formulated
in terms of measures supported on curves, with only notational changes.
\begin{theorem}[Density]\label{theo:density}
Fix $\e>0$ and consider a $\Z^m$-valued closed $1$-current
$T\in\calR_1(\R^n,\Z^m)$. Then there exist a bijective map
$f\in C^1(\R^n;\R^n)$, with inverse also $C^1$, and a closed polyhedral $1$-current
$P\in\calP_1(\R^n,\Z^m)$ such that 
\begin{equation*}
\mass(f_\sharp T-P)\le\e
\end{equation*}
and
\begin{equation*}
|\nabla f(x)-\Id|+|f(x)-x|\le\e\quad\forall\,x\in\R^n\ .
\end{equation*}
Moreover, $f(x)=x$ whenever $\dist(x,\supp T)\ge\e$. 
\end{theorem}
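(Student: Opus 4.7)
My plan is to isolate, via Lusin's theorem, a large compact ``good'' subset $K$ of the carrier $\gamma$ of $T$ where $\tau$ is continuous and $\theta$ is locally constant, to straighten $K$ by a $C^1$-diffeomorphism $f$ supported in an $\e$-neighborhood of $\supp T$, and then to close off the resulting polyhedral piece by adding short segments inside the small ``bad'' region, exploiting that $T$ is closed. A preliminary cut-off in a large ball reduces to $\mass(T)<\infty$ with $\supp T$ compact. I then apply Lusin's theorem to $\tau$ and to $\theta$ (regarded as an $L^1$ $\Z^m$-valued map, whose level sets at large $|\theta|$ have small $\calH^1$-measure) together with the $C^1$-rectifiability of $\gamma$, to obtain a compact $K\subset\gamma$ contained in a finite union of disjoint $C^1$ arcs, with $\tau|_K$ continuous, $\theta|_K$ taking only finitely many values $v_1,\dots,v_N\in\Z^m$, and $\int_{\gamma\setminus K}|\theta|\,d\calH^1<\eta$, where $\eta$ is a small parameter to be chosen in terms of $\e$.

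I then cover $K$ by finitely many small open cylinders $C_i$, each of radius $<\e$ and each contained in the $\e$-neighborhood of $\supp T$, such that $K\cap C_i$ is a Lipschitz graph of slope $<\e$ over a segment $\Sigma_i$ in direction $\tau_i$, and $\theta$ is constant on $K\cap C_i$. On each $C_i$ I construct a $C^1$ shear map equal to the identity near $\partial C_i$ and carrying $K\cap C_i$ onto $\Sigma_i$, with $|\nabla f_i-\Id|<C\e$; a partition-of-unity gluing yields a global $C^1$ map $f:\R^n\to\R^n$ equal to the identity outside $\bigcup_i C_i$ and satisfying $|\nabla f-\Id|+|f-\Id|<\e$, hence a $C^1$-diffeomorphism with $C^1$ inverse by the inverse function theorem. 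By construction $f_\sharp(T\LL K)$ is a polyhedral current $P_{\rm good}\in\calP_1(\R^n,\Z^m)$ supported on $\bigcup_i\Sigma_i$ with constant integer multiplicity on each $\Sigma_i$.

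To close $P_{\rm good}$ into an element of $\calP_1$, I use that $\partial P_{\rm good}=-\partial\bigl(f_\sharp(T\LL(\gamma\setminus K))\bigr)$, since $f_\sharp T$ is closed. Thus the endpoints of the segments $\Sigma_i$ are grouped into pairs, each pair corresponding to the two sides of a connected component of $f(\gamma\setminus K)$ and carrying opposite $\Z^m$-weights. I define $P_{\rm bad}$ by joining each such pair of endpoints by the straight chord with that weight; then $P:=P_{\rm good}+P_{\rm bad}$ is a closed polyhedral current, and the mass of each added chord is at most the length of the bad arc it replaces times the corresponding $|v_j|$, giving $\mass(P_{\rm bad})\le\mass(T\LL(\gamma\setminus K))\le\eta$. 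Together with the bound $\mass(f_\sharp(T\LL(\gamma\setminus K)))\le(1+C\e)\eta$, choosing $\eta$ small enough yields $\mass(f_\sharp T-P)\le\e$.

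The main obstacle is the joint handling of all $m$ components by one diffeomorphism and one polyhedral support, as emphasized in the Introduction: one cannot simply apply the scalar density theorem componentwise. This forces the cover to be adapted to the common carrier $\gamma$, and the closure step to produce chords of the correct $\Z^m$-multiplicity on each bad arc. In turn, the latter requires arranging, by further inclusion of the jumps of $\theta$ into the ``bad'' set, that $\theta$ be essentially constant on each connected component of $\gamma\setminus K$; doing this while keeping both $\mass(T\LL(\gamma\setminus K))$ and the number of bad arcs controlled is the source of most of the technical work.
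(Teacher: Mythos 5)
Your overall strategy (straighten a large ``good'' part of $\gamma$ by a $C^1$ map close to the identity, then close the resulting polyhedral chain using that $T$ has no boundary) is the same as the paper's. There are, however, two genuine gaps.

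First, the gluing. You cover $K$ by possibly overlapping cylinders $C_i$ and glue the local shears $f_i$ with a partition of unity. On an overlap $C_i\cap C_j$ the glued map is a pointwise convex combination of $f_i$ and $f_j$ and has no reason to carry $K\cap C_i\cap C_j$ onto a segment, so $f_\sharp(T\LL K)$ need not be polyhedral. The paper avoids this entirely by taking a \emph{disjoint} Morse cover by cubes centered at $\|T\|$-density points (Federer 4.3.17) and composing the local maps, so each acts on a separate region and the straightened image is genuinely polyhedral.

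Second, and more seriously, the closure step. You assert that the boundary Diracs of $P_{\rm good}$ come in pairs, ``each pair corresponding to the two sides of a connected component of $f(\gamma\setminus K)$,'' joined by a short chord of the right $\Z^m$-weight. But $\gamma$ is only $1$-rectifiable and $\theta$ is only $L^1(\calH^1\LL\gamma;\Z^m)$: $\gamma\setminus K$ is in general not a countable union of arcs with well-defined endpoints (much of it lies outside the finitely many $C^1$ arcs containing $K$), $\theta$ has no ``jump set'' to push into the bad set since it is not $BV$, and the relation $\partial P_{\rm good}=-\partial(f_\sharp(T\LL(\gamma\setminus K)))$ only says the finite signed $\Z^m$-valued atomic measure sums to zero, not that its atoms pair up with opposite weights along components of $\gamma\setminus K$. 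Even in favorable cases several $\Sigma_i$ can share an endpoint, so the atom there is a sum, not a single $\pm v_j$. The mass estimate $\mass(P_{\rm bad})\le\mass(T\LL(\gamma\setminus K))$ thus rests on a decomposition that is unavailable. The paper's remedy is to apply Federer's Deformation Theorem (4.2.9) componentwise to the small current $f_\sharp T-P^I$: this produces polyhedral pieces $P^O,Q$ with explicit mass bounds in terms of $\mass(f_\sharp T-P^I)$ and $\tilde\eps\,\mass(\partial P^I)$, giving a closed $P=P^I+P^O+Q$ without any pairing argument. If you want to keep your Lusin-based setup, you should replace your chord construction by an appeal to the deformation theorem at the last step, and you should make the straightening cover disjoint.
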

It is here important that a current $T$ without boundary  can be approximated
by polyhedral currents without boundary. The proof cannot be done 
componentwise, since this would increase the total mass by a factor (depending
on $m$), but follows closely the strategy used for currents with integer
mul\-ti\-pli\-ci\-ty \cite{federer}. For the sake of simplicity, we will prove the density result in the case of interest for this paper ($1$-dimensional currents without boundary), but the same proof can be performed for $\Z^m$-valued currents of generic dimension $k$.

\begin{proof}
By standard arguments on rectifiable sets, there is  a
countable family $\mathcal{F}$ of $C^1$ curves
such that $\|T\|(\Omega\setminus \cup\mathcal{F})=0$. We denote by
$\lambda$ a real parameter in the interval $(0,1)$, which will be chosen at
the end of the proof.

\begin{steplist}
  \item
We fix a point $x_0\in\gamma\in{\cal F}$
and assume   that, for some $\theta_0\in\Z^m\setminus\{0\}$,
\begin{equation}\label{eqdensts}
\lim_{r\to0}\frac{\|T-S\|(Q^{\tau}_r(x_0))}{r}=0\,,  
\end{equation}
where $S$ is the current defined by
$\langle S,\varphi\rangle = \int_{\gamma} \theta_0 \langle
\varphi(x),\tau(x)\rangle
d\calH^1(x)$, and $Q^{\tau}_r(x_0)$ is the cube of side $2r$, center in $x_0$
and one side parallel to the vector $\tau$, which is the tangent to $\gamma$
in $x_0$.

Without loss of generality we can assume $x_0=0$ and $\Tan_0\gamma=\R e_1$,
where ${e}_1$ is the first vector of the 
canonical  basis of $\R^n$. We denote by $Q_r$ the cube of center $0$, side $2r$ and sides parallel to the coordinate directions. 
Let $\e'>0$ be a small parameter chosen later.
For $r$ sufficiently small the set $\gamma\cap
Q_r$ is the graph of a $C^1$ function $g:(-r,r)\to\R^{n-1}$
with $g(0)=0$ and $\|g\|_{C^1}<\e'$. The function $\tilde g:(-r,r)\to\R^n$ 
defined as $\tilde g(x_1)=(0,g(x_1))$ obeys
\begin{equation*}
\|D\tilde g\|_{L^\infty((-r,r))}<\e' \hskip2mm\text{ and }\hskip2mm
\|\tilde g\|_{L^\infty((-r,r))}<\e' r\ .
\end{equation*}
We define the function $f\in C^1(\R^n;\R^n)$ as
$$
f(x)=x-\psi(x)\tilde g(x_1)\,,
$$
where $\psi\in C^\infty_c(Q_r;[0,1])$ obeys   $\psi\equiv 1$ on $Q_{\lambda
  r}$ and
\begin{equation*}
\|\nabla\psi\|_{L^\infty}\le\frac{2}{(1-\lambda)r}\ .
\end{equation*}
For $2 \varepsilon'<1-\lambda$ the function $f$ is 
  bi-Lipschitz and maps $\gamma\cap Q_{\lambda r}$ into the segment
$(\R e_1)\cap Q_{\lambda r}$.
Moreover for sufficiently small $\e'$ (on a scale set by $\lambda$ and $\e$)
one has 
\begin{eqnarray}
|f(x)-x|+|\nabla f(x)-\Id|&\le&|\psi(x)\tilde g(x_1)|+|\psi(x)\nabla \tilde g(x_1)\otimes{e}_1|\nonumber\\
                          & + &|\tilde g(x_1)\otimes \nabla\psi(x)|\nonumber\\
													&
                                                                                                        <
                                                                                                        &\e'\left(r+1+\frac{2}{(1-\lambda)}\right)
                                                                                                        <\e \label{volevamo} 
\end{eqnarray}
and
\begin{equation}\label{lip_const_inv}
\|f^{-1}\|_{C^1}\le 1+\e\ .
\end{equation}
\begin{figure}[t]
\begin{center}
 \includegraphics{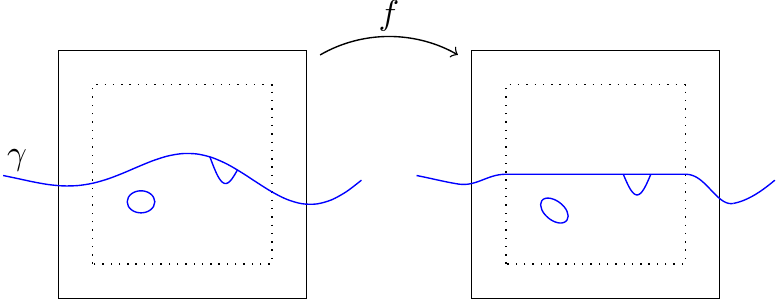}
\end{center}
\caption{The action of $f$ on $T$ in the proof of Theorem
  \ref{theo:density}. The inner cube is $Q_{\lambda r}$, the   outer one $Q_r$}
\label{fig:fs} 
\end{figure}
\item
We let $P$ be the polyhedral current defined by
\begin{equation*}
\langle P,\varphi\rangle = \theta_0 \int_{(-\lambda r,\lambda r)e_1}  \langle
\varphi,e_1\rangle\ d\calH^1\,.
\end{equation*}
With  $S$ as  in (\ref{eqdensts}), by definition of $P$ and $f$ we have \begin{equation*}
\mass(S\LL Q_r-f_\sharp^{-1}P)  =
|\theta_0|\, \calH^1\left(\gamma\cap (Q_r \setminus Q_{\lambda r})\right)\,.
\end{equation*}
Since $\gamma$ is a $C^1$ curve, 
\begin{alignat*}1
  \lim_{r\to0} \frac{\calH^1(\gamma\cap (Q_r \setminus Q_{\lambda r}))}{2r}
&=  (1-\lambda)\,.
\end{alignat*}
Using a triangle inequality and (\ref{volevamo}) we obtain
\begin{alignat*}1
\mass\left(f_\sharp\left(T\LL  Q_r\right)-P\right)&\le
\mass
\left(f_\sharp((T-S)\LL Q_{ r})\right)
+ \mass\left(f_\sharp(S\LL Q_{r}- f_\sharp^{-1}P) \right)\\
&\le(1+\eps)\mass
\left((T-S)\LL Q_{ r}\right)
+(1+\eps) \mass\left(S\LL Q_{r}- f_\sharp^{-1}P \right)
\end{alignat*}
and, recalling (\ref{eqdensts}), 
\begin{alignat*}1
\limsup_{r\to0}\frac{\mass\left(f_\sharp\left(T\LL  Q_r\right)-P\right)}{2r}
&\le(1+\eps)(1-\lambda)|\theta_0|\,.
\end{alignat*}
Since, again by (\ref{eqdensts}), $\|T\|\left(Q_r\right)/(2r)\to|\theta_0|$, 
 for $r$ sufficiently small
\begin{alignat}1\label{est_1point_compl}
\mass\left(f_\sharp\left(T\LL  Q_r\right)-P\right)
&<2(1-\lambda )\|T\|\left(Q_r\right)\ .
\end{alignat}

\item By \cite[Th. 4.3.17]{federer} for $\calH^1$-almost every point in the
union of the curves in $\mathcal{F}$ there is a $\theta_0$ with the property (\ref{eqdensts}), and 
therefore an $r_x\in(0,\eps/\sqrt{n})$ satisfying the property (\ref{est_1point_compl}) with $Q_r$ replaced by $Q_{r_x}^{\tau(x)}(x)$.
Using Morse's covering Theorem, we cover
$\|T\|$-almost all the set $\cup\mathcal{F}$ with a countable family of disjoint cubes
$Q_{r_k}^{\tau_k}(x_k)$ with $\tau_k=\tau(x_k)$ and sides $2r_k$, with $r_k<r_{x_k}$.
Then we have a  polyhedral $1$-current $P_k$ with support in
$Q_{r_k}^{\tau_k}(x_k)$ and a bi-Lipschitz map  $f_k\in C^1(\R^n;\R^n)$  satisfying
\eqref{volevamo}, \eqref{lip_const_inv} and \eqref{est_1point_compl}.

We choose a finite subfamily such that
\begin{equation}\label{est_covering_arg}
\sum_{k=1}^{K(\lambda)}\|T\|(Q_{r_k}^{\tau_k}(x_k))\ge \lambda \mass(T)
\end{equation}
and define
$$
f=f_1\circ\ldots\circ f_{K(\lambda )}\ .
$$ 
Since  $f_k(x)=x$ outside  $Q_{r_k}^{\tau_k}(x_k)$ for all $k$ and the cubes are
disjoint the condition  \eqref{volevamo} still holds
and $f(x)=x$ outside an $\eps$-neighbourhood of $\supp T$.

We define the polyhedral current
$$
P^I=\sum_{k=1}^{K(\lambda )} P_k\,,
$$
write
$$
f_\sharp T-P^I=\sum_{k=1}^{K(\lambda)} \left(f_\sharp\left(T\LL Q_{r_k}^{\tau_k}(x_k)\right)-P_k\right)+ f_\sharp \Big(T\LL\bigcup_{k>K(\lambda)}^\infty Q_{r_k}^{\tau_k}(x_k)\Big)
$$
and, recalling \eqref{est_1point_compl} and
\eqref{est_covering_arg},  conclude that 
\begin{equation}\label{est_kpoints_mass}
\mass(f_\sharp T-P^I)<2(1-\lambda)\mass(T)+(1-\lambda)\mass(T)=
3(1-\lambda)\mass(T)\ . 
\end{equation}

\item
We finally modify the polyhedral current $P^I$ to make it closed.

The current $f_\sharp T-P^I$ has multiplicity in $\Z^m$
and hence it can be decomposed
 in $m$ rectifiable scalar $1$-currents. Since $\partial f_\sharp T=0$ and $\partial
 P^I$ is a polyhedral current with finite mass (a finite sum of Diracs,
 actually) we can apply
 the Deformation Theorem in 
 \cite[Th. 4.2.9]{federer} to each component of $f_\sharp T-P^I$ in order
 to represent  it as
$$
f_\sharp T-P^I=P^O+Q+\partial S\ .
$$
Here $P^O,Q\in\calP_1(\R^n,\Z^m)$ are polyhedral $1$-currents satisfying 
\[
\mass(P^O)\le \sqrt{m}\, \cc_O(\mass(f_\sharp T-P^I)+\tilde\eps\mass(\partial P^I))
\] 
and
\[
\mass(Q)\le \tilde{\e}\sqrt{m}\, \cc_Q\mass\left(\partial P^I\right),
\]
for some $\tilde\varepsilon$ arbitrarily small, where
 $\cc_O,\cc_Q>0$ are geometric constants.
The current $Q$ is polyhedral by \cite[Th. 4.2.9(8)]{federer}.
since $\partial (f_\sharp T-P^I)$ is polyhedral. 

Then $P=P^I+P^O+Q$ is a closed polyhedral $1$-current with
\begin{eqnarray*}
\mass(f_\sharp T-P)&\le&\mass(f_\sharp T-P^I)+\mass(P-P^I)\\
                   &\le& 3(1-\lambda)\mass(T)+\mass(P^O+Q)\\
                   &\le& 3(1+\sqrt{m}\, \cc_O)(1-\lambda)\mass(T) + \tilde\e
                   \sqrt{m}\,(\cc_O+\cc_Q) \mass\left(\partial P^I\right).
\end{eqnarray*}
We first choose   a $\lambda\in (0,1)$ such that
the first term is less than $\frac12 \eps$, then $\tilde\e$ such that the
second term is also less than $\frac12\eps$, and conclude.
\end{steplist}
\end{proof}

 As a consequence of  Theorem~\ref{theo:density} we easily prove that any closed current  $T\in \calR_1(\R^n;\Z^m)$ can be approximated by sequences of polyhedral currents $P_k$ in the weak topology for currents, where
$$
 P_k\weakstarto T \quad \Longleftrightarrow\quad \langle P_k,\varphi\rangle\buildrel{k\to+\infty}\over{\longrightarrow} \langle T,\varphi\rangle\qquad \forall \ \varphi\in C^\infty_c(\R^n;\R^n)\,.
$$
We recall that  the currents $P_k$  are supported on a finite number of
segments. 
\begin{corollary}\label{cor:density}
  For every $T\in \calR_1(\R^n;\Z^m)$ with $\partial T=0$ there is a
  sequence of polyhedral currents $P_k\in \calP_1(\R^n;\Z^m)$ with $\partial P_k=0$ such
  that
\begin{equation*}
  P_k\weakstarto T \text{ and }\  \mass(P_k)\to \mass(T)\,.
\end{equation*}
\end{corollary}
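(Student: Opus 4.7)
The plan is to apply Theorem~\ref{theo:density} directly with $\eps=1/k$. For each $k$ we obtain a bi-Lipschitz map $f_k\in C^1(\R^n;\R^n)$ with $|f_k(x)-x|+|Df_k(x)-\Id|\le 1/k$ for all $x$, and a closed polyhedral $1$-current $P_k\in\calP_1(\R^n;\Z^m)$ such that $\mass((f_k)_\sharp T-P_k)\le 1/k$. I propose to take these $P_k$ as the approximating sequence. Note that $P_k$ is not itself a push-forward of a polyhedral current under $f_k^{-1}$, and there is no need for it to be so: weak-$*$ convergence is tested only against smooth compactly supported forms, not against deformations.

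For the weak-$*$ convergence I would split, for $\varphi\in C_c^\infty(\R^n;\R^n)$,
\begin{equation*}
\langle P_k,\varphi\rangle = \langle (f_k)_\sharp T,\varphi\rangle + \langle P_k-(f_k)_\sharp T,\varphi\rangle.
\end{equation*}
The second term is bounded by $\|\varphi\|_\infty\mass(P_k-(f_k)_\sharp T)\le\|\varphi\|_\infty/k\to 0$. For the first term I would use the definition \eqref{eqactionfcurrent} to write
\begin{equation*}
\langle (f_k)_\sharp T,\varphi\rangle = \int_\gamma \theta(x)\,\langle\varphi(f_k(x)),D_\tau f_k(x)\rangle\,d\calH^1(x),
\end{equation*}
and since $D_\tau f_k\to\tau$ and $\varphi\circ f_k\to\varphi$ uniformly, an application of the dominated convergence theorem (with dominating function $2|\theta|\,\|\varphi\|_\infty\in L^1(\gamma,\calH^1)$) gives $\langle (f_k)_\sharp T,\varphi\rangle\to\langle T,\varphi\rangle$.

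For the mass convergence I would use the area formula, which yields
\begin{equation*}
\mass((f_k)_\sharp T)=\int_\gamma |\theta(x)|\,|D_\tau f_k(x)|\,d\calH^1(x).
\end{equation*}
Since $|D_\tau f_k(x)|\to 1$ pointwise and is uniformly bounded, dominated convergence gives $\mass((f_k)_\sharp T)\to\mass(T)$. Combining with the triangle inequality
\begin{equation*}
|\mass(P_k)-\mass((f_k)_\sharp T)|\le\mass(P_k-(f_k)_\sharp T)\le 1/k
\end{equation*}
yields $\mass(P_k)\to\mass(T)$. Finally, closedness of $P_k$ is already part of the conclusion of Theorem~\ref{theo:density}, so nothing further needs to be checked. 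The only mildly delicate point is the verification that pull-back by $f_k$ of a test form converges to the form itself in a strong enough sense to pass to the limit under the $L^1(\gamma)$-integral against $\theta$; this is immediate from $\|f_k-\Id\|_{C^1}\le 1/k$ combined with the dominated convergence theorem.
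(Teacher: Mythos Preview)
Your proof is correct and follows exactly the approach the paper intends: the corollary is stated immediately after Theorem~\ref{theo:density} as an easy consequence, with no explicit proof given, and your argument---applying the theorem with $\eps=1/k$, then checking weak-$*$ convergence and mass convergence via dominated convergence and the triangle inequality---is precisely the natural way to fill in the details.
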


We conclude this section  with an extension lemma, that can be found in
various forms in the literature. We sketch here the argument for the case of
interest, in which the closedness is preserved.
\begin{lemma}[Extension]\label{lemma:extension}
  Let $\Omega\subset\R^n$ be a bounded Lipschitz open set. For every closed rectifiable
  1-current defined in $\Omega$, $T\in \calR_1(\Omega;\Z^m)$, there is a closed rectifiable 1-current
  $\Ext T\in \calR_1(\R^n;\Z^m)$ with $\Ext T\LL\Omega=T$ and $\mass(\Ext T)\le \cc 
  \mass(T)$. The constant depends only on $\Omega$. Further,
$\lim_{\delta\to0} \mass(\Ext T\LL (\Omega_\delta\setminus\Omega))=0$, 
where $\Omega_\delta=\{x: \dist(x, \Omega)<\delta\}$.
\end{lemma}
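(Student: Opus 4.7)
The approach is to extend $T$ by reflecting it across $\partial\Omega$ in a tubular neighborhood, and then kill any residual boundary by a Deformation Theorem correction of the kind already used in Step~4 of the proof of Theorem~\ref{theo:density}.

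Using the Lipschitz structure of $\partial\Omega$, I first construct a bi-Lipschitz collar $\Psi:\partial\Omega\times(-\eta,\eta)\to W$, with $\Psi(x,0)=x$, $\Psi(\partial\Omega\times(0,\eta))=W\cap\Omega$, and bi-Lipschitz constants depending only on $\Omega$ (this is done by covering $\partial\Omega$ with finitely many boundary charts and gluing local graph collars). Setting $r(\Psi(x,t)):=\Psi(x,-t)$ defines a bi-Lipschitz map $r:W\to W$ with $r|_{\partial\Omega}=\mathrm{Id}$, $r^2=\mathrm{Id}$, $r(W\cap\Omega)=W\setminus\overline\Omega$, and constants depending only on $\Omega$. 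Applying the coarea formula to $\dist(\cdot,\partial\Omega)$ on a range of collar widths, one may moreover choose $W$ so that the slice of the trivial extension $\tilde T$ of $T$ by zero to $\R^n$ across $\partial W$ is well-defined and satisfies $\mass(\langle\tilde T,\partial W\rangle)\le c\mass(T)$.

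With this in hand, define the preliminary extension
\[
\Ext_0 T := \tilde T - r_\sharp(\tilde T\LL W).
\]
Then $\Ext_0 T\LL\Omega=T$ and $\mass(\Ext_0 T)\le(1+c)\mass(T)$ by the standard pushforward mass bound for bi-Lipschitz maps. Since $T$ is closed in $\Omega$, $\partial\tilde T$ is a $0$-current supported on $\partial\Omega\subset W$, so $(\partial\tilde T)\LL W=\partial\tilde T$, and the identity $r_\sharp\partial\tilde T=\partial\tilde T$ follows because $r$ fixes $\partial\Omega$ pointwise. Combining this with the slicing formula for $\partial(\tilde T\LL W)$ gives $\partial\Ext_0 T=\pm r_\sharp\langle\tilde T,\partial W\rangle$, a $0$-current of finite mass supported in $r(\partial W\cap\Omega)\subset W\setminus\overline\Omega$, at positive distance from $\partial\Omega$. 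This residual boundary is killed, componentwise in $\Z^m$, by the Deformation Theorem exactly as in Step~4 of the proof of Theorem~\ref{theo:density}: one obtains a polyhedral $Q\in\calP_1(\R^n;\Z^m)$ supported outside $\Omega$, with $\partial Q=\mp r_\sharp\langle\tilde T,\partial W\rangle$ and $\mass(Q)\le c'\mass(\langle\tilde T,\partial W\rangle)\le c''\mass(T)$. Setting $\Ext T:=\Ext_0 T+Q$ yields a closed rectifiable current with $\Ext T\LL\Omega=T$ and $\mass(\Ext T)\le c\mass(T)$.

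The decay property is immediate from the construction: $\Ext T\LL(\Omega_\delta\setminus\Omega)$ is concentrated in $r(W\cap\Omega)\cup\supp Q$, with $\supp Q$ at positive distance from $\partial\Omega$ and $r^{-1}(\Omega_\delta\setminus\Omega)\cap\Omega$ shrinking to $\partial\Omega$ as $\delta\to 0$; since $T$ does not charge $\partial\Omega$, dominated convergence on the pushforward mass yields $\mass(\Ext T\LL(\Omega_\delta\setminus\Omega))\to 0$. The main technical obstacle is Step~1: constructing the bi-Lipschitz reflection $r$ with uniform constants depending only on $\Omega$, and selecting $W$ so that the slice $\langle\tilde T,\partial W\rangle$ has controlled mass. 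Once these are in place, closedness and the mass bound follow from standard pushforward, slicing, and deformation arguments already employed in the proof of Theorem~\ref{theo:density}.
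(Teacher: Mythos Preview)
Your approach is essentially the same as the paper's: construct a bi-Lipschitz reflection across $\partial\Omega$ in a collar, push the collar part of $T$ to the outside, then close up a residual finite-mass $0$-current boundary supported strictly outside $\Omega$. The paper builds the collar via $g(x,t)=x+tN(x)$ with a $C^1$ transversal field $N$, defines $\tilde T=T-f_\sharp(T\LL(D_\rho\cap\Omega))$, shows directly via test functions that $\tilde T$ is closed in $\Omega\cup D_\rho$, and then slices with $\dist(\cdot,\partial\Omega)$ on the \emph{outer} half of the collar to find a good level $s$. Your version slices on the inner side and then reflects; after the reflection identity $r_\sharp\partial\tilde T=\partial\tilde T$ this is equivalent.

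The one point that does not go through as written is your closing step. You write that the residual boundary is ``killed, componentwise in $\Z^m$, by the Deformation Theorem exactly as in Step~4 of the proof of Theorem~\ref{theo:density}'', producing a polyhedral $Q$ with $\partial Q=\mp r_\sharp\langle\tilde T,\partial W\rangle$ and $\supp Q\subset\R^n\setminus\Omega$. But Step~4 applies the Deformation Theorem to the $1$-current $f_\sharp T-P^I$ itself, not to its boundary, and the output $P^O+Q$ modifies the $1$-current everywhere on its support; transplanted here, applying it to $\Ext_0 T$ would alter $\Ext_0 T\LL\Omega$ and destroy $\Ext T\LL\Omega=T$. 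The Deformation Theorem does not directly produce a $1$-chain with a \emph{prescribed} $0$-boundary confined to a prescribed region. The paper instead observes that the residual boundary is a finite sum $\sum_{i=1}^M\tilde\theta(x_i)\delta_{x_i}$ with $\sum_i\tilde\theta(x_i)=0$ (since $\Z^m$-multiplicities have norm $\ge 1$, finite mass forces finitely many points), and simply joins each $x_i$ to $x_1$ by a Lipschitz curve $\gamma_i$ in $\R^n\setminus\Omega_s$ of length $\le C(\Omega)$; the correction is $\sum_{i\ge2}\tilde\theta(x_i)\,\tau_i\,\calH^1\LL\gamma_i$. This elementary cone-type construction gives exactly the $Q$ you want, with the support and mass control you claim, without invoking Federer~4.2.9. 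With this replacement your argument is complete and matches the paper's.
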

\begin{proof}
{\it Step 1.} We first extend $T$ to a neighbourhood of $\Omega$.

  Choose a function $N\in C^1(\partial\Omega;\Snmu)$ such that 
$N(x)\cdot \nu(x) \ge \alpha>0$  for almost all $x\in\partial\Omega$, where
$\nu$ is the outer normal to $\partial\Omega$ and $\Snmu$ is the unit sphere in $\R^n$. For $\rho>0$ sufficiently small the function 
$g:\partial\Omega\times (-\rho,\rho)\to \R^n$, $g(x,t)=x+tN(x)$, is
 bi-Lipschitz. Let $D_\rho=g(\partial\Omega\times (-\rho,\rho))$ and 
$f:D_\rho\to D_\rho$ be defined by $f(g(x,t))=g(x,-t)$. Then $f$ is bi-Lipschitz
and coincides with its inverse.

We define $\tilde T=T - f_\sharp (T \LL (D_\rho\cap \Omega))$. 
Let $\varphi\in C^1_c(\Omega\cup D_\rho)$. Then, recalling (\ref{eqdefpartialT}) and
interpreting the duality in that sense,
\begin{equation*}
  \langle \tilde T, D\varphi\rangle = 
  \langle T, D\varphi\rangle -
  \langle f_\sharp (T \LL (D_\rho\cap \Omega)), D\varphi \rangle =
  \langle T, D\varphi-D((\varphi\chi_{D_\rho\setminus\Omega})\circ f)\rangle =0
\end{equation*}
since $\varphi-(\varphi\chi_{D_\rho\setminus\Omega})\circ f \in  W^{1,\infty}_0(\Omega)$, and $T$ is closed.

\sloppypar
{\it Step 2.} Let $\tilde \gamma$  and $\tilde \theta$ be the support and the
multiplicity of $\tilde T$, defined as in
(\ref{rect_curr_gr2}). We can slice the outer tubular neighborhood $D_\rho\setminus\Omega=g(\partial\Omega\times[0,\rho))$ 
through the family of sets $\partial(\Omega_s)$ with $s\in[0,\rho)$. More precisely, we slice
(see  \cite[Section 4.3]{federer} or \cite{Krantz_Parks}) the current $\tilde T\LL(D_\rho\setminus\Omega)$ with the 
distance function from $\partial\Omega$. By slicing, we get that
\[
\mass(\tilde T)\ge \int_0^\rho \Big( \sum_{x\in \tilde\gamma\cap \partial(\Omega_s)}   |\tilde\theta(x)| \Big) ds\,.
\]
Moreover, we can choose $s\in (0,\rho)$ such that 
\begin{equation*}
  \sum_{x\in \tilde\gamma \cap \partial(\Omega_s)}  |\tilde\theta(x)| \le \cc  \mass(T)\,,
  \end{equation*}
with a constant depending only on $\Omega$, and the sum runs over finitely many points $x_1, \dots, x_M$. Let us point out that the set of points $\{x_1,\dots, x_M\}$, with multiplicity $\tilde\theta(x_1), \dots,\tilde\theta(x_M)$ and positive orientation if $\tilde\gamma$ exits
$\Omega_s$ at $x_i$, are the boun\-da\-ry of $\tilde T\LL\Omega_s$.
For each $i=2,\dots, M$, let $
\gamma_i$ be a Lipschitz curve in $\R^n\setminus \Omega_s$ which joins $x_1$ with
$x_i$ and has length bounded by $C(\Omega)$. Let $\tau_i$ be the tangent vector,
with the same orientation as $\tilde\gamma$ in $x_i$. We
set 
\begin{equation*}
  \langle \Ext T,\varphi\rangle = \langle \tilde T\LL \Omega_s, \varphi\rangle + 
  \sum_{i=2}^M \tilde\theta(x_i) \int_{\gamma_i}  \langle D\varphi,\tau_i\rangle
  d\calH^1\,\quad\forall\,\varphi\in C^\infty_c(\R^n,\R^n). 
\end{equation*}
Since $T$ was closed one can see that $\Ext T$ is also closed.
To conclude the proof it is enough to note that, by construction,  $\mass
(\Ext T\LL\partial\Omega)=0$  and hence $\lim_{\delta\to0} \mass(\Ext T\LL
(\Omega_\delta\setminus\Omega))=0$. 
\end{proof}

\subsection{Compactness and Structure}\label{sec:compact}
In this section we characterize the support of rectifiable $1$-currents
without boundary as a countable union of loops. This characterization is
known in the theory of one dimensional integral currents (i.e. with scalar
multiplicity). In the latter case the result is stated in \cite{federer},
subsection 4.2.25, where a quick sketch of the proof is also given. Here,
for the convenience of the reader, we will give a complete proof. 

We start with the compactness statement, which is also used in proving the Structure Theorem \ref{thm:structure}.
\begin{theorem}[Compactness]\label{compactness}
  Let $(T_k)_{k\in\N}$ be a sequence of rectifiable $1$-currents 
  without boundary in $\calR_1(\R^n;\Z^m)$. If 
  \begin{equation*}
    \sup_{k\in\N} \mass(T_k) <\infty
  \end{equation*}
  then there are a current $T\in\calR_1(\R^n;\Z^m)$ without boundary and a subsequence 
 $(T_{k_j})_{j\in\N}$ such that
  \begin{equation*}
    T_{k_j} \weakstarto T  \,.
  \end{equation*}
\end{theorem}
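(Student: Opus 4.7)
The plan is to reduce to the classical scalar compactness theorem by working componentwise. Writing $T_k=(T_k^1,\ldots,T_k^m)$, each $T_k^j$ is the scalar rectifiable $1$-current $\langle T_k^j,\varphi\rangle=\int_{\gamma_k}\theta_k^j\langle\varphi,\tau_k\rangle\,d\calH^1$ with $\theta_k^j\in\Z$. Since the Euclidean norm on $\Z^m$ satisfies $|\theta_k^j|\le|\theta_k|$ pointwise on $\gamma_k$, the mass $\mass(T_k^j)\le\mass(T_k)$ is uniformly bounded, and $\partial T_k^j=0$ because $\partial T_k=0$.

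Next I would invoke the Federer--Fleming compactness theorem for scalar integral $1$-currents \cite[4.2.17]{federer} to each fixed $j$ and, after a diagonal extraction, obtain scalar rectifiable limits $T^j\in\calR_1(\R^n;\Z)$ with $T_k^j\weakstarto T^j$ for every $j=1,\ldots,m$. Since the boundary operator is continuous with respect to weak-$*$ convergence, $\partial T^j=0$ for each $j$.

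The remaining step is to reassemble the $T^j$ into a single $\Z^m$-valued current. Writing each $T^j$ as in (\ref{rect_curr_gr2}) with a $1$-rectifiable support $\gamma^j$, tangent $\tau^j$ and multiplicity $\theta^j\in L^1(\gamma^j;\Z)$, set $\gamma=\bigcup_{j=1}^m\gamma^j$, which is still $1$-rectifiable, and fix an $\calH^1$-measurable unit tangent $\tau$ on $\gamma$. At $\calH^1$-a.e. $x\in\gamma^j$ the tangent lines to $\gamma^j$ and $\gamma$ coincide, so $\tau^j(x)=\sigma^j(x)\tau(x)$ for some $\sigma^j(x)\in\{\pm1\}$. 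Extending $\sigma^j\theta^j$ by zero to $\gamma\setminus\gamma^j$, the vector multiplicity $\theta=(\sigma^1\theta^1,\ldots,\sigma^m\theta^m)\in L^1(\gamma;\Z^m)$ defines, through $(\gamma,\tau,\theta)$, a rectifiable current $T\in\calR_1(\R^n;\Z^m)$ whose $j$-th scalar component is $T^j$. Weak-$*$ convergence $T_k\weakstarto T$ in $\R^m$-valued duality, and closedness $\partial T=0$, then both follow componentwise.

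The essential analytic content sits in the scalar compactness theorem, which already delivers rectifiability of each limit together with integrality of its multiplicity; the reassembly is essentially bookkeeping about orientations on a common rectifiable carrier. Working componentwise is harmless here because a compactness statement is unaffected by the factor $\sqrt{m}$ discrepancy between $|\theta|$ and $\sum_j|\theta^j|$ that would otherwise forbid componentwise arguments for sharper results such as the density theorem.
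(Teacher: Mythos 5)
Your proof is correct and takes essentially the same route as the paper, which simply invokes the scalar Federer--Fleming compactness theorem componentwise; the only difference is that you spell out the diagonal extraction and the reassembly of the $m$ scalar limits $T^j$ onto a common rectifiable carrier $\gamma=\bigcup_j\gamma^j$ with a single measurable orientation, a bookkeeping step the paper leaves implicit. Your closing remark about why the componentwise reduction is harmless for compactness but not for density matches the paper's own discussion following Theorem~\ref{theo:density}.
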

\begin{proof}
  This follows from the result on scalar currents
\cite[Theorems 4.2.16]{federer}
 working componentwise.
\end{proof}

\begin{theorem}[Structure]\label{thm:structure}
  Let $T\in \calR_1(\R^n;\Z^m)$ with $\partial T=0$ and $\mass(T)<\infty$. Then there are countably many
  oriented Lipschitz closed curves $\gamma_i$ with tangent vector fields
  $\tau_i:\gamma_i\to   \Snmu$ and multiplicities $\theta_i\in \Z^m$ 
such that
  \begin{equation*}
    \langle T,\varphi\rangle =
    \sum_{i\in\N} \theta_i \int_{\gamma_i}\langle \varphi , \tau_i\rangle \, d\calH^1\,.
  \end{equation*}
Further,
\begin{equation*}
  \sum_i|\theta_i| \calH^1(\gamma_i)\le \sqrt{m} \,\mass(T)\,.
\end{equation*}
\end{theorem}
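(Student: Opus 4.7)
\medskip
\noindent\textbf{Proof plan.} The plan is to reduce to the scalar setting by projecting $T$ onto its $m$ coordinates, applying the classical structure theorem for closed integer $1$-currents to each scalar component, and then gluing the resulting loops into one $\Z^m$-valued family.

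First I would write $T=\sum_{j=1}^m T^{(j)} e_j$, where $T^{(j)}\in\calR_1(\R^n;\Z)$ is the scalar rectifiable $1$-current supported on the same set $\gamma$, with tangent $\tau$ and integer multiplicity $\theta^{(j)}:\gamma\to\Z$ equal to the $j$-th coordinate of $\theta$. Since $\partial T=0$, testing with $D\psi\otimes e_j$ gives $\partial T^{(j)}=0$ for every $j$, and clearly $\mass(T^{(j)})=\int_\gamma|\theta^{(j)}|\,d\calH^1\le\mass(T)$. This is the same componentwise reduction used in the proof of Theorem~\ref{compactness}.

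Next I would invoke the classical scalar structure theorem \cite[4.2.25]{federer} on each $T^{(j)}$: for every $j$ there is a countable family of oriented closed Lipschitz curves $\{\gamma_{i,j}\}_{i\in\N}$ with tangent fields $\tau_{i,j}$ and integer multiplicities $n_{i,j}\in\Z$ satisfying
\[
\langle T^{(j)},\varphi\rangle=\sum_i n_{i,j}\int_{\gamma_{i,j}}\langle\varphi,\tau_{i,j}\rangle\,d\calH^1
\qquad\text{and}\qquad
\sum_i|n_{i,j}|\,\calH^1(\gamma_{i,j})=\mass(T^{(j)}).
\]
Reindexing the double family $\{(\gamma_{i,j},\,n_{i,j}e_j)\}_{i,j}$ as a single sequence $\{(\gamma_k,\theta_k)\}_{k\in\N}$ with $\theta_k=n_{i,j}e_j\in\Z^m$ yields the announced representation of $T$ by linearity in $j$.

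The mass bound then follows from the elementary inequality $\sum_{j=1}^m|a_j|\le\sqrt{m}\,|a|$ for $a\in\R^m$, applied pointwise to $\theta$:
\[
\sum_k|\theta_k|\,\calH^1(\gamma_k)=\sum_{j=1}^m\mass(T^{(j)})=\sum_{j=1}^m\int_\gamma|\theta^{(j)}|\,d\calH^1\le\sqrt{m}\int_\gamma|\theta|\,d\calH^1=\sqrt{m}\,\mass(T).
\]
The only delicate point is identifying where the factor $\sqrt{m}$ comes from: it is entirely due to the comparison between the $\ell^1$- and $\ell^2$-norms on $\Z^m$, not to any loss in the scalar decomposition, which is precisely the same phenomenon that prevents the mass of a $\Z^m$-valued current from being additive over its components. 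Beyond this there is no real obstacle, since all the nontrivial work is carried out by Federer's scalar result.
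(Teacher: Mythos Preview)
Your proposal is correct and uses exactly the same componentwise reduction to the scalar case that the paper uses in its first sentence: both proofs write $T=\sum_j T^{(j)}e_j$ and observe that the $\sqrt m$ factor arises from the comparison $\sum_j|\theta^{(j)}|\le\sqrt m\,|\theta|$.

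The only real difference is what happens after the reduction. You invoke Federer's \cite[4.2.25]{federer} as a black box for the scalar structure theorem, whereas the paper gives a self-contained proof of the scalar case: it approximates $T$ (via its own density result, Corollary~\ref{cor:density}) by closed polyhedral currents $P_k$, splits each $P_k$ into finitely many polyhedral loops, and passes to the limit loop-by-loop using compactness and a short-loops estimate. The paper does this deliberately: it notes just before the theorem that in Federer the scalar statement is only accompanied by ``a quick sketch of the proof'', and the purpose here is precisely to supply a complete argument. So your route is legitimate and shorter, but it outsources the nontrivial scalar step to a reference whose proof the paper regards as incomplete; the paper's route is longer but self-contained and moreover exercises the density machinery developed earlier in Section~\ref{subsec:density}.
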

\begin{proof}
Since each current in $\calR_1(\Omega;\Z^m)$ can be  seen as the sum of $m$ rectifiable $1$-currents with scalar integer multiplicity, it suffices to prove the statement in the scalar case $m=1$.
From the density of polyhedral currents (see Corollary \ref{cor:density})
there is a sequence of polyhedral currents
without boundary $P_k\in  
\calR_1(\R^n;\Z)$ such that
\begin{equation*}
  P_k\weakstarto T \text{ and }\  \mass(P_k)\to \mass(T)\,.
\end{equation*}
Each $P_k$ can be decomposed into the sum of finitely many polyhedral loops,
\begin{equation*}
  P_k=\sum_{j=1}^{J_k} L_{j,k}\,,
\end{equation*}
such that 
\begin{equation}\label{mass-bound}
\sum_{j=1}^{J_k} \mass(L_{j,k})=  \mass(P_k)\leq M\,,
\end{equation}
for some $M>0$.

We can assume these loops $L_{j,k}$ to be ordered by mass, starting with the biggest one.
Moreover we can assume (up to extracting a subsequence) that the currents
$L_{j,k}$ have  multiplicity $1$
and that for every $j$ they weakly converge to some closed rectifiable $1$-current $L_j$. Let us denote by $\tilde T$ the current
$$
\tilde T=\sum_{j=1}^{\infty} L_{j}.
$$
We need to show that $\tilde T=T$. If $\mass(T)=0$ there is nothing to prove. 
Otherwise we fix  $\delta>0$ and observe that by
 \eqref{mass-bound}  we have $\mass(L_{i,k})<\delta$ for all $i>M/\delta$.
We write
\begin{equation}\label{decomposizione}
\langle P_k, \varphi\rangle = \sum_{i\leq \frac{M}{\delta}}\langle L_{i,k}, \varphi\rangle + \sum_{i> \frac{M}{\delta}}\langle L_{i,k}, \varphi\rangle\,.
\end{equation}
In the first sum of the right hand side we can take the limit as $k\to\infty$
and get $\sum_{i\leq \frac{M}{\delta}}\langle L_{i}, \varphi\rangle$.
Parametrizing each polyhedral curve by arc length, and possibly passing to a 
further subsequence, we see that each polyhedral curve converges to a closed Lipschitz curve.

The second sum in (\ref{decomposizione}) can be estimated as follows. For every $i>M/\delta$ and for every $k$ we fix a point $x^k_i\in\supp L_{i,k}=\gamma_{i,k}$ and using the fact that $\gamma_{i,k}$ is a closed curve we have
\begin{equation}\label{stima1}
\begin{split}
\Big|\sum_{i> \frac{M}{\delta}}\langle L_{i,k},
\varphi\rangle\Big|&=\Big|\sum_{i>
  \frac{M}{\delta}}\int_{\gamma_{i,k}}\langle
\varphi-\varphi(x^k_i), \tau^k_i\rangle\, d\calH^1\Big|\\& \leq  \sum_{i>
  \frac{M}{\delta}} \sup_{\gamma_{i,k}}|\varphi-\varphi(x^k_i)|
\mass(L_{i,k})\\ 
&\leq  \delta \|\varphi\|_{\Lip}\sum_{i> \frac{M}{\delta}} \mass(L_{i,k})\leq \delta M \|\varphi\|_{\Lip}\,.
\end{split}
\end{equation}
Then we get
$$
\Big|\langle T-\sum_{i\leq \frac{M}{\delta}}\langle L_{i},\varphi\rangle\Big| \leq o(1) +\Big|\langle P_k-\sum_{i\leq \frac{M}{\delta}}\langle L_{i,k},\varphi\rangle\Big|\leq o(1)+\delta M \|\varphi\|_{\Lip}
$$
which implies $T=\tilde T$ and hence
$$
T=\sum_{j=1}^\infty  \tau_j\calH^1\LL \gamma_j\,,
$$
with $\gamma_j=\supp L_j$ and $\tau_j$ the corresponding tangent vector. 
\end{proof}

\section{Relaxation}\label{secrelax}
\subsection{Main result}
In this section we consider the relaxation of functionals of the form
$$
E(\mu)=\begin{cases}\displaystyle \int_{\gamma}\psi(\theta, \tau)
\,d\calH^1& \text{ if } \mu=\theta\otimes \tau \calH^1\LL\gamma \in
\MBCCOmega,\\ +\infty & 
\hbox{ otherwise.}\end{cases} 
$$
We shall show that the relaxation is
$$
\bar E(\mu)=\begin{cases}\displaystyle \int_{\gamma}\bar\psi(\theta, \tau)
\,d\calH^1& \text{ if } \mu=\theta\otimes \tau \calH^1\LL\gamma\in
\MBCCOmega,\\ +\infty & 
\hbox{ otherwise,}\end{cases} 
$$
where $\bar\psi$ is defined by solving 
for any $b\in\Z^m$ and $t\in \Snmu$ a cell
problem, namely,
\begin{alignat}1
\bar\psi(b,t)=&\inf\Big\{ \displaystyle\int_\gamma
\psi(\theta,\tau)\, d\calH^1\,:\ \mu=\theta\otimes \tau \calH^1\LL\gamma\in
\MBCCOmega[B_{1/2}]\,,\nonumber \\  
&\hskip3.5cm \supp(\mu - b\otimes t
\calH^1\LL (\R t \cap B_{1/2})) \subset B_{1/2}\Big\}
\label{relax} 
\end{alignat}
where $B_{1/2}$ denotes a ball of radius $1/2$ and center $0$. The condition on
the support in (\ref{relax}) fixes the boundary values of $\mu$, in the sense that
it requires the existence of a ball $B'\subset\subset B_{1/2}$ 
with $\mu= b\otimes t
\calH^1\LL \R t$ on $B_{1/2}\setminus B'$.
We call the function $\bar\psi$ the {\it $\calH^1$-elliptic envelope of
  $\psi$} and say that $\psi$ is $\calH^1$-elliptic if $\bar\psi=\psi$. 
It is easy to see  that $\bar\psi(b,t)\leq\psi(b,t)$, and our result
implies that   $\bar\psi$ is the largest
$\calH^1$-elliptic  function below $\psi$.

For any open set $\omega\subset\Omega$, we write
\begin{equation*}
E(\mu,\omega)=\int_{\gamma\cap \omega}
  \psi(\theta,\tau) d\calH^1  
\end{equation*}
where $\mu=\theta\otimes \tau\calH^1\LL\gamma\in \MBCCOmega$, and the same for
$\bar E$. 
\begin{theorem}[Relaxation]\label{theo:relaxation}
Let $\psi:\Z^m\times \Snmu\to[0,\infty)$  be Borel measurable with
$\psi(b,t)\ge c_0|b|$ and $\psi(0, \cdot)=0$;
define  $\bar\psi$ as in  (\ref{relax}). Let $\Omega\subset\R^n$ be a
bounded Lipschitz set.
Then $\bar E$
is the lower semicontinuous envelope of
$E$
with respect to the weak convergence in $\MBCCOmega$, in the sense that
\begin{alignat*}1
\bar E(\mu) &= \inf\left\{\liminf_{j\to\infty}
E(\mu_j): \mu_j\in \MBCCOmega, \,\, \mu_j\weakstarto \mu \right\}\,.
\end{alignat*}
In particular, $\bar E$ is lower semicontinuous.
\end{theorem}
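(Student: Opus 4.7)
The plan is to prove the two standard inequalities of the relaxation. Since the straight-line current $b \otimes t \calH^1 \LL (\R t \cap B_{1/2})$ is an admissible competitor in \eqref{relax}, one has $\bar\psi(b,t) \le \psi(b,t)$, hence $\bar E \le E$. It then remains to prove (a) the lower bound $\liminf_j E(\mu_j) \ge \bar E(\mu)$ whenever $\mu_j \weakstarto \mu$ in $\MBCCOmega$, and (b) the existence, for every $\mu \in \MBCCOmega$, of a sequence $\mu_j \weakstarto \mu$ with $\limsup_j E(\mu_j) \le \bar E(\mu)$.

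For the recovery sequence (b), the idea is to paste rescaled near-minimizers of the cell problem into $\mu$ at $\calH^1$-almost every point of $\gamma$. Fix $\delta > 0$ and for each pair $(b,t) \in \Z^m \times \Snmu$ select $\nu_{b,t,\delta} \in \MBCCOmega[B_{1/2}]$ with $E(\nu_{b,t,\delta}) \le \bar\psi(b,t) + \delta$ and $\nu_{b,t,\delta} = b \otimes t \calH^1 \LL (\R t \cap B_{1/2})$ on a neighborhood of $\partial B_{1/2}$. By Besicovitch's covering theorem, cover $\calH^1$-almost all of $\gamma$ by pairwise disjoint closed balls $\overline{B_{r_k}(x_k)} \subset \Omega$ such that $x_k$ is a ``good'' point of $\gamma$ (tangent exists, density one, $\theta$ approximately continuous) and $r_k$ is so small that $\mu \LL B_{r_k}(x_k)$ is close in mass and in position to $\theta(x_k) \otimes \tau(x_k) \calH^1 \LL (x_k + \R\tau(x_k))$. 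Replace $\mu \LL B_{r_k}(x_k)$ by the corresponding rescaled near-minimizer and apply a short surgery near $\partial B_{r_k}(x_k)$, in the spirit of Lemma \ref{lemma:extension}, to reconnect the boundary traces and preserve the divergence-free condition. The surgery cost is dominated by the excess $\mass(\mu \LL B_{r_k}(x_k)) - 2 r_k |\theta(x_k)|$, which summed over the covering is $o(\mass(\mu))$, while the dominant contribution $\sum_k 2 r_k (\bar\psi(\theta(x_k), \tau(x_k)) + \delta)$ converges to $\bar E(\mu)$ as the covering is refined and $\delta \to 0$.

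For (a), let $\mu_j \weakstarto \mu$ with $\sup_j E(\mu_j) < \infty$ and set $\lambda_j = \psi(\theta_j, \tau_j) \calH^1 \LL \gamma_j$. Passing to a subsequence we may assume $\lambda_j \weakstarto \lambda$ as non-negative Radon measures on $\Omega$. It suffices to show
\[
\limsup_{r \to 0} \frac{\lambda(\overline{B_r(x_0)})}{2r} \ge \bar\psi(\theta(x_0), \tau(x_0))
\]
at $\calH^1$-a.e.\ $x_0 \in \gamma$, since standard differentiation then yields $\lambda \ge \bar\psi(\theta, \tau) \calH^1 \LL \gamma$ and hence $\liminf_j E(\mu_j) = \liminf_j \lambda_j(\Omega) \ge \bar E(\mu)$. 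To establish the density bound, fix a good $x_0$; for $r$ small the rescaling $y \mapsto (y-x_0)/r$ pushes $\mu \LL B_r(x_0)$ close to $\theta(x_0) \otimes \tau(x_0) \calH^1 \LL (\R\tau(x_0) \cap B_{1/2})$, and a diagonal choice in $j$ brings the rescaled $\mu_j \LL B_r(x_0)$ close to the same affine current. A Fubini/slicing argument selects a radius $\rho \in (1/4, 1/2)$ at which the rescaled $\mu_j$ has only finitely many transverse intersections with $\partial B_\rho$ of controlled total multiplicity, and Lemma \ref{lemma:extension} produces an admissible competitor for \eqref{relax} whose energy differs from $r^{-1} \lambda_j(B_r(x_0))$ by a quantity that vanishes as $j \to \infty$ and then $r \to 0$.

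The main obstacle is the surgery step common to both directions: a piece of a $\Z^m$-valued rectifiable current has to be replaced by one matching prescribed affine boundary data (either the tangent datum of $\mu$ inside a ball for (b), or the affine cell-problem datum $b \otimes t \calH^1 \LL \R t$ for (a)) while preserving closedness, and the energy excess of the surgery must be quantified. Since $\psi$ is only Borel measurable with lower bound $\psi \ge c_0 |\cdot|$, the excess can only be controlled by mass, so one needs to bound the mass of the surgery arcs in terms of the boundary discrepancy on a well-chosen sphere, and verify that this discrepancy vanishes in the right order of limits using the density-one property of $\calH^1 \LL \gamma$ and the tangent approximation.
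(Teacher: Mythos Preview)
Your proposal has a genuine gap that affects both directions. You correctly identify the surgery step as the main obstacle, but your claim that ``the excess can only be controlled by mass'' is precisely where the argument breaks: since $\psi$ is only assumed Borel with the \emph{lower} bound $\psi(b,t)\ge c_0|b|$, a surgery arc of small mass can carry arbitrarily large $E$-energy. In the upper bound this means that the short arcs reconnecting the rescaled cell competitor (which equals $b\otimes t$ on a collar) to the actual traces of $\mu$ on $\partial B_{r_k}(x_k)$ may have uncontrolled $\psi$-cost, so the inequality $\limsup E(\mu_j)\le \bar E(\mu)$ does not follow from your covering. In the lower bound the same issue arises: the modification produced via Lemma~\ref{lemma:extension} involves reflected pieces and connecting arcs whose tangent directions are unrelated to those of $\mu_j$, so their $\psi$-energy need not vanish even if their mass does; hence you cannot conclude that the competitor's energy differs from $r^{-1}\lambda_j(B_r(x_0))$ by a vanishing quantity.

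The paper circumvents this by first proving, purely from the definition \eqref{relax} and explicit constructions, that the relaxed integrand $\bar\psi$ is Lipschitz in $t$ and sublinear in $b$ (Lemma~\ref{cellproblem}\ref{lipschitz}), so that $\bar E$ is continuous in mass and under bi-Lipschitz deformations (Lemma~\ref{lemmaenergycontinuous}). This allows one to work with $\bar E$ instead of $E$ and to reduce, via the density/deformation Theorem~\ref{theo:density}, to the case of a \emph{polyhedral} limit, for which no surgery is needed: in the upper bound the balls can be chosen so that $\gamma\cap B^k$ is exactly a diameter, and in the lower bound the limit is a straight segment inside each ball. The remaining issue in the lower bound---passing from weak convergence to the boundary-value formulation in \eqref{relax}---is handled by Lemma~\ref{lemma:boundary}, whose proof requires the Structure Theorem~\ref{thm:structure} and a squeezing projection; this is the content your ``slicing + Lemma~\ref{lemma:extension}'' step glosses over.
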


A key ingredient in the proof of the relaxation is to use the deformation theorem
to reduce to the case that the limit is
polyhedral. The continuity of   $\bar E$  under deformations 
follows from the Lipschitz continuity of
the integrand $\bar\psi$, see Lemma \ref{lemmaenergycontinuous}. 
In turn, the Lipschitz continuity of $\bar\psi$ is proven via a series of
constructions in Lemma \ref{cellproblem}.
The upper bound is then obtained covering the polyhedral with balls and using
the definition of $\bar\psi$. For the lower bound instead we need to show that 
$\bar E$ is lower semicontinuous on polyhedrals, which can  be done
locally assuming that the limit is a straight line. 
The most involved part of the proof deals with the relation between
minimization with boundary data and without boundary data, 
and  is discussed in Lemma \ref{lemma:boundary} below.

\subsection{Proof of the upper bound}
We start by proving the Lipschitz continuity of $\bar\psi$. As a side product
we also show that $\bar\psi$ (and hence any $\calH^1$-elliptic function), much
like the case of $BV$-elliptic integrands, is subadditive and convex.
\begin{lemma}[Cell problem]\label{cellproblem}
Let $\psi$, $\bar\psi$ be as in Theorem \ref{theo:relaxation}. Then:
\begin{enumerate}
  \item\label{polygonal}
    For every polyhedral measure $\mu=\sum_{i=1}^N b_i\otimes
  t_i\calH^1\LL\gamma_i\in \MBCCOmega[B_{1/2}]$ such that $\gamma_i\subset
  B_{1/2}$ are disjoint segments (up to the endpoints) and $\supp(\mu
  -b\otimes t\calH^1\LL (t\R\cap 
  B_{1/2}))\subset B_{1/2}$ one has
  \begin{equation*}
  \bar\psi(b,t)\leq \sum_{i=1}^N \calH^1(\gamma_i)\bar\psi(b_i,t_i)=\bar E(\mu, B_{1/2})    \,.
  \end{equation*}
  \item\label{convex} The function 
\begin{equation}\label{psiconvex}
t\mapsto \bar\psi\left(b,\frac{t}{|t|}\right)|t|
\end{equation}
is convex in $t\in\R^n$. In particular, $\bar\psi$ is continuous.
\item\label{subadditive} The function $\bar \psi$ is subadditive in its first argument, i.e.,
  \begin{equation*}
    \bar\psi(b+b',t)\leq\bar\psi(b, t)+ \bar\psi(b', t)\,.
  \end{equation*}
\item \label{sublinear} The function $\bar\psi$ obeys
  \begin{equation*}
    \frac1c|b|\le\bar\psi(b,t)\le c |b|
  \end{equation*}
  for all $b\in\Z^m$, $t\in \Snmu$.
\item\label{lipschitz} The function $\bar \psi$ is Lipschitz continuous in the sense that
  \begin{equation*}
    |\bar\psi(b,t)-\bar\psi(b', t')|\le c |b-b'|+ c |b|\, |t-t'|\,.
  \end{equation*}
\end{enumerate}
  with
  $\cc$ depending only on $\psi$.
\end{lemma}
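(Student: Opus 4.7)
The natural order to prove the five parts is (i), (iv lower), (iii), (iv upper), (ii), (v), since (i) is the scaffolding on which everything else rests. The main work goes into (i); each of (ii)--(v) then follows from (i) by an ad hoc polyhedral construction or a short slicing argument.

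\textbf{Part (i).} For the given polyhedral $\mu=\sum_i b_i\otimes t_i\calH^1\LL\gamma_i$ I plan to replace each straight segment $\gamma_i$, piece by piece, by near-optimal configurations supplied by the cell formula. Fix $\eta>0$. By definition of $\bar\psi(b_i,t_i)$ there exists a competitor $\nu_i\in\MBCCOmega[B_{1/2}]$ with $E(\nu_i)\le\bar\psi(b_i,t_i)+\eta$ and $\supp(\nu_i-b_i\otimes t_i\calH^1\LL\R t_i)\Subset B_{1/2}$. Rescaling $\nu_i$ by a factor $2\rho$ and translating gives, in any ball $B_\rho(x)$ whose center lies on $\gamma_i$ in direction $t_i$, a competitor of mass $\le 2\rho(\bar\psi(b_i,t_i)+\eta)$ agreeing with $b_i\otimes t_i\calH^1\LL(x+\R t_i)$ near $\partial B_\rho(x)$. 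For each $i$, choose a disjoint family of such balls $\{B_{\rho_{i,k}}(x_{i,k})\}_k$ contained in the interior of $\gamma_i$, away from every other $\gamma_j$ and from $\partial B_{1/2}$ (possible since the $\gamma_i$ are disjoint segments and $\supp\mu$ is a finite polyhedron), and satisfying $\sum_k 2\rho_{i,k}\ge\calH^1(\gamma_i)-\delta$. Swapping inside each ball the straight segment for the rescaled $\nu_i$ produces a current $\mu'\in\MBCCOmega[B_{1/2}]$ that is still admissible for $\bar\psi(b,t)$ (divergence-freeness is preserved because each local replacement matches the straight-line trace on $\partial B_{\rho_{i,k}}(x_{i,k})$, and the boundary condition on $\partial B_{1/2}$ is untouched). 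Summing energies, $\bar\psi(b,t)\le E(\mu')\le\sum_i[\calH^1(\gamma_i)(\bar\psi(b_i,t_i)+\eta)+\delta\,\psi(b_i,t_i)]$, and letting $\eta,\delta\to 0$ gives the claim. The main obstacle here is the geometric bookkeeping: making sure the chosen balls are simultaneously disjoint, disjoint from the other $\gamma_j$'s, and dense enough on each $\gamma_i$.

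\textbf{Parts (iv) lower bound and (iii).} For the lower bound in (iv), slice a competitor $\mu=\theta\otimes\tau\calH^1\LL\gamma$ with the family of hyperplanes $\{x\cdot t=s\}_{s\in(-1/2,1/2)}$. Divergence-freeness plus the boundary condition force the signed sum of multiplicities on almost every slice to equal $b$, so $\sum_{x\in\gamma\cap\{x\cdot t=s\}}|\theta(x)|\ge|b|$; integrating in $s$ via the coarea formula yields $\int_\gamma|\theta|\,|\tau\cdot t|\,d\calH^1\ge|b|$, and combining with $\psi\ge c_0|\theta|$ and $|\tau\cdot t|\le 1$ gives $E(\mu)\ge c_0|b|$. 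For (iii), build a polyhedral competitor for $\bar\psi(b+b',t)$: inside a ball $B_r\Subset B_{1/2}$, keep the straight line with multiplicity $b$ and add a ``C-shape'' of multiplicity $b'$ consisting of two perpendicular legs of length $\eps$ and one horizontal leg of length $2r$ at height $\eps n$ (with $n\perp t$). A direct check of the flow balance at the four nodes shows the configuration is divergence-free, and applying (i) gives
\begin{equation*}
\bar\psi(b+b',t)\le(1-2r)\bar\psi(b+b',t)+2r[\bar\psi(b,t)+\bar\psi(b',t)]+2\eps\,\bar\psi(b',n);
\end{equation*}
rearranging and sending $\eps\to 0$ yields subadditivity.

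\textbf{Parts (iv) upper bound, (ii), (v).} Iterating (iii) along the canonical decomposition $b=\sum_i|b_i|\,\sigma_i e_i$ with $\sigma_i=\sgn(b_i)$ gives $\bar\psi(b,t)\le\sum_i|b_i|\bar\psi(\sigma_i e_i,t)\le c|b|$, using the straight-line upper bound $\bar\psi(\pm e_i,t)\le\psi(\pm e_i,t)$ and the local boundedness of $\psi$ on the finite set of unit lattice vectors. For (ii), apply (i) to the triangular polyhedral competitor for $\bar\psi(b,(t_1+t_2)/|t_1+t_2|)$ consisting of the two legs $[0,t_1/|t_1+t_2|]$ and $[t_1/|t_1+t_2|,(t_1+t_2)/|t_1+t_2|]$ (rescaled and translated into $B_{1/2}$, with multiplicity $b$); multiplying by $|t_1+t_2|$ gives $F(t_1+t_2)\le F(t_1)+F(t_2)$ for $F(t):=|t|\bar\psi(b,t/|t|)$, and combined with the positive $1$-homogeneity of $F$ this is convexity on $\R^n$. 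Continuity of $\bar\psi$ in $t$ is the continuity of a convex function in the interior of its domain. Finally, (v) follows by splitting
\begin{equation*}
|\bar\psi(b,t)-\bar\psi(b',t')|\le|\bar\psi(b,t)-\bar\psi(b',t)|+|\bar\psi(b',t)-\bar\psi(b',t')|:
\end{equation*}
the first term is controlled by $\bar\psi(b-b',t)\le c|b-b'|$ via (iii)--(iv), and for the second, subadditivity of $F$ gives $|F(t)-F(t')|\le F(t-t')\le c|b|\,|t-t'|$, so $|\bar\psi(b,t)-\bar\psi(b,t')|\le c|b|\,|t-t'|$ on $\Snmu$ (noting that the RHS features $|b|$ not $|b'|$, which matches the statement by symmetrizing the decomposition appropriately).
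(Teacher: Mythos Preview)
Your proof is correct and follows essentially the same strategy as the paper's: part (i) is done by a Vitali-type covering of the segments with small balls in which one inserts rescaled near-optimal competitors, and parts (ii)--(v) are obtained by feeding specific polyhedral configurations into (i). The substantive difference is in the ordering of (ii) and (iii). The paper proves (ii) first and then uses the resulting continuity of $\bar\psi(\cdot,t)$ in $t$ to conclude (iii): their competitor for subadditivity is a triangular deflection whose two slanted sides have directions $t'_\eps,t''_\eps\to t$, so one needs $\bar\psi(b',t'_\eps)\to\bar\psi(b',t)$ to finish. Your rectangular ``C-shape'' keeps the long segment exactly parallel to $t$, so the limiting step only needs finiteness of $\bar\psi(b',\pm n)$ (trivial from $\bar\psi\le\psi$), and (iii) becomes independent of (ii). This is a small but genuine simplification.

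Two minor points. In (iii) the two short legs have opposite orientations, so the error term is really $\eps[\bar\psi(b',n)+\bar\psi(b',-n)]$ rather than $2\eps\,\bar\psi(b',n)$; this is harmless since both are finite. In (ii), your triangle with vertices $0,\,t_1/|t_1+t_2|,\,(t_1+t_2)/|t_1+t_2|$ does connect $-t/2$ to $t/2$ after translation, but the middle vertex need not lie in $B_{1/2}$; to be safe one should scale it down by a factor $\delta$ and add the straight segments out to $\pm t/2$, exactly as the paper does, which yields $\bar\psi(b,t)\le(1-\delta|t_1+t_2|)\bar\psi(b,t)+\delta F(t_1)+\delta F(t_2)$ and hence $F(t_1+t_2)\le F(t_1)+F(t_2)$ upon rearranging. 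Your slicing argument for the lower bound in (iv) is more explicit than the paper's one-line claim, but arrives at the same inequality.
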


\begin{figure}[t]
\begin{center}
 \includegraphics{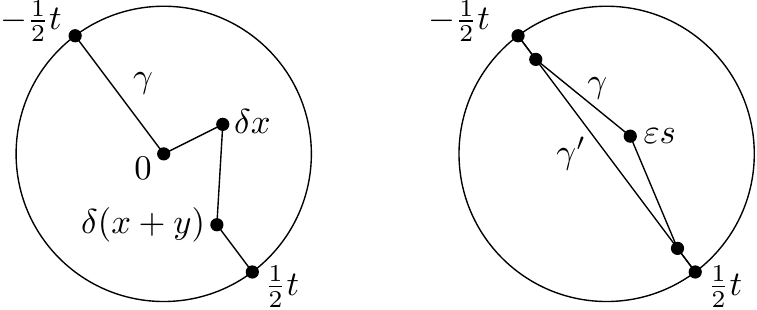}
\end{center}
\caption{Constructions used in the proof of Lemma \ref{cellproblem}\ref{convex}
  (left) and Lemma \ref{cellproblem}\ref{subadditive} (right).}
\label{fig:fig3} 
\end{figure}

\begin{proof}
\ref{polygonal}:
Let $B'$ be a ball such that 
$\supp(\mu
  -b\otimes t\calH^1\LL (t\R\cap 
  B_{1/2}))\subset B'\subset\subset B_{1/2}$.
We cover $\gamma=\cup_{i=1}^N\gamma_i\cap B'$
with a countable number of balls $\{B^k\}_{k\in\N}$ such that:
 the balls are disjoint and contained in $B'$;
 $\gamma\cap B^k$ is a diameter of $B^k$, $\mu\LL B^k
=b_{i_k} \otimes t_{i_k} \calH^1\LL (\gamma \cap B^k)$ for some $i_k\in \{1,
\dots, N\}$,
 $\calH^1(\gamma\setminus \cup_{k\in \N} B^k)=0$. 
Let $\eps>0$. By the definition of $\bar\psi$, for every $k$ we can find a
measure $\mu_k\in \MBCC(B^k)$ with $\supp(\mu_k - (\mu\LL B^k)) \subset B^k$ such that
\begin{equation*}
E(\mu_k, B^k)\leq \diam(B^k)\bar\psi(b_{i_k}, t_{i_k}) +\frac{\eps}{2^k}\,.
\end{equation*}
We define $\nu=\sum_k \mu_k + \mu\LL (B_{1/2}\setminus B')$. Then $\nu\in
\MBCCOmega[B_{1/2}]$ and 
$\supp(\nu -b\otimes t\calH^1\LL (t\R\cap
  B_{1/2}))\subset B_{1/2}$, therefore
\begin{alignat*}1
\bar\psi(b,t)&\leq E(\nu, B_{1/2})=\sum_{k\in\N} E(\mu_k, B^k)
+E(\mu, B_{1/2}\setminus B')\\
&\leq \sum_{i=1}^N
\calH^1(\gamma_i)\bar\psi(b_{i}, t_{i}) +
\psi(b,t) \calH^1(r\R\cap B_{1/2}-B')+
\eps\,. 
\end{alignat*}
We conclude by the arbitrariness of $B'$ and $\eps$. 

\noindent\ref{convex}:
We extend $\bar\psi$ to $\Z^m\times \R^n$ to be  one-homogeneous in the last
argument (i.e., to be the function given in  (\ref{psiconvex})). 
Let $\tilde x, \tilde y\in \R^n$, $\lambda\in (0,1)$. We want to show that
\begin{equation*}
  \bar\psi(b, \lambda \tilde x+(1-\lambda) \tilde y)\le \lambda  \bar\psi(b,
  \tilde x)+(1-\lambda) \bar\psi(b,\tilde y)\,. 
\end{equation*}
By the definition of the extension of $\bar\psi$, defining $x=\lambda \tilde
x$ and $y=(1-\lambda)\tilde y$ it suffices to show that
\begin{equation*}
  \bar\psi(b, x+ y)\le   \bar\psi(b,
  x)+ \bar\psi(b, y)\,. 
\end{equation*}
If $x+y=0$ then $\bar\psi(b,x+y)=0$ and the statement holds. If not, we choose $\delta>0$ such that
$\delta x, \delta x+\delta y\in B_{1/2}$ and define $t=(x+y)/|x+y|$.
Let $\gamma$ be the polyhedral curve
that joins (in this order) the points
\begin{equation*}
  -\frac12 t, \hskip2mm 0, \hskip2mm\delta x , \hskip2mm  \delta x +\delta y,
  \hskip2mm  \frac12 t\,,
\end{equation*}
see Figure \ref{fig:fig3}.
Notice that the first and last segment belong to the line $t\R$ and that
$\gamma\subset \overline B_{1/2}$.
We apply \ref{polygonal} to the measure $\mu=b\otimes \tau \calH^1\LL\gamma$,
where $\tau$ is the tangent to $\gamma$, and obtain
\begin{equation*}
  \bar\psi(b,t)\le (1- \delta |x+y|)\bar\psi(b,t)+ \delta |x| \bar\psi\left(b, \frac{x}{|x|}\right)
+ \delta |y| \bar\psi\left(b, \frac{y}{|y|}\right)\,.
\end{equation*}
Rearranging terms this gives $\bar\psi(b,x+y)\le \bar\psi(b,x)+\bar\psi(b,y)$,
as desired.

\noindent\ref{subadditive}:
Fix $\eps>0$ and a vector $s\in \Snmu$ not parallel to $t$. Let $\gamma$ be
the curve that joins the points 
\begin{equation*}
  -\frac12 t, \hskip2mm \left(-\frac12 +\eps\right) t, \hskip2mm \eps s, \hskip2mm   \left(\frac12
  -\eps\right) t,\hskip2mm \frac12 t\,,
\end{equation*}
see Figure \ref{fig:fig3}.
We define the polyhedral measure
\begin{equation*}
  \mu_\eps = b\otimes t \calH^1\LL (t\R\cap B_{1/2}) + b' \otimes \tau \calH^1\LL\gamma\,,
\end{equation*}
where $\tau$ is the tangent vector to $\gamma$. Notice that the supports of the
two components overlap on the two segments of length $\eps$ close to $\partial
B_{1/2}$.
By \ref{polygonal} we obtain
\begin{equation*}
\bar  \psi(b+b',t)\le (1-2\eps) \bar\psi(b,t) + 2\eps \bar\psi(b+b',t) + 
  \frac12 \bar\psi(b',t'_\eps) + \frac12 \bar\psi(b', t''_\eps).
\end{equation*}
Since $\bar\psi$ is continuous in the second argument, taking $\eps\to0$ proves
the assertion. 

\noindent\ref{sublinear}:
The lower bound is immediate from the definition of $\bar\psi$ and the growth of $\psi$. To prove the upper bound, we
deduce from \ref{subadditive} 
\begin{equation*}
  \bar\psi(b,t)\le \sum_{j=1}^n |b\cdot e_j| (\bar\psi(e_j,t)+\bar\psi(-e_j,t))
\end{equation*}
and observe that, since $\bar\psi$ is continuous,
\begin{equation*}
  \max_{j=1,\dots, n} \max_{t\in \Snmu} (\bar\psi(e_j, t)+\bar\psi(-e_j, t))<\infty\,.
\end{equation*}

\noindent\ref{lipschitz}:
From \ref{subadditive} and \ref{sublinear} we obtain
\begin{equation*}
  \bar\psi(b,t)\le \bar\psi(b',t)+c|b-b'|\,,
\end{equation*}
while by  \ref{convex} and \ref{sublinear} we deduce that
\begin{equation*}
  \bar\psi(b,t)\le \bar\psi(b, t')+ |t-t'|\bar\psi\left(b, \frac{t-t'}{|t-t'|}\right) 
  \le \bar\psi(b,t')+ c |b| \, |t-t'|\,.
\end{equation*}
\end{proof}

We now show that the continuity of $\bar\psi$  proven in \ref{lipschitz} gives
continuity of $E$ under deformations.
\begin{lemma}\label{lemmaenergycontinuous}
  Assume that $\psi:\Z^m\times \Snmu\to[0,\infty)$ is Borel measurable,
  obeys $\psi(0,t)=0$, $\psi(b,t)\ge c|b|$ and
  \begin{equation*}
    |\psi(b,t)-\psi(b', t')|\le c |b-b'|+ c |b|\, |t-t'|\,.
  \end{equation*}
  Let $\mu$, $\mu'\in \MBCCOmega$. Then for any open set
$\omega\subset\Omega$ we have
\begin{equation*}
  |E(\mu,\omega)-E(\mu',\omega)|\le \cc  |\mu-\mu'|(\omega)\,.
\end{equation*}
Further, if $f:\R^n\to\R^n$ is bi-Lipschitz then
for any open set $\omega\subset\R^n$ 
\begin{equation*}
  |E(\mu,\omega)-E(f_\sharp\mu,f(\omega))| \le \cc  E(\mu,\omega)
  \|Df-\Id\|_{L^\infty}\,.
\end{equation*}
\end{lemma}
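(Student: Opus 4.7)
The plan is to derive both bounds directly from the Lipschitz hypothesis on $\psi$; (ii) additionally needs a change-of-variables computation and a pointwise comparison of the rotated tangents.

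For (i), I put both measures on the common support $\tilde\gamma:=\gamma\cup\gamma'$. Choosing any Borel unit tangent $\tilde\tau$ to $\tilde\gamma$, one may rewrite $\mu=\tilde\theta\otimes\tilde\tau\,\calH^1\LL\tilde\gamma$ and $\mu'=\tilde\theta'\otimes\tilde\tau\,\calH^1\LL\tilde\gamma$, where $\tilde\theta,\tilde\theta'$ coincide up to sign with $\theta,\theta'$ on $\gamma,\gamma'$ and vanish elsewhere. The implicit orientation symmetry $\psi(b,t)=\psi(-b,-t)$, needed for $E$ to be well defined on the unoriented measure $\mu$, together with $\psi(0,\cdot)=0$, gives $\psi(\tilde\theta,\tilde\tau)=\psi(\theta,\tau)$ on $\gamma$ and $0$ outside, and analogously for the primed quantities. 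The Lipschitz hypothesis, applied pointwise with the common second argument $\tilde\tau$, then yields
\[
|\psi(\tilde\theta,\tilde\tau)-\psi(\tilde\theta',\tilde\tau)|\le \cc\,|\tilde\theta-\tilde\theta'|,
\]
and integrating over $\tilde\gamma\cap\omega$ gives the claim since $|\mu-\mu'|(\omega)=\int_{\tilde\gamma\cap\omega}|\tilde\theta-\tilde\theta'|\,d\calH^1$.

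For (ii), starting from (\ref{eqactionfcurrent}) and the area formula on the rectifiable set $\gamma$,
\[
E(f_\sharp\mu,f(\omega))=\int_{\gamma\cap\omega}\psi\bigl(\theta(x),\tau^f(x)\bigr)\,|D_\tau f(x)|\,d\calH^1(x),\qquad \tau^f:=\frac{D_\tau f}{|D_\tau f|}.
\]
Setting $\lambda=|D_\tau f|$ and writing
\[
\psi(\theta,\tau)-\lambda\,\psi(\theta,\tau^f)=\bigl[\psi(\theta,\tau)-\psi(\theta,\tau^f)\bigr]+(1-\lambda)\,\psi(\theta,\tau^f),
\]
the Lipschitz hypothesis bounds the first bracket by $\cc|\theta|\,|\tau-\tau^f|$, while the upper bound $\psi(b,\cdot)\le\cc|b|$ (a direct consequence of the Lipschitz hypothesis and $\psi(0,\cdot)=0$) controls the second. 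The identity $\lambda\tau^f-\tau=(Df-\Id)\tau$ immediately yields $|\lambda-1|\le\|Df-\Id\|_{L^\infty}$ (reverse triangle inequality) and hence $|\tau^f-\tau|\le |1-\lambda|+|\lambda\tau^f-\tau|\le 2\|Df-\Id\|_{L^\infty}$, so the pointwise integrand is bounded by $\cc|\theta|\,\|Df-\Id\|_{L^\infty}$. The lower bound $|\theta|\le \cc\,\psi(\theta,\tau)$ finally converts the integral of $|\theta|$ over $\gamma\cap\omega$ into $E(\mu,\omega)$, giving the asserted estimate.

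The only real subtlety is the orientation bookkeeping in (i), which rests on the implicit symmetry $\psi(b,t)=\psi(-b,-t)$ required to make $E$ well defined on $\MBCCOmega$; everything else is a straightforward application of the Lipschitz hypothesis and standard pointwise estimates.
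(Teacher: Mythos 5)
Your proof is correct and takes essentially the same route as the paper: for the first estimate, put both measures on the common support with a common choice of unit tangent (adjusting signs where the two orientations disagree) and apply the Lipschitz bound pointwise; for the second, use the area formula to pull the integral back to $\gamma$, then split the integrand into a tangent-comparison term and a Jacobian term, each controlled via $\|Df-\Id\|_\infty$ and $c^{-1}|\theta|\le\psi(\theta,\tau)\le c|\theta|$. Your explicit remark that the argument implicitly uses the orientation symmetry $\psi(-b,-t)=\psi(b,t)$ (so that $E$ is independent of the chosen orientation of $\gamma$) is a genuine clarification of something the paper's proof uses silently when it ``changes the sign of $\theta'$ and $\tau'$''; the extra factor $2$ in your bound $|\tau^f-\tau|\le 2\|Df-\Id\|_\infty$ versus the paper's is harmless since both are absorbed into $\cc$.
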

We recall that 
in this paper  $f_\sharp$ denotes the action of $f$ on the current associated
to $\mu$, see (\ref{eqactionfcurrent}). In particular, if 
$\mu=\theta\otimes \tau\calH^1\LL \gamma$, then
\begin{equation*}
  f_\sharp\mu = \theta\circ f^{-1} \otimes \tilde\tau\calH^1\LL
  f(\gamma)\,,\hskip1cm \tilde \tau = \frac{ D_\tau f}{|D_\tau f|} \circ f^{-1}
\end{equation*}
where $D_\tau f$ denotes as in (\ref{eqactionfcurrent})  the tangential derivative.
\begin{proof}
  Let $\mu=\theta\otimes \tau \calH^1\LL \gamma$, 
$\mu'=\theta'\otimes \tau' \calH^1\LL \gamma'$.
To prove the first estimate we observe that $\tau=\pm \tau'$ $\calH^1$-a.e. on
$\gamma\cap\gamma'$. Changing the sign of $\theta'$ and $\tau'$ on the set where
$\tau=-\tau'$ we compute
\begin{equation*}
\int_{(\gamma\cup\gamma')\cap\omega} |\psi(\theta,\tau)-\psi(\theta',\tau')| d\calH^1 
\le c \int_{(\gamma\cup\gamma')\cap\omega} |\theta-\theta'| d\calH^1  \le c
|\mu-\mu'|(\omega), 
\end{equation*}
where we defined $\theta=0$, $\tau=\tau'$ on $\gamma'\setminus\gamma$ and
 $\theta'=0$, $\tau'=\tau$ on $\gamma\setminus\gamma'$.

To prove the second statement in the theorem we write, by the area formula,
\begin{equation*}
  E(f_\sharp\mu,f(\omega))=\int_{f(\gamma)\cap f(\omega)} \psi(\theta\circ
  f^{-1}, \tilde\tau) d\calH^1=\int_{\gamma\cap\omega} \psi(\theta,
  \tilde\tau\circ f  )  |Df\tau| d\calH^1
\end{equation*}
and observe that $|\tilde\tau\circ f-\tau|\le |Df-\Id|$.
\end{proof}

At this point we give the proof of the upper bound.
\begin{proof}[Proof of the upper bound in Theorem \ref{theo:relaxation}.]
We only need to deal with the case $\bar E(\mu,\Omega)<\infty$.
Let $\mu\in\MBCCOmega$. We need to construct a sequence of measures $\mu_k
 \in\MBCCOmega$ such that $\mu_k\weakstarto\mu$ and
 \begin{equation*}
   \limsup_{k\to\infty} E(\mu_k,\Omega)\le \bar E(\mu,\Omega)\,.
 \end{equation*}
By Lemma \ref{lemma:extension} we
can extend $\mu$ to a measure $\Ext\mu\in \MBCCOmega[\R^n]$, with 
\begin{equation*}
  \lim_{\delta\to0} |\Ext\mu|(\Omega_\delta\setminus\Omega)=0
\end{equation*}
(we recall that  $\Omega_\delta=\{x: \dist(x, \Omega)<\delta\}$).
By Theorem \ref{theo:density} there are
a sequence of polyhedral measures $\mu_k\in\MBCCOmega[\R^n]$ and a sequence of
$C^1$ and bi-Lipschitz maps 
$f_k$ such that
\begin{equation*}
|\mu_k-(f_k)_\sharp\Ext\mu|(\R^n)\to0\,,\hskip2mm
\|f_k-x\|_{L^\infty}\to0\,,\hskip2mm
\|Df_k-\Id\|_{L^\infty}\to0\,.
\end{equation*}
This implies
$\mu_k\weakstarto\Ext\mu$.
By Lemma \ref{lemmaenergycontinuous} and
Lemma~\ref{cellproblem}\ref{lipschitz} one obtains 
\begin{alignat*}1
 \bar E(\mu_k,\Omega)\le &
\bar E((f_k)_\sharp \Ext\mu, \Omega) + \cc \|\mu_k-(f_k)_\sharp\Ext\mu\|\\
\le &  \bar E(\Ext \mu,\Omega_{\delta_k})(1 + \cc \|Df_k-\Id\|_{L^\infty})
+ \cc \|\mu_k-(f_k)_\sharp\Ext\mu\|\,,
\end{alignat*}
where $\delta_k=\|f_k-x\|_{L^\infty}\to0$. Taking the limit we conclude
\begin{alignat*}1
\limsup_{k\to\infty}  \bar E(\mu_k,\Omega)\le &\bar E(\mu,\Omega)\,.
\end{alignat*}
Therefore it suffices to prove the upper bound for polyhedral measures
(since we are dealing with bounded subsets of $\MBCCOmega[\R^n]$, weak
convergence is metrizable).

Let  $\mu=\sum_{i=1}^N b_i\otimes
  t_i\calH^1\LL\gamma_i\in \MBCCOmega[\R^n]$ be a polyhedral measure, in the
  sense that the $\gamma_i$ are disjoint segments, $b_i\in\Z^m$,
  $t_i\in\Snmu$, for $i=1,\dots, N$. Let $\gamma=\cup_{i=1}^N \gamma_i$. 
  We choose  $\eps>0$ and cover $\gamma\cap\Omega$, up to an $\calH^1$-null set,
  with a countable number of disjoint balls $\{B^k=B_{r_k}(x_k)\}_{k\in\N}$ with $r_k<\eps$, which are contained in
$\Omega$ and have the property that 
$\gamma\cap B^k$ is a diameter of $B^k$ and $\mu\LL B^k
=b_{i_k} \otimes t_{i_k} \calH^1\LL (\gamma \cap B^k)$ for some $i_k\in \{1,
\dots, N\}$ (this is similar to the proof of Lemma
\ref{cellproblem}\ref{polygonal}, but here we take small balls to  ensure weak
convergence). 
By the definition of $\bar\psi$, for every $k$ we can find a
measure $\mu_k\in \MBCCOmega[B^k]$ with $\supp(\mu_k - b_{i_k}\otimes t_{i_k} 
\calH^1\LL (x_k+\R t_{i_k}\cap B^k)) \subset B^k$ such that
\begin{equation*}
E(\mu_k, B^k)\leq \diam(B^k)\bar\psi(b_{i_k}, t_{i_k}) +\frac{\eps}{2^k}\,.
\end{equation*}
Finally, define $\nu_\eps=\sum_k\mu_k$. We have
$$
E(\nu_\eps,\Omega) \leq \bar E(\mu,\Omega) +\eps\,
$$
and the desired recovery sequence is then obtained by letting $\eps\to 0$.
\end{proof}

\subsection{Proof of the lower bound}

In order
to prove the lower bound, we need to show that the boundary conditions in the definition of $\bar\psi$ can be substituted with an asymptotic condition. We start by working on a rectangle and showing that the energy
is concentrated on the line.
\begin{lemma}\label{lem:te}
Let $\psi$ and $E$ be as in Theorem \ref{theo:relaxation}. 
Given $b\in\Z^m$ and $t\in \Snmu$, we choose a rotation $Q_t\in\mathrm{SO}(n)$ with
$Q_te_1=t$ and for  $h,\ell>0$ we define  the parallelepiped
$R^t_{\ell,h}=Q_t\left[\left(-\frac{\ell}{2},\frac{\ell}{2}\right)
\times\left(-\frac{h}{2},\frac{h}{2}\right)^{n-1}\right]$ and the 
energy on the parallelepiped
\begin{multline}\label{trunc_ener}
\varphi(b,t,\ell,h)=\inf\Big\{\liminf_{k\to\infty} \frac1\ell
E(\mu_k,R^t_{\ell,h})\,:\mu_k\in\MBCCOmega[R^t_{\ell,h}],\\ 
\mu_k\stackrel{*}{\rightharpoonup}b\otimes t {\cal H}^1\LL(\R t\cap
R^t_{\ell,h})\Big\}\,.
\end{multline}
Then $\varphi$ does not depend on $\ell$ and $h$. We write
$\varphi(b,t,\ell,h)=\varphi(b,t)$. 
\end{lemma}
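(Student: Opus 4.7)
The plan is to combine isotropic scale invariance with two monotonicities coming from restriction, and then conclude with a density-of-rationals argument. First, under the dilation $T_\lambda(x)=\lambda x$, an admissible sequence for $R^t_{\ell,h}$ pushes forward to an admissible sequence for $R^t_{\lambda\ell,\lambda h}$, and the identity $\frac{1}{\lambda\ell}E((T_\lambda)_\sharp\mu_k, R^t_{\lambda\ell,\lambda h})=\frac{1}{\ell}E(\mu_k,R^t_{\ell,h})$ gives $\varphi(b,t,\ell,h)=\varphi(b,t,\lambda\ell,\lambda h)$. Hence $\varphi$ depends only on the ratio $h/\ell$.

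I would then show $\varphi(b,t,\ell,\cdot)$ is non-decreasing in $h$ by restriction. Given $h'\geq h$ and an admissible sequence $\mu_k$ for $R^t_{\ell,h'}$, the restriction $\mu_k\LL R^t_{\ell,h}$ is divergence-free on $R^t_{\ell,h}$ since any test function compactly supported in the smaller rectangle is automatically compactly supported in the larger one; it weakly converges to the same line segment of length $\ell$, and its energy cannot increase.

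The opposite bound comes from a pigeonhole restriction in the $t$-direction. Fix an integer $N\geq 1$, take an admissible $\mu_k$ for $R^t_{N\ell,h}$, and partition that rectangle into $N$ disjoint translates of $R^t_{\ell,h}$ stacked along $t$. For each $k$ at least one translate carries energy at most $\frac{1}{N}E(\mu_k,R^t_{N\ell,h})$; passing to a subsequence we may assume the same translate works for every $k$. The restriction there is admissible (after translation, which leaves $\varphi$ unchanged) for $R^t_{\ell,h}$, yielding $\varphi(b,t,\ell,h)\leq\varphi(b,t,N\ell,h)$; scale invariance converts this to $\varphi(b,t,\ell,h)\leq\varphi(b,t,\ell,h/N)$.

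Combining the two inequalities gives $\varphi(b,t,\ell,h)=\varphi(b,t,\ell,h/N)$ for every $N\in\N$, and therefore $\varphi(b,t,\ell,h)=\varphi(b,t,\ell,qh)$ for every rational $q>0$. Since $\mathbb{Q}_+\cdot h$ is dense in $(0,\infty)$ and $h\mapsto\varphi(b,t,\ell,h)$ is monotone, inserting a rational multiple of $h_1$ between any $h_1<h_2$ squeezes $\varphi(b,t,\ell,h_1)=\varphi(b,t,\ell,h_2)$; scale invariance then removes the $\ell$-dependence as well. The only delicate point is verifying admissibility of the restricted sequences, but both divergence-freeness and the correct weak limit are preserved because divergence-freeness is imposed only on the open interior of the parallelepiped, so mass may cross the lateral boundary of a sub-rectangle freely without violating the constraint.
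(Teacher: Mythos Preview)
Your argument is correct and matches the paper's proof essentially step for step: the paper also uses the scaling identity $\phi(\ell,h)=\phi(\lambda\ell,\lambda h)$, the monotonicity $\phi(\ell,h)\le\phi(\ell,H)$ for $h\le H$, and the pigeonhole inequality $\phi(\ell/p,h)\le\phi(\ell,h)$, combining them into the chain $\phi(\ell/p,h)\le\phi(\ell,h)=\phi(\ell/p,h/p)\le\phi(\ell/p,h)$. You spell out the concluding density-plus-monotonicity step more explicitly than the paper does, but the logic is identical.
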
 
\begin{proof}
The  statement is obtained through the following remarks.
We work here at fixed $b$ and $t$ and write for simplicity
$\phi(\ell,h)=\varphi(b,t,\ell,h)$. 
\begin{enumerate}
\item With a rescaling argument we get that
\begin{equation}\label{e_homog}
\phi(\ell,h)=\phi(\lambda \ell,\lambda h)\quad\forall\,\lambda>0\,.
\end{equation}
\item It is also immediate to notice that
\begin{equation}\label{e_incr}
\phi(\ell,h)\le \phi(\ell,H)\quad{\rm whenever}\ h\le H\,,
\end{equation}
by definition.
\item Moreover
\begin{equation}\label{e_sel}
\phi\left(\frac{\ell}{p},h\right)\le \phi(\ell,h)
\quad\forall\,p\in\N\setminus\{0\}
\end{equation}
by a selection argument. For example, if $p=2$, then \eqref{e_sel} is obtained
choosing for each $k$ the half of $R^t_{\ell,h}$ with energy less than
$\frac{1}{2} E(\mu_k,R^t_{\ell,h})$.  
\end{enumerate}
Thus our claim is proved, because by the previous three steps we have, for all
$h,\ell>0$ and all $p\in\N\setminus\{0\}$,
\begin{equation*}
\phi\left(\frac{\ell}{p},h\right)
\le\phi(\ell,h)
=\phi\left(\frac{\ell}{p},\frac{h}{p}\right)
\le \phi\left(\frac{\ell}{p},h\right)
\end{equation*}
hence equality holds throughout.
\end{proof}

The next lemma shows that $\varphi$, which was defined using weak convergence
instead of  boundary
values, is the same as  $\bar\psi$. This is the key step in which we show that
the natural upper and lower bounds coincide. 
\begin{lemma}
\label{lemma:boundary}
Let $\psi$, $\bar\psi$ and $\bar E$ be as in Theorem \ref{theo:relaxation},
 $\varphi$ as in Lemma \ref{lem:te}.
Then we have:
\begin{enumerate}
\item\label{cellproblemprojection} For every sequence
$ \mu_k\in\MBCCOmega[B_{1/2}]$ with $\mu_k \weakstarto \mu=b\otimes t
\calH^1\LL (\R t\cap B_{1/2})$ weakly in $\MBCCOmega[B_{1/2}]$ 
 one has
\begin{equation*}
\varphi(b,t)\le
 \liminf_{k\to\infty}  \bar E(\mu_k,  B_{1/2})\,.
\end{equation*}
\item\label{psieqvarphi} $\bar\psi(b,t)=\varphi(b,t)$.
\end{enumerate}
\end{lemma}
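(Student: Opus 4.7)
The plan is to prove \ref{cellproblemprojection} by passing from $\bar E$ to $E$ using the already-established upper bound of Theorem~\ref{theo:relaxation} and then restricting to an inscribed parallelepiped on which Lemma~\ref{lem:te} applies; for \ref{psieqvarphi}, one direction is immediate from \ref{cellproblemprojection} applied to a constant sequence, while the reverse direction relies on a slicing-plus-bridge construction that converts the weak boundary behaviour defining $\varphi$ into the rigid support condition defining $\bar\psi$.

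For \ref{cellproblemprojection}, assume the right-hand side is finite and pass to a suitable subsequence. The upper bound of Theorem~\ref{theo:relaxation} supplies, for each $k$, a recovery sequence $\nu_{k,j}\weakstarto\mu_k$ with $\limsup_j E(\nu_{k,j},B_{1/2})\le\bar E(\mu_k,B_{1/2})$, and a diagonal extraction yields $\nu_k\in\MBCCOmega[B_{1/2}]$ with $\nu_k\weakstarto b\otimes t\,\calH^1\LL(\R t\cap B_{1/2})$ and $\liminf_k E(\nu_k,B_{1/2})\le\liminf_k\bar E(\mu_k,B_{1/2})$. For any $\ell<1$, choose $h$ small enough that $R^t_{\ell,h}\subset B_{1/2}$. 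The restriction $\nu_k\LL R^t_{\ell,h}$ lies in $\MBCCOmega[R^t_{\ell,h}]$, and since the limit carries no mass on $\partial R^t_{\ell,h}$, it converges weakly to $b\otimes t\,\calH^1\LL(\R t\cap R^t_{\ell,h})$ on $R^t_{\ell,h}$. By Lemma~\ref{lem:te}, $\varphi(b,t)=\varphi(b,t,\ell,h)\le\ell^{-1}\liminf_k E(\nu_k,R^t_{\ell,h})\le\ell^{-1}\liminf_k\bar E(\mu_k,B_{1/2})$, and letting $\ell\to 1$ concludes.

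For \ref{psieqvarphi}, the inequality $\varphi(b,t)\le\bar\psi(b,t)$ follows at once from \ref{cellproblemprojection} applied to the constant sequence $\mu_k\equiv b\otimes t\,\calH^1\LL(\R t\cap B_{1/2})$, whose $\bar E$-energy is $\bar\psi(b,t)$. For the reverse inequality, fix $\delta>0$, set $\ell=1-4\delta$, take $h$ so small that $R:=R^t_{\ell,h}\subset B_{1/2}$, and pick $\mu_k\in\MBCCOmega[R]$ with $\mu_k\weakstarto b\otimes t\,\calH^1\LL(\R t\cap R)$ and $\ell^{-1}E(\mu_k,R)\to\varphi(b,t)$. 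I would construct a competitor $\tilde\mu_k\in\MBCCOmega[B_{1/2}]$ for $\bar\psi(b,t)$ by combining: the restriction $\mu_k\LL R'$ on a slightly smaller box $R'\subset\subset R$; short bridges joining each point of the boundary trace of $\mu_k$ on $\partial R'$ to the nearest axis point on $\R t$; and the straight segment $b\otimes t\,\calH^1\LL(\R t\cap(B_{1/2}\setminus R'))$. Using the slicing theory of currents together with Fubini, on the two end faces and on each pair of side faces of $R'$ one can choose slicing levels for which the boundary trace of $\mu_k$ is a 0-current of total variation at most $C/\delta$ uniformly in $k$, with every trace point lying within $O(h)$ of the axis. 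The bridges, of length $O(h)$ and carrying the corresponding slice multiplicities, can be routed so that the identity $\sum_i\theta_i=0$, a consequence of $\partial\mu_k=0$ in $R$, makes the assembled $\tilde\mu_k$ divergence-free on $B_{1/2}$ and equal to $b\otimes t\,\calH^1\LL\R t$ in a neighbourhood of $\partial B_{1/2}$. Bookkeeping yields $\bar\psi(b,t)\le E(\tilde\mu_k,B_{1/2})\le\ell\,\varphi(b,t)+O(\delta)+O(h/\delta)$; taking $h\le\delta^2$ and then $\delta\to 0$ closes the argument.

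The main obstacle is the reverse direction of \ref{psieqvarphi}: replacing a merely weakly converging sequence by one with the exact boundary-line behaviour at a cost that vanishes with $\delta$ and $k$. Slicing must be performed in every direction to control all faces of $R'$, and the bridge multiplicities have to be chosen node-by-node so that their sum reconstitutes the single line multiplicity $b$ near $\partial B_{1/2}$; it is the closedness relation $\sum_i\theta_i=0$ inherited from $\partial\mu_k=0$ that makes this matching combinatorially possible.
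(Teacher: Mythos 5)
Part \ref{cellproblemprojection} of your argument is essentially the paper's proof: pass from $\bar E$ to $E$ via the already-proved upper bound and a diagonal extraction, restrict to an inscribed parallelepiped $R^t_{\ell,h}$, invoke Lemma~\ref{lem:te}, and let $\ell\to1$. This is fine, including the observation that the restriction of a divergence-free measure to an open sub-box is divergence-free there. For the direction $\varphi\le\bar\psi$ of part \ref{psieqvarphi}, your application of \ref{cellproblemprojection} to the constant sequence is a clean shortcut compared to the paper's periodic-rescaling construction and is also correct (note $\bar E(\mu,B_{1/2})=\bar\psi(b,t)$ because the diameter has length $1$).

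The gap is in the reverse direction $\bar\psi\le\varphi$. Your slicing-plus-bridge construction does not, as described, produce a divergence-free competitor. After restricting $\mu_k$ to $R'$ and adding bridges from each trace point $x_i$ to its orthogonal projection $\pi(x_i)$ on $\R t$, the assembled current has residual boundary along $\R t$: at the interior axis point $\pi(x_i)$ for every side-face trace $x_i$, and at $p_\pm=\pm(\ell'/2)t$ with the net flux through the end faces. The identity $\sum_i\theta_i=0$ only says the total of all these vanishes; it does \emph{not} say the side-face fluxes vanish, nor that the end-face fluxes equal $\pm b$. Neither is automatic. If the end-face flux is some $\theta'_k\ne b$, the straight segment $b\otimes t\,\calH^1\LL\big(\R t\cap(B_{1/2}\setminus R')\big)$ cannot be glued on to form a closed current; and the interior Diracs on $\R t$ must be cancelled by axial pieces whose mass is not obviously $o(1)$ (a priori each one may have to travel a length of order $\ell$). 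What is missing is precisely the mechanism by which the paper avoids this: using the structure theorem (Theorem~\ref{thm:structure}) to decompose $\mu_k$ into Lipschitz pieces, keeping only those that meet $R^t_{1,h}$, and observing that (because the discarded mass is $o(1)$ by \eqref{eqnoenergyoutsideh}) the retained pieces lie in $R^t_{1,2h}$ and therefore can reach $\partial R^t_{1,1}$ only through the two end faces. The squeeze map $f^t$ then collapses the end strips to the two points $\pm\frac12 t$, so that the truncated measure has boundary $\theta'(\delta_{\frac12 t}-\delta_{-\frac12 t})$ with a single $\theta'$, and $\theta'=b$ is read off from the weak limit and closedness. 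Your approach needs an analogous step, either a truncation to pieces with no side-face exit, or a quantitative argument that the side-face flux and the deviation of the end-face flux from $b$ both tend to zero with $k$; as written the claim ``$\sum_i\theta_i=0$ makes the matching combinatorially possible'' is not sufficient.
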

\begin{proof}
  \ref{cellproblemprojection}: 
We can assume the liminf to be a limit and to be finite. We first pass from
$\bar E$ to $E$ on the right-hand side.
By the upper bound proven in the previous section, for every $k$ there is a
sequence $\nu^{(k)}_h\weakstarto \mu_k$  in $\MBCCOmega[B_{1/2}]$ such that
\begin{equation*}
 \limsup_{h\to\infty} E(\nu^{(k)}_h, B_{1/2}) \le   \bar E(\mu_k, B_{1/2}) \,.
\end{equation*}
Since the weak convergence is metrizable on bounded sets we can take a
diagonal subsequence and obtain a sequence
$\tilde\mu_k$ which converges weakly to $\mu$ in $\MBCCOmega[B_{1/2}]$, with
\begin{equation*}
  \lim_{k\to\infty} E(\tilde\mu_k, B_{1/2})\le
  \lim_{k\to\infty}  \bar E(\mu_k, B_{1/2})\,.
\end{equation*}
Therefore it suffices to  show that $\varphi(b,t)\le   \liminf_{k\to\infty} E(\tilde\mu_k,
B_{1/2})$.

We fix $\ell\in (0,1)$ and then choose $h\ll 1$ such that
$R^t_{\ell,h}\subset B_{1/2}$. Then $E(\tilde\mu_k, R^t_{\ell,h})\le E(\tilde\mu_k,
B_{1/2})$ and, using Lemma \ref{lem:te},
\begin{equation*}
  \ell\varphi(b,t) \le \liminf_{k\to\infty}
E(\tilde\mu_k, R^t_{\ell,h})\le 
 \liminf_{k\to\infty} E(\tilde\mu_k, B_{1/2})\,.
\end{equation*}
Since $\ell\in(0,1)$ was arbitrary, the proof is concluded.

\noindent\ref{psieqvarphi}:
We choose $b\in\Z^m$, $t\in\Snmu$, and set $\mu=b\otimes t \calH^1\LL(\R t\cap
R^t_{1,1})$.
We start by defining  a version of
$\bar\psi$ where the ball is replaced by a cube,
\begin{alignat*}1
\tilde\psi(b,t)=&\inf\Big\{ E(\tilde\mu, R^t_{1,1})
:\ \tilde\mu\in \MBCC(R^t_{1,1})\,,\,\,
 \supp(\tilde\mu -\mu) \subset R^t_{1,1}\Big\}.
\end{alignat*}
It  suffices to show that $\varphi\le \bar\psi$, 
$\bar\psi\le \tilde\psi$ and 
$\tilde\psi\le\varphi$.

To prove  $\varphi\le \bar\psi$ let $\mu^*=\theta^*\otimes \tau^*\calH^1\LL
\gamma^*$ be one of the measures entering (\ref{relax}). We fill $R^t_{1,1}$ by
$2k+1$ scaled-down copies of $\mu^*$, for all $k\in\N$. Precisely, let $f^k_j(x)=\frac{1}{(2k+1)}
(x+jt)$ and set $\mu^k=\sum_{j=-k}^k (f^k_j)_\sharp \mu^*$. 
Since $Df^k_j=\frac1{2k+1}\Id$, 
for any test function $\varphi\in C^0_c(\R^n)$ we have
\begin{equation*}
\int \varphi d [(f^k_j)_\sharp
\mu^*] = \frac1{2k+1} \int (\varphi\circ f^k_j)  d\mu^*  
= \frac1{2k+1} \int \varphi\left(\frac{jt+x}{2k+1} \right)
d\mu^*(x)\,.
\end{equation*}
Then $\mu^k\in
\MBCCOmega[R_{1,1}]$, 
$\mu^k\weakstarto \mu$, and $E(\mu^k,R^t_{1,1}) = E(\mu^*, B_{1/2})$. Since $\mu^*$
was arbitrary, we obtain $\varphi\le \bar\psi$.

By covering most of the diameter of
$B_{1/2}$ with small squares one can easily
see that   $\bar\psi\le \tilde\psi$. 

We now show  $\tilde\psi\le\varphi $.
Choose a sequence  $\mu_k\stackrel{*}{\rightharpoonup}\mu$ in
$\MBCCOmega[R^t_{1,1}]$ such that
\begin{equation}\label{efi}
  \lim_{k\to\infty} E(\mu_k, R^t_{1,1})=\varphi(b,t)\,.
\end{equation}
By Lemma \ref{lem:te},
for any $h\in (0,1)$ we have
\begin{equation*}
\varphi(b,t)\le  \liminf_{k\to\infty} E(\mu_k, R^t_{1,h}) \,.
\end{equation*}
In particular,
\begin{equation}\label{eqnoenergyoutsideh}
  \limsup_{k\to\infty} E(\mu_k, R^t_{1,1}\setminus R^t_{1,h})=0\,.
\end{equation}
By the structure theorem (Th. \ref{thm:structure}) the measure $\mu_k$ has the form
$\sum_i \theta_{k,i}\otimes \tau_{k,i}\calH^1\LL\gamma_{k,i}$, with 
$\theta_{k,i}\in\Z^m $ and $\gamma_{k,i}$ Lipschitz curves, each either closed 
or with endpoints in $\partial R^t_{1,1}$.
We denote by $J_k$ the set of $i$ for which the curve $\gamma_{k,i}$ intersects
 $R^t_{1,h}$, and we define
 $\gamma_k^{\circ}=\cup_{i\in J_k} \gamma_{k,i}$ and
 $\mu_k^\circ=\sum_{i\in J_k}\theta_{k,i}\otimes \tau_{k,i}\calH^1\LL\gamma_{k,i}$.
 By construction $\partial \mu_k^\circ=0$.
By (\ref{eqnoenergyoutsideh}) we have
\[
{\cal H}^1\left(\gamma_k\cap R^t_{1,1}\setminus R^t_{1,h}\right)\longrightarrow
0\quad{\rm as}\ k\to\infty\,, 
\]
therefore $\gamma_k^{\circ}\subset R^t_{1,2h}$ for $k$ sufficiently large.
In summary, we have constructed a new sequence of vector-valued measures $\mu_k^\circ$ which satisfies
\[
\mu_k^{\circ}\stackrel{*}{\rightharpoonup}\mu
\]
with ${\rm supp}\mu_k^{\circ}\subset R^t_{1,2h}$ and $\partial \mu_k^{\circ}
=0$ in $R^t_{1,1}$ (see Figure \ref{fig:cp2}). 

 As a consequence of the definition of the truncated energy in Lemma
 \ref{lem:te} we get 
\[
 (1-2h)\varphi\le
\liminf_{k\to\infty} E(\mu_k^{\circ},R^t_{1-2h,2h})\,,
\]
thus  the endstripes  $S^t_h=R^t_{1,2h}\setminus R^t_{1-2h,2h}$ contain little
energy, in the sense that
\begin{equation}\label{limsupfi}
\limsup_{k\to\infty} E(\mu_k^{\circ}, S^t_h)\le 2h\varphi\,.
\end{equation}

\begin{figure}[t]
\begin{center}
 \includegraphics[width=\linewidth]{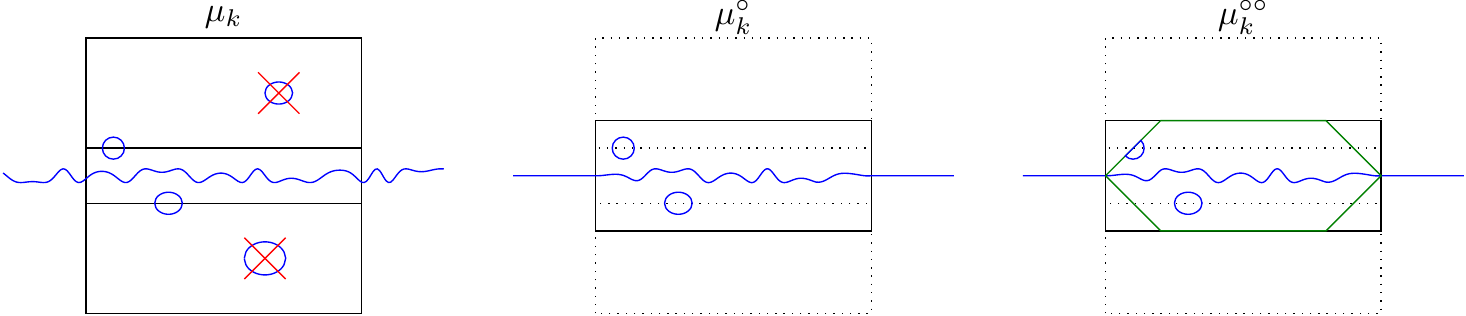} 
\end{center}
\caption{Passing from $\mu_k$ to $\mu_k^{\circ\circ}$.
The squares represent $R_{1,1}^t$, the rectangles $R_{1,h}^t$ and $R_{1,2h}^t$.
}
\label{fig:cp2} 
\end{figure}

As we drew in Figure \ref{fig:cp2}, we head to the conclusion squeezing the
measure $\mu_k^{\circ}$  through the projection $f^t:R^t_{1,2h}\to
R^t_{1,2h}$, defined by  $f^t(x)=x$ for $x\in R^t_{1-2h,2h}$ and $f^t(x)=Q_t f(Q_t^{-1}x)$ 
in $S^t_h$, where $Q_t$ is a rotation such that $Q_te_1=t$ and $f$ is defined as
\[
f(x_1,x')=\left(x_1,\left(\frac{1}{2h}-\frac{1}{h}|x_1|\right)x'\right)
\text{ for }x=(x_1,x')\in S^{e_1}_{h}\,.
\]
Let us define
\[
\mu_k^{\circ\circ}=f_\sharp^t (\mu_k^{\circ}).
\]
Thus
\[
E(\mu_k^{\circ\circ},S^t_h)\le \cc E(\mu_k^{\circ},S^t_h),
\]
and therefore by \eqref{efi} and \eqref{limsupfi}
\begin{equation}\label{quasiconcl}
  \limsup_{k\to\infty} E(\mu_k^{\circ\circ},R^t_{1,2h})\le \varphi+ ch \varphi\,.
\end{equation}

Finally we deal with the
boundary. By the definition of  $\mu_k^{\circ\circ}$,
\begin{equation}\label{duedelta}
\partial \mu_k^{\circ\circ}=\theta'\left(\delta_{1/2 e_1}-\delta_{-1/2 e_1}\right)\,.
\end{equation}
The measure
\[
\mu_k^{\circ\circ\circ}=\mu_k^{\circ\circ}+\theta'\otimes t {\cal H}^1\LL(\R e_1\setminus R^t_{1,h})
\] 
satisfies $\partial \mu_k^{\circ\circ\circ}=0$, but, at the same time,
\[
\mu_k^{\circ\circ\circ}\stackrel{*}{\rightharpoonup} b\otimes t {\cal
  H}^1\LL(\R e_1\cap R^t_{1,h})+\theta'\otimes t {\cal H}^1\LL(\R e_1\setminus
R^t_{1,h})\,, 
\]
thus $\theta'=b$. Thus, \eqref{duedelta} together with \eqref{quasiconcl}
implies $\tilde\psi\leq \varphi$.
\end{proof}

We are now ready for proving the lower bound.
\begin{proof}[Proof of the lower bound in Theorem \ref{theo:relaxation}.]
Fix  $\mu\in \MBCCOmega$ and consider a sequence $\mu_k\weakstarto \mu$.
Since $\bar E\le E$, it suffices to prove that
\begin{equation*}
  \bar E(\mu,\Omega)\le \liminf_{k\to\infty} \bar E(\mu_k,\Omega)
\end{equation*}
 (this means, it suffices
 to show that $\bar E$ is lower semicontinuous).
Passing to a subsequence we can assume that the sequence $\bar E(\mu_k,\Omega)$ converges. 
We can assume that  the limit is finite, and therefore that
$\sup_k|\mu_k|(\Omega)<\infty$. 
We extend each of the measures $\mu_k$ to $\Ext\mu_k\in \MBCCOmega[\R^n]$ using
Lemma \ref{lemma:extension}.
The sequence $\Ext\mu_k$  is uniformly bounded, extracting a subsequence we can
assume that $\Ext\mu_k$ has a weak limit, which is automatically an extension of
$\mu$.  With a slight abuse of notation we denote the limit by $\Ext\mu$.
We identify $\Ext\mu$ and
$\Ext\mu_k$ with the corresponding closed currents $T, T_k\in \calR_1(\R^n;\Z^m)$.

We fix $\eps>0$ and apply the Deformation Theorem to $\Ext\mu$
(Theorem~\ref{theo:density}). Let $f$ and $P$ be the 
resulting $C^1$ bi-Lipschitz map and polyhedral measure  such that
\begin{equation*}
\|f_\sharp \Ext\mu-P\|<\eps \text{ and } |f(x)-x|+|\nabla f(x)-\Id|<\eps\,.
\end{equation*}
We define   
\begin{equation*}
\tilde \mu_k = f_\sharp (\Ext\mu_k - \Ext\mu)+P=f_\sharp \Ext\mu_k - (f_\sharp
\Ext\mu-P)\,. 
\end{equation*}
Clearly $\partial 
\tilde \mu_k=0$; from $\Ext\mu_k\weakstarto \Ext\mu$ we deduce $\tilde \mu_k
\weakstarto 
P$. From Lemma \ref{lemmaenergycontinuous} 
we get, for $\omega_\eps=\{x\in\Omega: \dist(x,\partial\Omega)>\eps\}$,
\begin{equation}\label{approxlower}
\bar E(\tilde \mu_k,\omega_\eps) \le
 (1+\cc \|Df-\Id\|_{L^\infty}) \bar E(\mu_k,\Omega) +\cc  \|f_\sharp\Ext\mu-P\|\,.
\end{equation}
Since $P$ is  polyhedral, we can find finitely many disjoint balls
$B_i=B(x_i,r_i)\subset\omega_\eps$ such that $P\LL B_i=b_i\otimes t_i
\calH^1\LL (x_i+t_i \R\cap B_i)$ and $|P|(\omega_{2\eps}\setminus \cup B_i)\le \eps$. 
For each ball, by 
Lemma \ref{lemma:boundary}, we have
\begin{equation*}
  \bar E(P, B_i)=2r_i \bar\psi(b_i, t_i)\le \liminf_{k\to\infty} 
  \bar E(\tilde \mu_k, B_i)\,.
\end{equation*}
Summing over the balls we conclude that
\begin{equation*}
  \bar E(P, \omega_{2\eps}) \le \sum_i   \bar E(P, B_i) + \cc  
|P|(\omega_{2\eps}\setminus \cup B_i) 
\le \liminf_{k\to\infty} \bar E(\tilde \mu_k, \omega_\eps) + \cc   \eps.
\end{equation*}
By \eqref{approxlower} we then get
\begin{equation*}
  \bar E(P, \omega_{2\eps})\le (1+c\eps) \liminf_{k\to\infty} \bar
  E(\mu_k,\Omega)+\cc\eps. 
\end{equation*}
Since another application of Lemma \ref{lemmaenergycontinuous} gives
\begin{equation*}
  \bar E(\mu,\omega_{3\eps}) \le \bar E(P,\omega_{2\eps})(1 + \cc \eps) + \cc \eps\,,
\end{equation*}
the conclusion follows 
by the arbitrariness of $\eps$. 
\end{proof}

\section{Explicit relaxation for dislocations in cubic crystals}
\label{secexample}
We consider here the energy density $\psi: \Z^n\times \Snmu\to\R$
\begin{equation}\label{eqdefpsi}
  \psi(b,t)=|b|^2+\eta(b\cdot t)^2  
\end{equation}
which arises in the modeling of dislocations in crystals. Focusing on the case
$\eta\in[0,1]$ which arose in previous works 
\cite{Gar_Muel,CacaceGarroni2009,Con_Gar_Muel}, we  determine here the 
 relaxation $\bar\psi(b,t)$ for the (most relevant) small values of $b$
and in particular show that complex res may arise, in which different
values of $b$ and of $t$ interact. 

\subsection{Line-energy of dislocations}
A dislocation is a line singularity in a crystal which may be described by a
divergence-free measure of the form $\theta\otimes \tau\calH^1\LL\gamma$, as
studied in the previous sections, where $\theta$ physically represents the
components of the Burgers vector in a lattice basis \cite{HirthLothe1968,HullBacon}.
In the case that dislocations are restricted to a plane,
$\gamma\subset \R^2\times \{0\}$ and $\theta\in \Z^2$, 
a model of this form was derived from linear three-dimensional elasticity in
\cite{Gar_Muel,Con_Gar_Muel} using the tools of $\Gamma$-convergence, building
mathematically upon the concept of $BV$-elliptic envelope and physically 
upon a generalization of the Peierls-Nabarro model introduced by 
Koslowski, Cuiti{\~n}o and Ortiz \cite{Kos_Cui_Or,KoslowskiOrtiz2004}.
One key observation was that the (unrelaxed) energy per unit length of a
dislocation is given by a specific function
$\psi^c(b,t)$, which can be computed from the elastic constants of the
solid. 
Assuming a cubic kinematics for the dislocations and isotropic elastic constants
and writing  $t=(\cos\alpha,\sin\alpha)\in
\Suno$, the energy density takes the form
(see \cite[Eq. (51)]{CacaceGarroni2009} or \cite[Eq. (1.8)]{Con_Gar_Muel}), 
\[
\psi^c(b,t)=\frac{\mu a_0^2}{4\pi(1-\nu)}\,b\left(
\begin{array}{cc}
2-2\nu\cos^2\alpha & -2\nu\sin\alpha\cos\alpha\\
-2\nu\sin\alpha\cos\alpha   & 2-2\nu\sin^2\alpha
\end{array}
\right)b\,,
\]
where the parameter $\nu\in[-1,1/2]$ represents the material's Poisson ratio,
$\mu$ the shear modulus of the crystal, $a_0$ the length of the Burgers vector
(i.e., the lattice spacing).
Straightforward manipulations permit to rewrite this expression as
\begin{eqnarray}
\psi^c(b,t)
&=&\frac{\mu a_0^2}{4\pi(1-\nu)}\left(2(1-\nu)|b|^2+2\nu(b^\perp\cdot
  t)^2\right)=\frac{\mu a_0^2}{2\pi}\psi(b^\perp,t)\,,\label{c20} 
\end{eqnarray}
where $\psi$ was defined in (\ref{eqdefpsi}), 
$\eta=\frac{\nu}{1-\nu}\le 1$, and $b^\perp=(-b_2,b_1)$.
Without loss of generality we can assume $\eta\in[0,1]$: indeed, if $\nu<0$, we can rewrite \eqref{c20} as
$\psi^c(b,t)=\frac{\mu a_0^2}{2\pi(1-\nu)}\psi'(b,t)$
where $\psi'(b,t)=|b|^2+\eta'(b\cdot t)^2$ contains the 
constant $\eta'=-\nu\in[0,1]$. 

The expression  (\ref{eqdefpsi}) is invariant under rotations, and indeed 
the above discussion can be immediately generalized to the three-dimensional
case, resulting (at least in the somewhat academic case $\nu<0$) in the same
formula, see, e.g., \cite[Sect. 4.4]{HullBacon} or
\cite[Eq. (51)]{Kos_Cui_Or}. 

\subsection{Lower bound on the relaxation}
We now start the analysis 
of the energy density (\ref{eqdefpsi}).  The key idea is to decompose the
set  $\gamma$ on which the measure is concentrated into sets on which $\theta$ is
constant. Each component is then replaced by a segment with the same 
end-to-end span, an operation which by convexity does not increase the energy
(here we use Lemma \ref{lemmatildepsisubadditive} below). 
This involves an implicit rearrangement, which one can expect 
to be sharp since $\gamma$ is one dimensional. In a second step we show that only
small multiplicities are relevant in the definition of the relaxation, due to the
quadratic growth of $\psi$ (here we use Lemma \ref{lemmapsiinequalities} below).
A similar procedure is also helpful to characterize the relaxation in a
total-variation model for the reconstruction of optical flow in image
processing \cite{CoGiRu}.
\begin{proposition}\label{nnove}
Let $\eta\in [0,1]$, $\psi$ be as in (\ref{eqdefpsi}).
For $n\le 9$ its $\calH^1$-elliptic envelope obeys
\begin{equation}\label{eqlbtalpha}
\bar\psi(b,t)\ge \min\left\{\!\sum_{\alpha\in \{-1,0,1\}^n} \!\!\!\!\psie(\alpha, T_\alpha):
T\in \R^{n3^n}\,,
\!\!\!\!\sum_{\alpha\in \{-1,0,1\}^n} \!\!\!\!\alpha\otimes T_\alpha = b\otimes t\right\},
\end{equation}
where $\psie$ denotes the positively one-homogeneous extension of $\psi$,
\begin{equation}\label{eqdeftildepsi}
  \psie(b,t)=|t|\psi\left(b,\frac{t}{|t|}\right)\,.
\end{equation}
For $n\ge 10$ equation (\ref{eqlbtalpha}) holds with  $T\in\R^{n(4n+1)^n}$ and
 both sums  running over all $\alpha$ in $[-2n,2n]^n\cap \Z^n$.
\end{proposition}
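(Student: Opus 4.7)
The plan is to take an arbitrary admissible competitor $\mu=\theta\otimes\tau\calH^1\LL\gamma$ for $\bar\psi(b,t)$ and manufacture a family $\{T_\alpha\}_{\alpha\in\mathcal{A}}$, with $\mathcal{A}=\{-1,0,1\}^n$ for $n\le 9$ and $\mathcal{A}=[-2n,2n]^n\cap\Z^n$ otherwise, such that $\sum_\alpha\alpha\otimes T_\alpha=b\otimes t$ and $E(\mu)\ge\sum_\alpha\psie(\alpha,T_\alpha)$; infimizing over $\mu$ then gives the proposition. A preliminary step, independent of the specific form of $\psi$, is a matrix-valued mass balance: by testing the closedness of $\mu$ against vector fields of the form $\varphi(x)=e_\ell\,x_j\eta(x)$, with $\eta\in C^\infty_c(B_{1/2})$ equal to $1$ on a ball containing $\supp(\mu-b\otimes t\calH^1\LL\R t)$, and integrating by parts (the boundary of $\eta$ intercepts only the explicit boundary datum), one deduces
\[
\int_\gamma\theta\otimes\tau\,d\calH^1=b\otimes t.
\]

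Following the outline preceding the statement, I would stratify $\gamma$ by the (at most countable) values of $\theta$: set $E_\beta=\{\theta=\beta\}$ for $\beta\in\Z^n$ and $S_\beta=\int_{E_\beta}\tau\,d\calH^1\in\R^n$. Jensen's inequality, applied to the convex positively one-homogeneous function $\psie(\beta,\cdot)$ — this convexity being precisely the content of Lemma~\ref{lemmatildepsisubadditive} — yields, for every $\beta$,
\[
\int_{E_\beta}\psi(\beta,\tau)\,d\calH^1=\int_{E_\beta}\psie(\beta,\tau)\,d\calH^1\ge\psie(\beta,S_\beta),
\]
so that $E(\mu)\ge\sum_\beta\psie(\beta,S_\beta)$, with the mass balance translating into the constraint $\sum_\beta\beta\otimes S_\beta=b\otimes t$. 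This already gives a lower bound, but over the infinite alphabet $\Z^n$ rather than $\mathcal{A}$.

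To compress the alphabet down to $\mathcal{A}$, I would invoke Lemma~\ref{lemmapsiinequalities}, which for every $\beta\in\Z^n$ supplies non-negative integers $n_\alpha^\beta$ ($\alpha\in\mathcal{A}$) with $\beta=\sum_\alpha n_\alpha^\beta\alpha$ and the pointwise bound $\psi(\beta,t)\ge\sum_\alpha n_\alpha^\beta\psi(\alpha,t)$ for every $t\in\Snmu$; by one-homogeneity this lifts to $\psie(\beta,T)\ge\sum_\alpha n_\alpha^\beta\psie(\alpha,T)$ for every $T\in\R^n$. Setting $T_\alpha=\sum_\beta n_\alpha^\beta S_\beta$ one then has $\sum_\alpha\alpha\otimes T_\alpha=\sum_\beta\beta\otimes S_\beta=b\otimes t$, and
\[
E(\mu)\ge\sum_\beta\psie(\beta,S_\beta)\ge\sum_{\alpha,\beta}\psie(\alpha,n_\alpha^\beta S_\beta)\ge\sum_\alpha\psie(\alpha,T_\alpha),
\]
where the final inequality is the subadditivity of $\psie(\alpha,\cdot)$, equivalent via one-homogeneity to the convexity in Lemma~\ref{lemmatildepsisubadditive}.

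The main obstacle is Lemma~\ref{lemmapsiinequalities}: producing, for every integer vector $\beta$, a $\Z_{\ge0}$-decomposition into atoms of $\mathcal{A}$ that beats $\beta$ itself pointwise in $t$. Writing $\psi(\beta,t)=\beta^\top(I+\eta tt^\top)\beta$, the inequality $\psi(\beta,t)\ge\sum_\alpha n_\alpha^\beta\psi(\alpha,t)$ reduces to the spectral condition $\operatorname{tr}(X)+\eta\lambda_{\min}(X)\ge 0$ on the matrix $X=\beta\beta^\top-\sum_\alpha n_\alpha^\beta\alpha\alpha^\top$, which must hold uniformly in $t$. It is this combinatorial estimate — together with the quadratic growth of $\psi$ — that dictates the threshold $n\le 9$, and forces the larger alphabet $[-2n,2n]^n\cap\Z^n$ when $n\ge 10$; the convexity/subadditivity of $\psie(\alpha,\cdot)$ claimed in Lemma~\ref{lemmatildepsisubadditive} is used essentially in two different places (Jensen and the final assembly) but is the easier of the two auxiliary facts.
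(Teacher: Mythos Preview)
Your proposal is correct and follows the paper's route: stratify $\gamma$ by the value of $\theta$, apply the subadditivity of $\psie$ (Lemma~\ref{lemmatildepsisubadditive}) to each stratum to pass to the vectors $S_\beta$, and then reduce large multiplicities to $\mathcal{A}$ via Lemma~\ref{lemmapsiinequalities}. The only difference is organizational---the paper iterates the one-step splitting of Lemma~\ref{lemmapsiinequalities} directly on the family $\{T_\alpha\}$ (setting $T'_{\alpha^*}=0$, $T'_a=T_a+T_{\alpha^*}$, $T'_{\alpha^*-a}=T_{\alpha^*-a}+T_{\alpha^*}$ and repeating), whereas you first package that iteration as a full decomposition $\beta=\sum_\alpha n_\alpha^\beta\alpha$ and then assemble; just be aware that Lemma~\ref{lemmapsiinequalities} as stated provides a single split and must be iterated to yield the decomposition you invoke.
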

\begin{proof}
  \begin{steplist}
\item We fix $b$ and $t$.
Let  $\mu=\theta\otimes\tau\calH^1\LL\gamma$ be any of the measures entering (\ref{relax}).
We decompose its support $\gamma$   depending on the value of $\theta$. 
For
any $\alpha\in \Z^n$ we set
\begin{equation*}
\gamma_\alpha=\{x\in\gamma:\,\theta(x)=\alpha\}\,.
\end{equation*} 
These countably many 1-rectifiable sets are pairwise disjoint and cover
$\gamma$. 
Since $\partial(\mu-b\otimes t\calH^1\LL(\R t\cap B_{\frac12}))=0$
we have 
\begin{equation*}
  b\otimes t = \int_\gamma \theta\otimes \tau d\calH^1 = \sum_{\alpha\in\Z^n} \alpha
  \otimes T_\alpha\,,
\end{equation*}
where we defined
\begin{equation*}
T_\alpha=\int_{\gamma_\alpha}\tau\,d\calH^1\,.
\end{equation*} 
An analogous decomposition of the energy gives
\begin{equation*}
E(\theta\otimes\tau\calH^1\LL\gamma)=\sum_\alpha\int_{\gamma_\alpha} \psi(\alpha,\tau)\,d\calH^1\ge \sum_\alpha \psie(\alpha, T_\alpha),
\end{equation*}
where in the second step we used
Lemma \ref{lemmatildepsisubadditive} below.
In particular, if the energy is finite then $\sum_\alpha |T_\alpha|<\infty$.

\item\label{stepthetasmall}  
Assume first $n\le 9$. Let $T:\Z^n\to \R^n$ be as above, 
 $\alpha^*\in\Z^n$ be such that $|\alpha^*_i|>1$ for some $i$ and
 $T_{\alpha^*}\ne0$. 
Let $a\in\Z^n$ be as in Lemma
 \ref{lemmapsiinequalities}\ref{lemmapsiinequalities1} below, so that
 \begin{equation*}
   \psie(\alpha^*-a, T_{\alpha^*})+   \psie(a, T_{\alpha^*})\le
   \psie(\alpha^*, T_{\alpha^*}) \,. 
 \end{equation*}
By the subadditivity in Lemma \ref{lemmatildepsisubadditive},
\begin{equation*}
  \psie(a,T_a+T_{\alpha^*})\le   \psie(a,T_{\alpha^*})+  \psie(a,T_{a})
\end{equation*}
and the same for $\alpha^*-a$.
We set $T'_{\alpha^*}=0$, $T'_a=T_a+T_{\alpha^*}$, $T'_{\alpha^*
-a} = T_{\alpha^*-a}+T_{\alpha^*}$, $T'_\alpha=T_\alpha$ for the other
values. Then $\sum_\alpha\alpha\otimes T'_\alpha=\sum_\alpha \alpha\otimes T_\alpha$ and
\begin{equation*}
  \sum_\alpha \psie(\alpha, T_\alpha')\le
  \sum_\alpha \psie(\alpha, T_\alpha)\,.
\end{equation*}
Let $M>2$.
Finitely many iterations of this step produce a $T^M$ with $T^M_\alpha=0$ for
all 
$\alpha$ with $\max_i|\alpha_i|\in [2,M]$. Taking the limit $M\to\infty$ 
gives a $T^\infty$ with $T^\infty_\alpha=0$ whenever 
$\max_i|\alpha_i|\ge 2$. This concludes the proof for $n\le 9$.

If $n\ge 9$ we use the same procedure with Lemma
\ref{lemmapsiinequalities}\ref{lemmapsiinequalities1largen} instead of
\ref{lemmapsiinequalities1}. 
  \end{steplist}  
\end{proof}

One key ingredient in the above proof was the subadditivity of $\psie$.
\begin{lemma}\label{lemmatildepsisubadditive}
The function $\psie$ defined in 
 (\ref{eqdeftildepsi})
is subadditive in the second argument, in the sense
that for any $b\in \Z^n$ and any set of vectors $T_1,\dots, T_N\in \R^n$ we have
\begin{equation*}
   \psie(b,\sum_i T_i)\le \sum_i \psie(b,T_i)\,.
\end{equation*}
Analogously, if $\gamma$ is $1$-rectifiable and $\tau$ its tangent,
\begin{equation*}
\psie\left(b,  \int_\gamma \tau d\calH^1 \right)\le
\int_\gamma\psie(b,\tau)d\calH^1 \,.
\end{equation*}
\end{lemma}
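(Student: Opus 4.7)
The plan is to reduce everything to the two-term inequality $\psie(b, t_1 + t_2) \le \psie(b, t_1) + \psie(b, t_2)$. The general finite-sum version then follows by induction on $N$, and the rectifiable-set version follows from sublinearity: a positively $1$-homogeneous subadditive function is the support function of a compact convex set $K_b \subset \R^n$, and interchanging supremum and integration gives
\begin{equation*}
\psie\Bigl(b, \int_\gamma \tau \, d\calH^1\Bigr) = \sup_{w \in K_b} \int_\gamma w \cdot \tau \, d\calH^1 \le \int_\gamma \sup_{w \in K_b} (w \cdot \tau) \, d\calH^1 = \int_\gamma \psie(b, \tau) \, d\calH^1.
\end{equation*}

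For the two-term inequality I split $\psie(b, t) = |b|^2 |t| + \eta(b \cdot t)^2/|t|$ and treat the two pieces jointly. Write $r_i = |t_i|$, $a_i = b \cdot t_i$, $R = |t_1 + t_2|$, $A = a_1 + a_2$; the degenerate cases $t_1 + t_2 = 0$ or $t_i = 0$ are handled trivially. The $|b|^2$ part contributes a slack of $|b|^2(r_1 + r_2 - R) \ge 0$ by the triangle inequality, while Titu's inequality (Cauchy--Schwarz in Engel form), $\frac{a_1^2}{r_1} + \frac{a_2^2}{r_2} \ge \frac{A^2}{r_1+r_2}$, shows that the $\eta$ part produces a deficit bounded below by $-\eta A^2(r_1 + r_2 - R)/(R(r_1+r_2))$. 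Thus the overall difference RHS$-$LHS factors as $(r_1+r_2 - R)\bigl(|b|^2 - \eta A^2/(R(r_1+r_2))\bigr)$.

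The main point, and the unique place where the hypothesis $\eta \le 1$ is essential, is the final inequality $\eta A^2/(R(r_1+r_2)) \le |b|^2$. Cauchy--Schwarz gives $A^2 \le |b|^2 R^2$, which combined with $R \le r_1 + r_2$ yields $A^2/(R(r_1+r_2)) \le |b|^2 R/(r_1+r_2) \le |b|^2$; multiplying by $\eta \le 1$ preserves the inequality, so both factors of RHS$-$LHS are non-negative. This coupling is sharp: $\eta = 1$ is the convexity threshold for $\psie(b, \cdot)$ on $\R^n \setminus \{0\}$, and subadditivity fails for $\eta > 1$ already in small perturbations of the form $t_1 = b$, $t_2 = b + \delta b^\perp$.
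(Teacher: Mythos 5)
Your proof is correct but follows a genuinely different route from the paper's. The paper handles all $N$ summands at once: writing $\psie(b,T_i)=|T_i|\,\varphi(\tau_i)$ with $\varphi(\tau)=|b|^2+\eta(b\cdot\tau)^2$ a convex quadratic, Jensen gives $\varphi(\hat\tau)\le\frac1L\sum_i\psie(b,T_i)$ for the weighted average $\hat\tau=\frac1L\sum T_i$ (with $L=\sum|T_i|$), and then a single-variable observation closes the gap: the function $h(\ell)=\ell|b|^2+\ell^{-1}\eta(b\cdot\hat\tau)^2$ is increasing on $[\ell_0,\infty)$, where $\ell_0=\sqrt\eta\,|b\cdot\hat\tau|/|b|\le|\hat\tau|\le1$ by Cauchy--Schwarz and $\eta\le1$, so $\psie(b,\hat\tau)=h(|\hat\tau|)\le h(1)=\varphi(\hat\tau)$. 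You instead reduce to the two-term case by induction and prove it by an explicit factorization of the excess as a product of two nonnegative quantities, via the Engel/Titu form of Cauchy--Schwarz and the triangle inequality; your algebra checks out, and the factorization makes the exact role of $\eta\le1$ transparent (your observation that subadditivity genuinely fails for $\eta>1$ is correct and not stated in the paper). For the integral version the paper simply says the changes are notational, whereas you supply an actual argument via the support-function representation of the sublinear function $\psie(b,\cdot)$ --- that is a clean way to do it, and is a small genuine addition. In short: the paper's proof is more compact and conceptual (it locates the optimal length $\ell_0$ for the one-homogeneous extension in one stroke), while yours is more elementary and self-contained and pins down the sharpness of the $\eta\le1$ hypothesis.
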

\begin{proof} 
For brevity we prove only the first formula, the differences are purely
notational. We can assume $b\ne 0$. We set $\tau_i=T_i/|T_i|$,  $L=\sum_i |T_i|$, and write
$ \psie(b,T_i)= |T_i| \varphi(\tau_i)$ 
where $\varphi(\tau)=|b|^2+\eta (b\cdot \tau)^2$, $\tau\in \R^n$. Since $\varphi$ is convex we
obtain 
\begin{equation*}
|b|^2 + \eta (b\cdot \hat \tau)^2=   \varphi(\hat\tau)\le
 \sum_i \frac{|T_i|}{L} \varphi(\tau_i)=
\frac{1}{L} \sum_i \psie(b,T_i),
\end{equation*}
where
\begin{equation*}
    \hat \tau = \sum_i  \frac{|T_i|}{L} \tau_i = 
\frac{1}{L} \sum_i T_i\,.
\end{equation*}
Set now 
 $h(\ell)=\ell|b|^2+\ell^{-1}\eta (b\cdot\hat\tau)^2$. 
 The function $h$  has a global minimum at
 $\ell_0=\sqrt{\eta}\frac{|b\cdot\hat\tau|}{|b|}\le|\hat\tau|$ and is
 increasing afterwards. 
Since $\hat\tau$ is an average of unit vectors, $|\hat\tau|\le 1$. We obtain
\begin{equation*}
  \psie(b,\hat\tau)=h(|\hat\tau|) \le h(1)=\varphi(\hat\tau),
\end{equation*}
and therefore the desired inequality
\begin{equation*}
  \psie(b,\sum_i T_i)=
  \psie(b,L\hat\tau)=
  L\psie(b,\hat\tau)\le L\varphi(\hat\tau)\le \sum_i \psie(b,T_i)\,.
\end{equation*}
\end{proof}

\begin{lemma}\label{lemmapsiinequalities}

  \begin{enumerate}
  \item\label{lemmapsiinequalities1}
    Let $n\in\{2,\dots, 9\}$, $b\in \Z^n$. If $\beta=\max_i |b_i|>1$ then
    there is  a vector
$a\in \Z^n$ such that $\max_i |a_i|=1$, $\max_i |b_i-a_i|=\beta-1$, and
\begin{equation}\label{eqpsisubadd}
\psi(b-a,t)+  \psi(a,t)\le \psi(b,t) \text{ for all } t\in \Snmu\,.
\end{equation}
  \item\label{lemmapsiinequalities1largen}
    Let $b\in \Z^n$. If $|b|\ge4\sqrt n$ then
    there is  a vector
$a\in \Z^n$ such that $\max_i |a_i|<\max_i|b_i|$ , $\max_i |b_i-a_i| <\max_i|b_i|$,
and (\ref{eqpsisubadd}) holds.
  \item\label{lemmapsiinequalities2}
  If $a,b\in\Z^n$, $a\cdot b=0$  and $|b|\le |a|\sqrt2$, then
\begin{equation*}
\psi(b,t)\le \psi(a+b,t)\text{ for all } t\in \Snmu\,.
\end{equation*}
  \end{enumerate}
\end{lemma}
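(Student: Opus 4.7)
The plan is to reduce parts \ref{lemmapsiinequalities1} and \ref{lemmapsiinequalities1largen} to the task of exhibiting an $a\in\Z^n$ with the prescribed sup-norm bounds and with $a\cdot c\ge \tfrac13 |a||c|$, where $c:=b-a$. To this end I first expand
\[
\psi(a,t)+\psi(c,t)-\psi(a+c,t) = -2\bigl[a\cdot c + \eta(a\cdot t)(c\cdot t)\bigr],
\]
so \eqref{eqpsisubadd} is equivalent to $a\cdot c + \eta(a\cdot t)(c\cdot t)\ge 0$ for every unit $t$. The symmetric rank-two matrix $\tfrac12(ac^T+ca^T)$ has nonzero eigenvalues $\tfrac12(a\cdot c\pm |a||c|)$, so $\min_{|t|=1}(a\cdot t)(c\cdot t) = \tfrac12(a\cdot c-|a||c|)$, and the condition reduces to $a\cdot c\ge \tfrac{\eta}{2+\eta}|a||c|$; since $\eta\le 1$ it suffices to produce $a$ with $a\cdot c\ge \tfrac13|a||c|$.

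For part \ref{lemmapsiinequalities1} I would pick $a_i := \mathrm{sgn}(b_i)$ coordinate-wise (with $\mathrm{sgn}(0)=0$), which gives $\max_i|a_i|=1$ and $\max_i|c_i|=\beta-1$. Setting $S=\{i:b_i\ne 0\}$ and $y_i:=|b_i|-1\in\{0,\dots,\beta-1\}$ for $i\in S$, one has $a\cdot c=\sum_{i\in S}y_i$, $|a|^2=|S|$ and $|c|^2=\sum_{i\in S}y_i^2$. The inequality $y_i^2\le(\beta-1)y_i$ together with $\sum_{i\in S}y_i\ge \beta-1$ (from the coordinate where $|b_{i^*}|=\beta$) yields $(\sum y_i)^2\ge \sum y_i^2$, hence $a\cdot c\ge |c|$ and $a\cdot c/(|a||c|)\ge 1/\sqrt{|S|}\ge 1/\sqrt n\ge 1/3$ whenever $n\le 9$. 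For part \ref{lemmapsiinequalities1largen} I would instead take $a_i$ to be an integer nearest to $b_i/2$, so that $\delta:=a-b/2$ satisfies $\|\delta\|_\infty\le 1/2$; the sup-norm bounds $|a_i|,|c_i|\le (\beta+1)/2<\beta$ follow from $\beta\ge |b|/\sqrt n\ge 4$. A short computation gives $a\cdot c=\tfrac14|b|^2-|\delta|^2$ and $|a|^2|c|^2=(\tfrac14|b|^2+|\delta|^2)^2-(b\cdot\delta)^2\le(\tfrac14|b|^2+|\delta|^2)^2$, so $a\cdot c\ge\tfrac13|a||c|$ rearranges to $|b|^2\ge 8|\delta|^2$, which is implied by $|b|^2\ge 16n$ and $|\delta|^2\le n/4$.

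For part \ref{lemmapsiinequalities2}, using $a\cdot b=0$ a direct expansion gives
\[
\psi(a+b,t)-\psi(b,t) = |a|^2 + \eta(a\cdot t)^2 + 2\eta(a\cdot t)(b\cdot t).
\]
Since $a\perp b$, I would parametrize $t=X\hat a+Y\hat b+t_\perp$ with $X^2+Y^2\le 1$ (where $\hat a=a/|a|$, $\hat b=b/|b|$), and writing $r=|b|/|a|$ reduce the right-hand side to $|a|^2\bigl[1+\eta X^2+2\eta r XY\bigr]$. The $2{\times}2$ symmetric matrix $\begin{pmatrix}\eta & \eta r\\ \eta r & 0\end{pmatrix}$ representing the quadratic form on $(X,Y)$ has minimum eigenvalue $\tfrac{\eta}{2}(1-\sqrt{1+4r^2})$, so the expression is bounded below by $|a|^2\bigl[1+\tfrac{\eta}{2}(1-\sqrt{1+4r^2})\bigr]$. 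Under the hypothesis $r\le\sqrt 2$ one has $\sqrt{1+4r^2}\le 3$, so the bound becomes $|a|^2(1-\eta)\ge 0$. The main obstacle lies in pinning down the sharp geometric constants: in part \ref{lemmapsiinequalities1} the threshold $n\le 9$ is forced by the extremal vector $b=(\beta,\pm 1,\dots,\pm 1)\in\Z^9$ (for which $1/\sqrt n$ meets $1/3$ with equality), and in part \ref{lemmapsiinequalities2} the bound $r\le\sqrt 2$ is exactly what forces $\sqrt{1+4r^2}=3$ and collapses the final estimate to $1-\eta\ge 0$, showing that the constant $\sqrt 2$ cannot be improved.
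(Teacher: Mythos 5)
Your proof is correct and takes a genuinely different route from the paper's, chiefly through a uniform \emph{eigenvalue reduction}: you observe that \eqref{eqpsisubadd} is equivalent (for $c=b-a$) to $a\cdot c+\eta(a\cdot t)(c\cdot t)\ge 0$ for all unit $t$, diagonalize the rank-two matrix $\tfrac12(ac^T+ca^T)$, and thereby reduce both parts \ref{lemmapsiinequalities1} and \ref{lemmapsiinequalities1largen} to the single algebraic condition $a\cdot c\ge\tfrac{\eta}{2+\eta}|a||c|$. The paper instead handles each part by a direct bound on the cross term ($|b'\cdot t|\le|b'|\sqrt{1-x^2}$ with $x=|a\cdot t|/|a|$, followed by an AM--GM step), which is more ad hoc but avoids linear algebra. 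In part \ref{lemmapsiinequalities1} your choice $a_i=\sgn(b_i)$ on all nonzero coordinates also differs from the paper's $a=\sum_{i:|b_i|=\beta}\sgn(b_i)e_i$, which only uses the coordinates attaining the maximum; both satisfy the sup-norm constraints and both yield $a\cdot c\ge|c|$, hence $a\cdot c/|a||c|\ge 1/\sqrt n$. In part \ref{lemmapsiinequalities1largen} your $a=\text{round}(b/2)$ versus the paper's $a_i=\sgn(b_i)\lceil|b_i|/2\rceil$ are the same up to the tie-breaking for odd $b_i$, and your clean identities $a\cdot c=\tfrac14|b|^2-|\delta|^2$, $|a|^2|c|^2\le(\tfrac14|b|^2+|\delta|^2)^2$ give a particularly transparent proof of the required inequality. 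Part \ref{lemmapsiinequalities2} is again an eigenvalue computation in the $(X,Y)$-plane spanned by $\hat a,\hat b$, equivalent to the paper's estimate via $2\sqrt2\,x\sqrt{1-x^2}\le 1+x^2$; your remark that $r=\sqrt2$ is sharp at $\eta=1$ is right because the minimizing $t$ lies in $\mathrm{span}(a,b)$ with $X^2+Y^2=1$. What the eigenvalue route buys is a unified, geometrically transparent criterion that also makes clear why $n\le 9$ is the threshold ($1/\sqrt n\ge 1/3$) and why $\sqrt2$ cannot be improved; the paper's inequalities are more elementary and self-contained.

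Two minor points to make explicit: when $\delta=a-b/2$ you should state that you pick \emph{some} nearest integer (ties at half-integers can be broken either way, as the paper does with $\lceil\cdot\rceil$); and you implicitly use that $\min_{|t|=1}t^TAt$ equals the least eigenvalue of the symmetric matrix $A$ and is attained, so the reduction is in fact an equivalence, not just a sufficient condition. Neither affects correctness.
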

We observe that the construction in \ref{lemmapsiinequalities1} does not work
for $n\ge 10$. 
Indeed, if we take $n=10$, $\eta=1$, $b=2e_1+\sum_{i=2}^{10} e_i$,
$t=\frac12 e_1 - (12)^{-1/2} \sum_{i=2}^{10}e_i$  then a short computation shows
that $\psi(b,t)<\psi(b-e_1,t)+\psi(e_1,t)$.
\begin{proof}
\ref{lemmapsiinequalities1}: 
We need to choose $a$ such that the quantity
\begin{alignat*}1
\xi&=\psi(b,t)-\psi(a,t)-\psi(b-a,t)
=2(b-a)\cdot a + 2 \eta ((b-a)\cdot t)(a\cdot t)
\end{alignat*}
is nonnegative.
We set
\begin{equation*}
  a=\sum_{i: |b_i|=\beta} \sgn(b_i) e_i,
\end{equation*}
so that  $\max_i |a_i|=1$, $\max_i |b_i-a_i|=\beta-1$,
$b=\beta a + b'$ and $a\cdot b'=0$.
Then
\begin{alignat*}1
  \xi&=2 (\beta-1)|a|^2 + 2 \eta (\beta-1) (a\cdot t)^2 + 2\eta
  (b'\cdot t) (a\cdot t)\\
  &\ge 2 (\beta-1)|a|^2 \eta \left[ 1 + x^2 - \frac{|b'|}{|a|(\beta-1)} x
    \sqrt{1-x^2} \right],
\end{alignat*}
where we set $x=|a\cdot t|/|a|$ and used that, since $a$ and $b'$ are
orthogonal, $|b'\cdot t|\le |b'|\sqrt{1-x^2}$. Since $b'$ has at most $n-1$
non-zero 
components, each of them has length at most $\beta-1$, and $|a|\ge 1$ we have
$\frac{|b'|}{|a|(\beta-1)} \le \sqrt{n-1}\le \sqrt8 =2\sqrt2$. The
conclusion follows from the fact that 
$2\sqrt2 x\sqrt{1-x^2}\le (\sqrt 2 x)^2+ (1-x^2)=1+x^2$.

\ref{lemmapsiinequalities1largen}: We set $a=\sum_i {\rm sgn}(b_i)\lceil |b_i|/2\rceil e_i$,
$f=b-2a$, and compute, with $\xi$ as above,
\begin{alignat*}1
\xi=&2\bigl(|a|^2+f\cdot a + \eta (a\cdot t)^2+\eta (a\cdot t)(f\cdot t)\bigr)
\ge 2 (|a|^2-2|a|\, |f|)\,.
\end{alignat*}
The conclusion follows from  $|f|\le \sqrt n$ and $|a|\ge |b|/2\ge 2\sqrt n$.

\ref{lemmapsiinequalities2}:
We write
\begin{alignat*}1
 \psi(a+b,t)-  \psi(b,t)
&= |a|^2+|b|^2+\eta(t\cdot a+t\cdot b)^2 - (|b|^2+\eta (t\cdot b)^2)\\
&= |a|^2+\eta[(t\cdot a+t\cdot b)^2 -  (t\cdot b)^2]\\
&\ge\eta[|a|^2 + (t\cdot a)^2+ 2 (t\cdot a)(t\cdot b)]\,.
\end{alignat*}
As in the previous case we set
 $x=|a\cdot t|/|a|$ and use orthogonality to write
\begin{alignat*}1
 \psi(a+b,t)-  \psi(b,t)
&\ge\eta |a|^2[1+x^2-\frac{2|b|}{|a|} x\sqrt{1-x^2}]\,.
\end{alignat*}
The conclusion follows, using
$|b|\le |a|\sqrt2$, with the same inequality as 
in \ref{lemmapsiinequalities1}.
\end{proof}

\subsection{Explicit relaxation for special $b$}
\label{secexplicit}
\begin{lemma}\label{lemmabbetaeu}
For $n\le 9$ and all $i\in \{1,\dots, n\}$, $\beta\in\Z$ we have
\begin{equation*}
\bar\psi(\beta
e_i,t)=|\beta|\psi(e_i,t) \,.
\end{equation*}
\end{lemma}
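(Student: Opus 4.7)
The plan is to establish matching upper and lower bounds on $\bar\psi(\beta e_i, t)$. For the upper bound, I would iterate the subadditivity in the first argument of Lemma \ref{cellproblem}\ref{subadditive} to get $\bar\psi(\beta e_i, t)\le |\beta|\bar\psi(\sgn(\beta) e_i, t)$, and then combine $\bar\psi\le\psi$ with the evenness $\psi(-b, t)=\psi(b, t)$, which is immediate from \eqref{eqdefpsi}.

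For the lower bound I would invoke Proposition \ref{nnove} (available since $n\le 9$), which reduces matters to showing that whenever a family $(T_\alpha)_{\alpha\in\{-1,0,1\}^n}\subset\R^n$ satisfies the matrix constraint $\sum_\alpha \alpha\otimes T_\alpha = \beta e_i\otimes t$, one has $\sum_\alpha \psie(\alpha, T_\alpha)\ge |\beta|\psi(e_i, t)$. Reading off the $i$-th row of the constraint gives the vector identity $\sum_\alpha \alpha_i T_\alpha = \beta t$. Using that $\psie(e_i, \cdot)$ is subadditive (Lemma \ref{lemmatildepsisubadditive}), positively one-homogeneous, and even, this produces the chain
\begin{equation*}
|\beta|\psi(e_i, t) = \psie(e_i, \beta t) \le \sum_\alpha \psie(e_i, \alpha_i T_\alpha) = \sum_\alpha |\alpha_i|\,\psie(e_i, T_\alpha) = \sum_{\alpha:\,|\alpha_i|=1} \psie(e_i, T_\alpha).
\end{equation*}

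The heart of the argument is the pointwise comparison $\psie(\alpha, T)\ge \psie(e_i, T)$ for every $T\in\R^n$ and every $\alpha\in\{-1, 0, 1\}^n$ with $|\alpha_i|=1$. By one-homogeneity it suffices to treat $|T|=1$; writing $\alpha = \alpha_i e_i + \alpha'$ with $\alpha'\cdot e_i = 0$ and setting $y = \alpha'\cdot T$, a direct expansion yields
\begin{equation*}
\psi(\alpha, T) - \psi(e_i, T) = |\alpha'|^2 + \eta\bigl(y^2 + 2\alpha_i T_i y\bigr) \ge |\alpha'|^2 - \eta T_i^2,
\end{equation*}
where the second step minimizes the quadratic in the real variable $y$ (attained at $y = -\alpha_i T_i$). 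Because $\alpha'\in\Z^n$ the quantity $|\alpha'|^2$ is either $0$, in which case $\alpha = \pm e_i$ and the claim is an equality, or at least $1\ge \eta T_i^2$ (using $\eta\le 1$ and $|T_i|\le 1$); in both cases the difference is non-negative. Inserting this pointwise bound into the previous display closes the chain and yields $\sum_\alpha \psie(\alpha,T_\alpha)\ge |\beta|\psi(e_i,t)$. The main subtlety is precisely this last step: one has to rule out gaining energy by splitting $\beta e_i$ across directions $\alpha$ not parallel to $e_i$, and the integrality of $\alpha'$ together with $\eta\le 1$ is exactly what absorbs the potentially negative cross-term created by such a splitting.
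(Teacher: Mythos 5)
Your proof is correct and follows essentially the same route as the paper: the upper bound via subadditivity and $\bar\psi\le\psi$, the lower bound via Proposition \ref{nnove}, the $i$-th row of the constraint, the pointwise comparison $\psie(\alpha,T)\ge\psie(e_i,T)$ for $|\alpha_i|=1$, and then subadditivity of $\psie(e_i,\cdot)$. The only cosmetic difference is that for the pointwise comparison the paper invokes Lemma \ref{lemmapsiinequalities}\ref{lemmapsiinequalities2} (with $b=\alpha_ie_i$, $a=\alpha-b$), whereas you re-derive the same inequality by a short direct expansion and minimization of the cross-term in $y=\alpha'\cdot T$; both hinge on the same facts, namely $|\alpha'|^2\ge1$ for integer $\alpha'\ne0$, $\eta\le1$, and $|T_i|\le|T|$.
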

\begin{proof}
  The inequality  $\bar\psi(\beta e_i,t)\le |\beta|\psi(e_i,t)$ follows from subadditivity. To
  prove the converse inequality, we first observe that
  \begin{equation*}
    \psi(e_i,t)\le \psi(\alpha,t) \text { whenever } \alpha_i\in\{-1,1\}\,.
  \end{equation*}
Indeed, it suffices to apply  Lemma
\ref{lemmapsiinequalities}\ref{lemmapsiinequalities2} with $b=\alpha_i e_i$,
and $a=\alpha-b$, which is admissible because $|b|=1$ and $|a|\ge 1$ (unless
$a=0$, but in this case there is nothing to prove).

Let $T$ be a minimizer in the lower bound (\ref{eqlbtalpha}). We estimate,
using the above observation and then
Lemma \ref{lemmatildepsisubadditive},
\begin{equation*}
  \sum_{\alpha: \alpha_i\ne 0} \psie(\alpha, T_\alpha) \ge
  \sum_{\alpha: \alpha_i\ne 0} \psie(e_i, T_\alpha) \ge
  \psie(e_i, \sum_{\alpha: \alpha_i\ne 0} \alpha_i T_\alpha)=
  \psie(e_i, z)\,,
\end{equation*}
where we defined 
 $z=\sum_{\alpha: \alpha_i\ne 0} \alpha_i T_\alpha$.
The $i$-th row of the  condition $\sum_\alpha \alpha\otimes T_\alpha=b\otimes
t$ gives then $z=\beta t$.  We conclude that
\begin{equation*}
  \bar\psi(\beta e_i,t)\ge \psie(e_i,\beta t)=|\beta| \psi(e_i,t)
\end{equation*}
and therefore the statement.
\end{proof}

\begin{lemma}\label{lemmaeiej}
For $n\le 9$ and all $\beta\in \Z$, $t\in \Snmu$, $i\ne j\in \{1,\dots, n\}$ we have
  \begin{alignat*}1
    \bar\psi(\beta(e_i+e_j),t)=&|\beta|\min \left\{
      \psie(e_i,z_1)+\psie(e_j,z_2)
\phantom{\psie(e_i-e_j,\frac{z_2-z_1}{2})}\right.\\
&\left.+\psie(e_i-e_j,\frac{z_2-z_1}{2})
+\psie(e_i+e_j,t-\frac{z_1+z_2}2): z_1,z_2\in
      \R^n\right\}
  \end{alignat*}
and correspondingly for $\beta(e_i-e_j)$.
\end{lemma}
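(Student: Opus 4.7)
\emph{Plan.} The lemma is proved by matching upper and lower bounds, both of which reduce the problem to the $(e_i,e_j)$-plane.

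For the lower bound, apply Proposition \ref{nnove} (applicable since $n\le 9$) to obtain $\bar\psi(\beta(e_i+e_j),t)\ge \min_T\sum_\alpha\psie(\alpha,T_\alpha)$ subject to $\sum_\alpha \alpha\otimes T_\alpha=\beta(e_i+e_j)\otimes t$. Reduce the feasible set in two stages. First, decompose each $\alpha\in\{-1,0,1\}^n$ as $\alpha=\bar\alpha+\alpha^*$ with $\bar\alpha=\alpha_ie_i+\alpha_je_j$ and $\alpha^*$ supported on coordinates distinct from $i,j$. When $\alpha^*\ne 0$ the inequality $|\bar\alpha|^2\le 2\le 2|\alpha^*|^2$ makes Lemma \ref{lemmapsiinequalities}\ref{lemmapsiinequalities2} applicable, giving $\psie(\bar\alpha,T_\alpha)\le\psie(\alpha,T_\alpha)$; when $\bar\alpha=0$ we use $\psie(0,T_\alpha)=0$. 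Aggregating $S_{\bar\alpha}=\sum_{\alpha^*}T_{\bar\alpha+\alpha^*}$ and invoking subadditivity of $\psie$ in its second argument (Lemma \ref{lemmatildepsisubadditive}) yields
\[
\sum_\alpha\psie(\alpha,T_\alpha)\ge\sum_{\bar\alpha\in\{-1,0,1\}^2\setminus\{0\}}\psie(\bar\alpha,S_{\bar\alpha}),
\]
with the constraints in coordinates $k\ne i,j$ automatically satisfied and only the $i$- and $j$-row constraints $\sum_{\bar\alpha}\alpha_iS_{\bar\alpha}=\beta t$, $\sum_{\bar\alpha}\alpha_jS_{\bar\alpha}=\beta t$ surviving. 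Second, pair each $\bar\alpha$ with $-\bar\alpha$, using $\psie(-c,-v)=\psie(c,v)$ and subadditivity to introduce $U_1=S_{(1,0)}-S_{(-1,0)}$, $U_2=S_{(0,1)}-S_{(0,-1)}$, $U_3=S_{(1,-1)}-S_{(-1,1)}$, $U_4=S_{(1,1)}-S_{(-1,-1)}$. The bound then becomes $\psie(e_i,U_1)+\psie(e_j,U_2)+\psie(e_i-e_j,U_3)+\psie(e_i+e_j,U_4)$ under $U_1+U_3+U_4=\beta t$ and $U_2-U_3+U_4=\beta t$. Substituting $U_k=\beta\tilde U_k$ extracts the factor $|\beta|$; expressing $\tilde U_3=(\tilde U_2-\tilde U_1)/2$, $\tilde U_4=t-(\tilde U_1+\tilde U_2)/2$ and renaming $\tilde U_1=z_1$, $\tilde U_2=z_2$ gives exactly the asserted minimum.

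For the upper bound, subadditivity of $\bar\psi$ in the first argument (Lemma \ref{cellproblem}\ref{subadditive}) reduces to $\beta=1$. For fixed $z_1,z_2\in\R^n$, construct a polyhedral measure in a suitably rescaled $B_{1/2}$ with the required boundary condition whose energy realizes the four-term formula. The construction uses the vertices $P_0=-t/2$, $A_1=P_0+z_1$, $A_2=P_0+z_2$, $M=P_0+(z_1+z_2)/2$ and $P_\infty=t/2$: route an $e_i$-flow $P_0\to A_1\to M\to P_\infty$ and an $e_j$-flow $P_0\to A_2\to M\to P_\infty$ so that the two flows merge on $M\to P_\infty$ with combined multiplicity $e_i+e_j$ (vector $t-(z_1+z_2)/2$), while a shared edge of vector $(z_2-z_1)/2$ carries combined multiplicity $e_i-e_j$ (obtained by reversing the $e_j$-flow's orientation on a segment traversed by the $e_i$-flow). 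Additional balancing segments at the internal nodes are chosen with vectors whose energy contributions can be absorbed into, or made to vanish against, the four main terms by a rearrangement or limiting argument. Applying Lemma \ref{cellproblem}\ref{polygonal} to this polyhedral yields $\bar\psi(e_i+e_j,t)\le E(\mu,B_{1/2})$, which equals the four-term expression.

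The main obstacle is the upper bound construction: no divergence-free polyhedral measure with exactly four segments of the four prescribed multiplicities and vectors exists for generic $z_1,z_2$, so the argument must carefully route the $e_i$- and $e_j$-flows through a shared segment with opposite orientations and control the extra balancing pieces. The lower bound, by contrast, is essentially algebraic; its nontrivial input is Lemma \ref{lemmapsiinequalities}\ref{lemmapsiinequalities2} together with the subadditivity statements of Lemmas \ref{cellproblem} and \ref{lemmatildepsisubadditive}. The analogous formula for $\beta(e_i-e_j)$ indicated by ``correspondingly'' in the statement follows by the same argument with $e_j$ replaced by $-e_j$ throughout.
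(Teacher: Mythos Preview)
Your lower bound is correct and is essentially the paper's argument, just organized in two steps (first project $\alpha$ to its $(i,j)$-part $\bar\alpha$, then pair $\bar\alpha$ with $-\bar\alpha$) where the paper does both at once by defining directly
\[
T_1=\!\!\sum_{\alpha_i\ne0,\alpha_j=0}\!\!\alpha_i T_\alpha,\quad
T_2=\!\!\sum_{\alpha_i=0,\alpha_j\ne0}\!\!\alpha_j T_\alpha,\quad
T_\pm=\!\!\sum_{\alpha_i=\pm\alpha_j\ne0}\!\!\alpha_i T_\alpha.
\]
The use of Lemma~\ref{lemmapsiinequalities}\ref{lemmapsiinequalities2} and subadditivity (Lemma~\ref{lemmatildepsisubadditive}) is identical.

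The upper bound, however, has a real gap. In your routing the $e_i$-flow goes $P_0\to A_1\to M$ and the $e_j$-flow goes $P_0\to A_2\to M$. The edges $A_1\to M$ and $A_2\to M$ both have direction $\pm\tfrac12(z_2-z_1)$, but they are \emph{different} segments in space (one joins $A_1$ to the midpoint $M$, the other joins $A_2$ to $M$); they meet only at $M$. Hence there is no shared edge carrying combined multiplicity $e_i-e_j$. The energy of your measure is
\[
\psie(e_i,z_1)+\psie(e_j,z_2)+\psie\!\left(e_i+e_j,t-\tfrac{z_1+z_2}{2}\right)
+\psie\!\left(e_i,\tfrac{z_2-z_1}{2}\right)+\psie\!\left(e_j,\tfrac{z_1-z_2}{2}\right),
\]
and the last two terms do \emph{not} combine to $\psie(e_i-e_j,\tfrac{z_2-z_1}{2})$; in general they are strictly larger. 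Invoking ``additional balancing segments'' and a ``limiting argument'' does not repair this: any extra segment carries nonnegative energy, and you need the bound to be exact.

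The fix (which is the paper's construction) is to split each of $z_1,z_2$ in half and \emph{interleave} the two curves so that they genuinely overlap on a single segment with opposite orientations. Take the $e_i$-curve through
\[
0,\ \tfrac12 z_1,\ \tfrac12 z_2,\ \tfrac12(z_1+z_2),\ t
\]
and the $e_j$-curve through
\[
0,\ \tfrac12 z_2,\ \tfrac12 z_1,\ \tfrac12(z_1+z_2),\ t.
\]
Now the segment $[\tfrac12 z_1,\tfrac12 z_2]$ is traversed by both curves in opposite directions, producing multiplicity $e_i-e_j$ with vector $\tfrac12(z_2-z_1)$; the last segment $[\tfrac12(z_1+z_2),t]$ is traversed by both in the same direction, producing multiplicity $e_i+e_j$; and the remaining four segments (two with vector $\tfrac12 z_1$ and multiplicity $e_i$, two with vector $\tfrac12 z_2$ and multiplicity $e_j$) contribute exactly $\psie(e_i,z_1)+\psie(e_j,z_2)$. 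No balancing pieces are needed. After rescaling and periodizing one obtains the recovery sequence.
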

\begin{proof}
  \begin{steplist}
    \item Lower bound. 
For ease of notation we focus on the  case $i=1$, $j=2$. Let $T$ be a minimizer in the lower bound (\ref{eqlbtalpha}) corresponding to $\beta(e_1+e_2)$. We define
\begin{alignat*}2
  T_1=&\sum_{\alpha_1\ne 0, \alpha_2=0} \alpha_1 T_\alpha\,,
&\hskip1cm &
  T_2=\sum_{\alpha_1= 0, \alpha_2\ne 0} \alpha_2 T_\alpha\,,\\
  T_+=&\sum_{\alpha_1=\alpha_2 \ne 0} \alpha_1 T_\alpha \,,&&
  T_-=\sum_{\alpha_1=-\alpha_2 \ne 0} \alpha_1 T_\alpha\,.
\end{alignat*}
The sets over which these sums run are disjoint, and $\alpha_1=\alpha_2=0$ on
all other values of $\alpha$. Therefore the first two rows of
$\sum_\alpha \alpha\otimes T_\alpha=\beta(e_1+e_2)\otimes t$ give
\begin{alignat}1\label{eqt1t2tpbetat}
  T_1+T_++T_-&=\beta t \hskip3mm\text{ and }\hskip3mm
  T_2+T_+-T_-=\beta t \,.
\end{alignat}
In particular, $ T_1-T_2+2T_-=0$. 
We decompose the sum of the $\psi(\alpha, T_\alpha)$ in (\ref{eqlbtalpha})
into the same four parts as above.

Let us start with the  part with $\alpha_1=\alpha_2\ne0$. For each $\alpha$
with this property 
we consider $b=\alpha_1(e_1+e_2)$ and $a=\alpha-b$. Then
$a\cdot b=0$ and, recalling that $|\alpha_1|=1$, we have $\sqrt2= |b|\le |a|\sqrt2$ (unless $a=0$, but in this case there is nothing
to prove!). By Lemma \ref{lemmapsiinequalities}\ref{lemmapsiinequalities2} we
obtain $\psie(e_1+e_2,t)\le \psie(\alpha, t)$ for all $t$.  Therefore
\begin{equation*}
  \sum_{\alpha_1=\alpha_2\ne 0} \psie(\alpha, T_\alpha)
  \ge   \sum_{\alpha_1=\alpha_2\ne 0} \psie(e_1+e_2, \alpha_1 T_\alpha)
  \ge    \psie(e_1+e_2, T_+)\,,
\end{equation*}
where in the last step we used 
the subadditivity of Lemma \ref{lemmatildepsisubadditive}.
The case $\alpha_1\ne 0=\alpha_2$ is  similar and
has already been treated in the  proof of Lemma \ref{lemmabbetaeu},
\begin{equation*}
  \sum_{\alpha_1\ne0,\alpha_2=0} \psie(\alpha, T_\alpha)
  \ge   \sum_{\alpha_1\ne0,\alpha_2=0} \psie(e_1, \alpha_1 T_\alpha)
  \ge    \psie(e_1, T_1)\,.
\end{equation*}
The other two cases are almost identical. 
Therefore we have shown that 
\begin{alignat*}1
\bar  \psi(\beta(e_1+e_2),t)\ge&
    \ \psie(e_1, T_1)+
    \psie(e_2, T_2)+
    \psie(e_1+e_2, T_+)+
    \psie(e_1-e_2, T_-)\,.
\end{alignat*}
We  set $z_1=T_1/\beta$, $z_2=T_2/\beta$. By (\ref{eqt1t2tpbetat}) 
one has $T_-=\beta (z_2-z_1)/2$ and $T_+=\beta ( t-(z_1+z_2)/2)$. Since
$\psie$ is positively 1-homogeneous in the second argument,
\begin{alignat*}1
\bar  \psi(\beta(e_1+e_2),t)\ge&
\ |\beta| \psie(e_1, z_1)+
|\beta| \psie(e_2, z_2)\\
&+
|\beta| \psie(e_1+e_2, t-\frac{z_1+z_2}2)+
|\beta|    \psie(e_1-e_2, \frac{z_2-z_1}2)\,.
\end{alignat*}
\item Upper bound. It suffices to consider $\beta=1$, the other cases follow by
  subadditivity. 
The construction is illustrated in Figure \ref{fig:constr}. 
Precisely, we let $\gamma_1$ be the polygonal curve that joins (in this order)
the points
\begin{equation*}
  (0,0), \hskip2mm
  \frac12 z_1, \hskip2mm
  \frac12 z_2, \hskip2mm
  \frac12 (z_1+z_2), \hskip2mm
  t\,,  
\end{equation*}
and $\tau_1$ its tangent vector. Analogously, let $\gamma_2$ be the curve that
joins
\begin{equation*}
  (0,0), \hskip2mm
  \frac12 z_2, \hskip2mm
  \frac12 z_1, \hskip2mm
  \frac12 (z_1+z_2), \hskip2mm
  t\,,  
\end{equation*}
and $\tau_2$ its tangent. Then we set
\begin{equation*}
  \mu= e_1 \otimes \tau_1\calH^1\LL \gamma_1 + 
e_2 \otimes \tau_2\calH^1\LL \gamma_2 \,.
\end{equation*}
One can then extend $\mu$ $t$-periodic and rescale to get a sequence
$\mu_k\to(e_1+e_2)\otimes t \calH^1\LL(\R t)$ and prove the upper bound.
  \end{steplist}
\end{proof}

\begin{figure}[t]
\begin{center}
 \includegraphics[width=\textwidth]{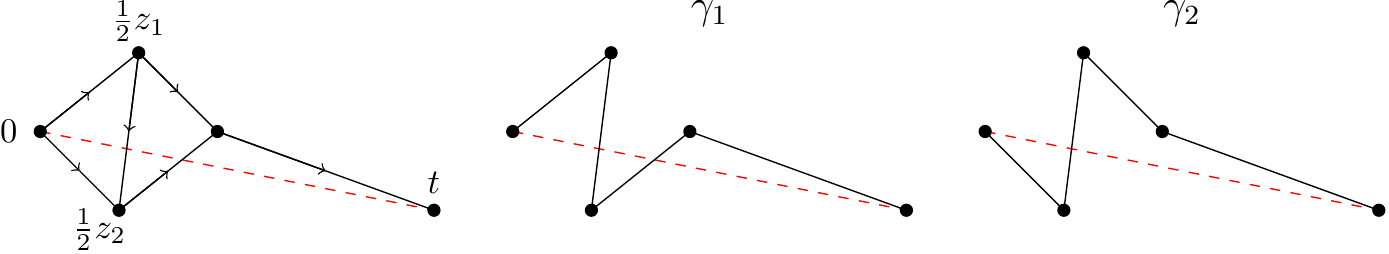}
\end{center}
\caption{Sketch of the construction used in the upper bound of Lemma
  \ref{lemmaeiej}.
  The left panel shows the support of the measure, the central one the part on
  which $\alpha_1\ne0$, the right one the part on which $\alpha_2\ne 0$. The
  red dashed line is $t$.}
\label{fig:constr} 
\end{figure}

The following, more explicit result in two dimensions was mentioned without
proof  in \cite{Con_Gar_Muel}. It shows that in this case the relaxation is
obtained first by making the integrand subadditive in the first argument than
taking the 
(one-homogeneous) convex envelope in the second argument of the result,
corresponding to the upper bound given in \cite{CacaceGarroni2009}. 
In particular, the minimum is not always trivial. For example, for $t=e_2$ it
is easy to see that whenever $\eta>0$ the minimizer obeys $z\cdot e_1>0$.
The resulting microre is illustrated in Figure \ref{fig:example}.

\begin{figure}[t]
\begin{center}
 \includegraphics[width=\textwidth]{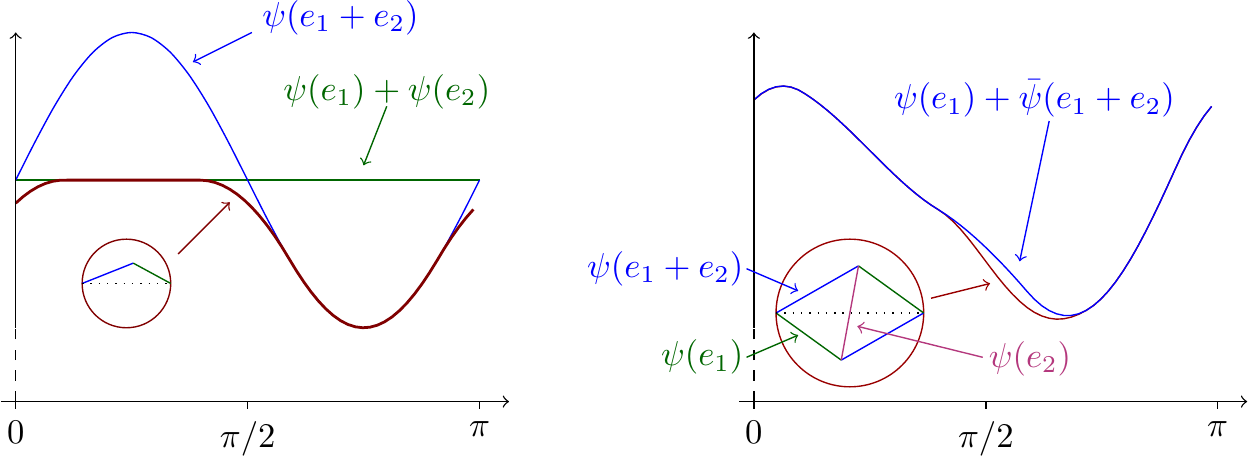}
\end{center}
\caption{Left panel: $\bar\psi(e_1+e_2,t)$ as given in
  Lemma   \ref{lemmabarpsi11} as a function of $\alpha$, for $\eta=1$, $t=(\cos\alpha,\sin\alpha)$. The two 
  one-dimensional options   $\psi(e_1,t)+\psi(e_2,t)=2+\eta$ and
  $\psi(e_1+e_2,t)=2+\eta(1+ t_1t_2)$ are   optimal for different
  orientations. Close to the intersection a mixture of the two
  options is optimal, as sketched in the inset. 
  Right panel: Corresponding plot for $\bar\psi(2e_1+e_2,t)$ (different
  vertical scale). For most values
  of $t$ the optimal energy is obtained using
  $\psi(e_1,t)+\bar\psi(e_1+e_2,t)$. The latter is the convex, subadditive envelope of $\psi$, see discussion at the end of Section 
\ref{secexplicit}. However, there is a region in which a
  more complex  structure develops (sketched in the inset), leading to a 
lower energy. The latter
  construction   bears similarity
  to the examples given in  \cite{AB2,Caraballo2009}.}
\label{fig:example} 
\end{figure}

\begin{lemma}\label{lemmabarpsi11}
For $n=2$ and all $\beta\in \Z$, $t\in \Suno$ we
have 
    \begin{equation*}
    \bar\psi(\beta(e_1+e_2),t)=|\beta|\min\left\{
      \psie(e_1,z)+\psie(e_2,z)+\psie(e_1+e_2,t-z): z\in
      \R^2\right\}\,.
  \end{equation*}
\end{lemma}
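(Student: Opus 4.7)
The strategy is to deduce the three-term formula in Lemma \ref{lemmabarpsi11} from the four-term formula of Lemma \ref{lemmaeiej} by showing that in two dimensions the additional degree of freedom $z_1 \ne z_2$ does not help. Lemma \ref{lemmaeiej} with $i=1,j=2$ reads
\begin{equation*}
\bar\psi(\beta(e_1{+}e_2), t) = |\beta|\inf_{z_1, z_2 \in \R^2} \Bigl[\psie(e_1, z_1) + \psie(e_2, z_2) + \psie(e_1{-}e_2, \tfrac{z_2-z_1}{2}) + \psie(e_1{+}e_2, t {-} \tfrac{z_1+z_2}{2})\Bigr].
\end{equation*}
Setting $z_1=z_2=z$ kills the $\psie(e_1{-}e_2,\cdot)$-term and produces the three-term expression, so its minimum is automatically an upper bound for $\bar\psi$. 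The nontrivial content of the lemma is the reverse inequality.

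I would prove it by showing that for every pair $(z_1,z_2)\in\R^2\times\R^2$, the three-term expression at $z=(z_1+z_2)/2$ is bounded above by the four-term expression at $(z_1,z_2)$. With this choice the last $\psie(e_1{+}e_2,\cdot)$-term agrees in both formulas, so after cancelling it and using the two-dimensional identity $\psie(e_1, v) + \psie(e_2, v) = (2+\eta)|v|$ (immediate from $\psie(e_i, v) = |v| + \eta v_i^2/|v|$ and $v_1^2+v_2^2=|v|^2$), the claim reduces to the key inequality
\begin{equation*}
(\ast)\qquad (2+\eta)|u| \;\leq\; \psie(e_1, u-w) + \psie(e_2, u+w) + \psie(e_1-e_2, w), \qquad u,w\in\R^2,
\end{equation*}
where $u=(z_1+z_2)/2$ and $w=(z_2-z_1)/2$.

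I would then establish $(\ast)$ by splitting on the sign of $w_1w_2$. Subadditivity of $\psie$ in the second variable (Lemma \ref{lemmatildepsisubadditive}) gives $\psie(e_1,u-w)\ge\psie(e_1,u)-\psie(e_1,w)$ and $\psie(e_2,u+w)\ge\psie(e_2,u)-\psie(e_2,w)$; summing and invoking the 2D identity yields $\psie(e_1,u-w)+\psie(e_2,u+w)\geq(2+\eta)|u|-(2+\eta)|w|$. The direct computation $\psie(e_1-e_2,w)=(2+\eta)|w|-2\eta w_1w_2/|w|$ (valid in 2D since $(w_1-w_2)^2=|w|^2-2w_1w_2$) then immediately produces $(\ast)$ whenever $w_1w_2\le 0$.

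The main obstacle is the case $w_1 w_2>0$, where the bound above falls short by up to $2\eta w_1w_2/|w|$. This is precisely where the hypothesis $\eta\le 1$ is needed, and in fact $(\ast)$ fails for large $\eta$. The plan is to exploit that the subadditivity bounds are tight only when $u$ and $w$ are parallel, so for generic configurations extra slack is available. Setting $F(u,w):=\text{RHS}-\text{LHS}$ of $(\ast)$, a direct computation shows that the directional derivative of $F(u,\cdot)$ at $w=0$ in any unit direction $v$ equals $2+\eta(v_1-v_2)^2$ plus a linear term in $v$ coming from the first derivatives of $\psie(e_i,u)$, and this quantity is nonnegative for $\eta\le 1$; hence $w=0$ is a local minimum of $F(u,\cdot)$. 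Finally, a ray-convexity argument along $w=sv$, verified by checking that $s\mapsto\psie(e_i,u\mp sv)$ has nonnegative second derivative whenever $\eta\le 1$, extends the local inequality to all of $\R^2$ and completes the proof.
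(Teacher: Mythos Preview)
Your reduction to the inequality $(\ast)$ is exactly the paper's (the paper writes $\psie(e_1,m)+\psie(e_2,m)$ where you write $(2+\eta)|u|$, which is the same by the 2D identity you mention). After that the routes diverge: the paper attacks $(\ast)$ directly via a polar–coordinate change of variables and a chain of trigonometric identities, whereas you propose to exploit the convexity of $T\mapsto\psie(b,T)$ (Lemma \ref{lemmatildepsisubadditive}) to reduce $(\ast)$ to its linearization at $w=0$. This convexity reduction is a genuinely different and conceptually clean idea: since each term $s\mapsto\psie(e_1,u-sv)$, $\psie(e_2,u+sv)$, $\psie(e_1-e_2,sv)$ is convex in $s$ and $F(u,0)=0$, it does suffice to check that $\partial_s^+F(u,sv)|_{s=0}\ge0$ for all unit $v$. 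Note, incidentally, that this argument covers all $w$, so your case split on the sign of $w_1w_2$ is unnecessary.

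The gap is that you assert ``this quantity is nonnegative for $\eta\le1$'' without justification, and this is precisely where the work lies. The directional derivative at $w=0$ is
\[
\bigl(D_T\psie(e_2,u)-D_T\psie(e_1,u)\bigr)\cdot v + \psie(e_1-e_2,v),
\]
and while the second term is $\ge2$ for unit $v$, the first (linear) term can be as negative as $-2\eta$, so the inequality is sharp at $\eta=1$ and needs an argument. A short computation in fact shows that for $|u|=1$ the vector $D_T\psie(e_2,u)-D_T\psie(e_1,u)$ has Euclidean norm $\eta\sqrt{1+12u_1^2u_2^2}\le 2\eta$, which closes the gap immediately; but you should write this out, since without it the proposal has merely relocated the difficulty rather than resolved it. Once this bound is supplied, your argument is complete and arguably simpler than the paper's.
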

\begin{proof}
We just need to show that minimum in the formula of Lemma 
 \ref{lemmaeiej}  is attained at $z_1=z_2$. 
This is equivalent to the statement that
  \begin{alignat*}1
\psie(e_1,m-d)+\psie(e_2,m+d)+\psie(e_1-e_2,d)
    - \psie(e_1,m)-\psie(e_2,m)\ge0
  \end{alignat*}
for all $m,d\in\R^2$ (we set $z_1=m-d$, $z_2=m+d$).
Explicitly, this expression is
  \begin{multline*}
|m-d|+|m+d|+2|d|+\eta\frac{(m_1-d_1)^2}{|m-d|}
    +\eta\frac{(m_2+d_2)^2}{|m+d|}+
    \eta \frac{(d_1-d_2)^2}{|d|} \\
- 2|m|-\eta \frac{m_1^2+m_2^2}{|m|}\,.
      \end{multline*}
      Clearly $|m+d|+|m-d|\ge2|m|$, $(d_1-d_2)^2\ge0$ and $2|d|\ge
      2\eta|d|$. Therefore it   suffices to show that 
  \begin{alignat*}1
\xi=2|d|+\frac{(m_1-d_1)^2}{|m-d|}
    +\frac{(m_2+d_2)^2}{|m+d|}-|m|\ge 0\,
      \end{alignat*}
for all $m,d\in\R^2$.
  We set $m-d=r (\cos\theta,\sin\theta)$, 
  $m+d=s (\cos\varphi,\sin\varphi)$, with $r,s\in(0,\infty)$, $\theta,\phi\in
  \R$. 
 From  $2m=(m+d)+(m-d)$ we obtain $|m|\le (r+s)/2$, and 
 with $2d=(m+d)-(m-d)$ we have $\xi\ge\zeta$, where
  \begin{alignat*}1
    \zeta&=   \sqrt{r^2+s^2-2rs\cos(\varphi-\theta)}+ r \cos^2\theta + s \sin^2\varphi 
   - \frac12
    (r+s)\\
&=  \sqrt{r^2+s^2-2rs\cos(\varphi-\theta)}+\frac12r \cos(2\theta) - \frac12s \cos(2\varphi) 
  \end{alignat*}
since $\frac12\cos2\theta=\cos^2\theta-\frac12=\frac12-\sin^2\theta$.
We change variables again, and write $2\theta=\gamma-\delta$,
$2\varphi=\gamma+\delta$.  
Then 
\begin{alignat*}1
  2\zeta&= r \cos(\gamma-\delta) - s \cos(\gamma+\delta) +
  2\sqrt{r^2+s^2-2rs\cos\delta}\,.
\end{alignat*}
With $\cos(\gamma-\delta)=\cos\gamma\cos\delta+\sin\gamma\sin\delta$ we obtain
\begin{alignat*}1
  2\zeta&= (r-s)\cos\gamma\cos\delta
+(r+s)\sin\gamma\sin\delta+
   2 \sqrt{r^2+s^2-2rs\cos\delta}\,.
\end{alignat*}
The first two terms are the scalar product of $(\cos\gamma,\sin\gamma)$ with
another vector, which  is bounded by the length of the vector. Therefore
\begin{alignat*}1
  2\zeta&\ge
   2 \sqrt{r^2+s^2-2rs\cos\delta}
- \sqrt{(r-s)^2\cos^2\delta
+(r+s)^2\sin^2\delta}\\
&=   2 \sqrt{r^2+s^2-2rs\cos\delta}
- \sqrt{(r+s)^2-4rs\cos^2\delta}\,.
\end{alignat*}
Squaring, the last expression is nonnonegative iff
\begin{alignat*}1
4r^2+4s^2-8rs\cos\delta
\ge (r+s)^2-4rs\cos^2\delta\,,
\end{alignat*}
which in turn is equivalent to
\begin{alignat*}1
3r^2+3s^2-2rs+4rs(\cos^2\delta-2\cos\delta)
\ge 0\,,
\end{alignat*}
which is true since $x^2-2x\ge -1$ and $r^2+s^2\ge 2rs$.
\end{proof}
In closing, we remark that the relaxation for other values of $b$ is more
complex and includes other microstructures. To see this, we 
 define $\psi^*$ by
\begin{equation}
  \label{eq:psicsa}
  \psi^*(b,t)=\min\left\{ \sum_{i=1}^N 
  \bar\psi(z^i,t): N\in\N, z^i\in \{-1,0,1\}^2, 
  \sum_{i=1}^N z^i=b\right\}.
\end{equation}
The values of $\bar\psi$ entering this expression are characterized in Lemma \ref{lemmabbetaeu} and Lemma \ref{lemmabarpsi11}. The function $\psi^*$ is by definition  subadditive in $b$, existence of the minimum follows from growth and continuity.
We now  show that
a sequence $\{z^1,\dots, z^N\}$ which contains
 a pair $(z,z')$ with $z_1=-z'_1=1$ cannot be optimal. 
If $z+z'=0$, it suffices to remove both of them. If $z+z'=\pm e_2$,
replacing the pair by $\pm e_2$ reduces the energy, since
$\bar\psi(e_2)\le \bar\psi(e_1)+\bar\psi(e_1\pm e_2)$. 
If $z+z'=\pm 2e_2$ then replacing the pair with $(\pm e_2,\pm e_2)$ reduces the energy, 
since $2\bar\psi(e_2)\le \bar\psi(e_1+e_2)+\bar\psi(e_1-e_2)$.
Therefore the sign of all $z_1^i$ is the same. Analogously for the $z_2^i$, and
one concludes that 
\begin{alignat*}1
  \label{eq:psicsab}
  \psi^*(b,t)=&\min\{ |b_1|,|b_2|\} \bar\psi(e_1 + \sgn(b_1b_2) e_2 ,t)\\
&  + (|b_2|-|b_1|)_+ \psi(e_2,t)
  + (|b_1|-|b_2|)_+ \psi(e_1,t).
\end{alignat*}
This expression is clearly convex in $t$. Finally, we show that $\psi^*\le\psi$. This is immediate if $|b|\le \sqrt2$, and 
follows from quadratic growth of $\psi$ for larger $b$. In particular, if 
$|b_1|$ and $|b_2|$ are not 1 then from $\psi(e_1,t)\le 2$ we obtain
$\psi^*(b,t)\le 2|b_1|+2|b_2|\le b_1^2+b_2^2 \le \psi(b,t)$. 
If $|b_1|=1$ and $|b_2|\ge 3$, a similar computation holds since
$2|b_1| +2|b_2|\le 1+3|b_2|\le |b|^2$. It remains to deal with the case
$b=(1,2)$ (up to signs and permutations). In this case, from $\eta |2t_1t_2|\le |t|^2=1$ we obtain  
\begin{equation*}
 \psi^*((1,2),t) \le 3 + \eta(t_1^2+2t_2^2) 
 \le 5 + \eta (t_1^2+4t_2^2+4t_1t_2)=\psi((1,2),t)\,.
\end{equation*}
Therefore $\psi^*\le \psi$. We conclude that $\psi^*$ is the convex subadditive envelope of $\psi$. 

In Figure \ref{fig:example} we investigate the case
$b=(2,1)$ in more detail. The lower bound (\ref{eqlbtalpha}) 
is (numerically) attained by a
microstructure in which $\alpha=(1,1)$, $\alpha=(1,0)$ and $\alpha=(0,1)$ play
a role, and is smaller than $\psi^*$. Therefore in this case $\bar\psi<\psi^*$.

\section*{Acknowledgements}
We thank Giovanni Alberti and Camillo De Lellis for fruitful discussions and useful suggestions. 
This work was partially supported by the Deutsche Forschungsgemeinschaft
through the Sonderforschungsbereich 1060 
{\em ``The mathematics of emergent effects''}, project A5.

\providecommand{\bysame}{\leavevmode\hbox to3em{\hrulefill}\thinspace}
\providecommand{\MR}{\relax\ifhmode\unskip\space\fi MR }
\providecommand{\MRhref}[2]{%
  \href{http://www.ams.org/mathscinet-getitem?mr=#1}{#2}
}
\providecommand{\href}[2]{#2}

\end{document}